\newtheorem{theorem}{Theorem}[section]
\newtheorem{proposition}[theorem]{Proposition}
\newtheorem{corollary}[theorem]{Corollary}
\newtheorem{lemma}[theorem]{Lemma}
\newtheorem{remark}[theorem]{Remark}
\theoremstyle{definition}
\newcommand{\comment}[1]{}
\numberwithin{equation}{section}
\newcommand{\epf}{ $\Box$\medskip}
\theoremstyle{definition}
\begin{document}
\title[J.M. Tanoh Dje and Beno\^it. F.  Sehba]{Atomic decomposition of Bergman-Orlicz space on the upper complex half-plane}
\author[J.M. Tanoh Dje and Beno\^it. F.  Sehba]{Jean$-$Marcel Tanoh Dje and Beno\^it F. Sehba}
\address{Unit\'e de Recherche et d'Expertise Num\'erique, Universit\'e Virtuelle de C\^ote d'Ivoire, Cocody II-Plateaux - 28 BP 536 ABIDJAN 28}
\email{{\tt tanoh.dje@uvci.edu.ci}}
\address{Department of Mathematics, University of Ghana,  P.O. Box L.G 62 Legon, Accra, Ghana.}
\email{{\tt bfsehba@ug.edu.gh}}

\subjclass{}
\keywords{}

\date{}

\begin{abstract}
In this work, we propose an atomic decomposition of the Bergman-Orlicz spaces on the complex upper half-plane. Using this result, we characterize Carleson embeddings with loss between Bergman-Orlicz spaces and certain Orlicz spaces. We also leverage this last result to control the composition operator between two Bergman-Orlicz spaces.
\end{abstract}

\maketitle

\section{Introduction and results.}

In this paper, a function $\Phi : [0,\infty) \rightarrow [0,\infty)$ is called an growth function if it is nondecreasing, $\lim_{t \to 0} \Phi(t) =\Phi(0)= 0$, $\Phi(t) > 0$
for $t \in (0,\infty)$ and $\lim_{t \to \infty} \Phi(t) = \infty$.  The growth function  $\Phi$ is said to be of upper type (resp. lower type) if there exists  $p \in (0,\infty)$ and a constant $C>1$ such that for all $t \in  [1,\infty)$ (resp. $t \in  [0,1]$) and $s \in  [0,\infty)$,
\begin{equation}\label{eq:sui8n}
\Phi(st)\leq Ct^{p}\Phi(s).\end{equation}
We denote by $\mathscr{U}^{p}$ (resp. $\mathscr{L}_{p}$) the set of all growth functions of upper-type $p \geq 1$ (resp. lower-type $0< p\leq 1$) such that the function 
$t\mapsto \frac{\Phi(t)}{t}$ is non decreasing (resp. non-increasing) on $(0,\infty)$. We put
$   \mathscr{U}:=\bigcup_{p\geq 1}\mathscr{U}^{p}$ (resp. $\mathscr{L}:=\bigcup_{0< p\leq 1}\mathscr{L}_{p}$).

\medskip

Let  $\Phi$ be a growth function. We say that  $\Phi$ satisfies the $\Delta_{2}-$condition (or $\Phi \in \Delta_{2}$) if there exists a constant
$K > 1$ such that
\begin{equation}\label{eq:delta2}
\Phi(2t) \leq K \Phi(t),~ \forall~ t > 0.\end{equation}
When $\Phi$ is convex, we say that $\Phi$ satisfies $\nabla_{2}-$condition (or $\Phi \in \nabla_{2}$)  if $\Phi$ and its complementary function both satisfy $\Delta_{2}-$condition. Note that the complementary function of $\Phi$ is the function $\Psi$ defined by
$$ \Psi(s)=\sup_{t\geq 0}\{st-\Phi(t) \}, ~ \forall~  s \geq 0.       $$

\medskip

We say that two growth functions $\Phi_{1}$ and $\Phi_{2}$ are equivalent (or $\Phi_{1} \sim \Phi_{2}$), if there exists a constant $c > 0$ such that
\begin{equation}\label{eq:equivalent}
c^{-1}\Phi_{1}(c^{-1}t) \leq \Phi_{2}(t)\leq c\Phi_{1}(ct), ~~ \forall~ t > 0.\end{equation} 
We will assume in the sequel that  any element $\Phi$ of $\mathscr{U}$ (resp. $\mathscr{L}$) belongs to $\mathscr{C}^{1}([0,\infty))$ and is  
convex (resp. concave),
(see for example \cite{bonamisehba,  djesehb, djesehaqb, sehbaedgc}).

\medskip

Let $(X, \sum, \mu)$ be a measure space and $\Phi$ a  growth function  of lower type. The Orlicz space on  $X$, $L^{\Phi}(X, d\mu)$ is the space of all equivalent
classes (in the usual sense) of measurable functions $f : X \longrightarrow \mathbb{C}$ which satisfy
$$ \|f\|_{L_{\mu}^{\Phi}}^{lux}:=\inf\left\{\lambda>0 : \int_{X}\Phi\left(\dfrac{| f(x)|}{\lambda}\right)d\mu(x) \leq 1  \right\}< \infty. $$
The space  $L^{\Phi}$  generalizes the Lebesgue space $L^{p}$  for $0< p < \infty$  (see \cite{shuchen, kokokrbec, raoren}).

\medskip

Let  $\mathbb{C}_{+}:=\left\{ z=x+iy \in \mathbb{C} :   y > 0 \right\}$ be the upper complex half$-$plane. Let  $\alpha > -1$ be a real and $\Phi$ a  growth function of lower-type. 
\begin{itemize}
\item[\textbullet] The Bergman$-$Orlicz space on  $\mathbb{C_{+}}$, $A_{\alpha}^{\Phi}(\mathbb{C_{+}})$ is the subspace of  $L^{\Phi}(\mathbb{C_{+}}, dV_{\alpha})$ constituted of analytic functions on $\mathbb{C_{+}}$, here $dV_\alpha(z):=(\Im mz)^\alpha dV(z)$, and $dV(z):=dxdy$ if $z=x+iy$. For $F \in A^{\Phi}_{\alpha}(\mathbb{C_{+}})$, we will simply use the notation $\|F\|_{A^{\Phi}_{\alpha}}^{lux}:= \|F\|_{L_{V_{\alpha}}^{\Phi}}^{lux}$.
\item[\textbullet] The Orlicz space of sequences on  $\mathbb{C}_{+}$,  $\ell_{\alpha}^{\Phi}(\mathbb{C}_{+})$ is the space of all sequences of complex numbers   $\mu=\{\mu_{l,j}\}_{l,j\in \mathbb{Z}}$ on  $\mathbb{C_{+}}$ such that 
$$  \|\mu\|_{\ell^{\Phi}_{\alpha}}^{lux}:= \inf\left\{\lambda>0 : \sum_{j\in \mathbb{Z}}\sum_{l\in \mathbb{Z}}\Phi\left(\dfrac{| \mu_{l,j}|}{\lambda}\right)2^{j\gamma (\alpha +2)}  \leq 1  \right\}< \infty   $$
where   $\gamma$ is a real number such that
 \begin{equation}\label{eq:uaa5aqn}
 \frac{\log\left( \frac{1+\delta^{2}/20}{1-\delta^{2}/20}  \right)}{4 \log 2}< \gamma  < \frac{\log\left( \frac{1+\delta^{2}/4}{1-\delta^{2}/4}  \right)}{4 \log 2},
\end{equation}
with   $0<\delta< 1$.
\end{itemize}
The space $A_{\alpha}^{\Phi}(\mathbb{C_{+}})$ (resp. $\ell_{\alpha}^{\Phi}(\mathbb{C_{+}})$) generalizes  the Bergman space $A_{\alpha}^{p}(\mathbb{C_{+}})$ (resp.  sequence space $\ell_{\alpha}^{p}(\mathbb{C_{+}})$), for $0< p < \infty$ that correspond to the case $\Phi(t)=t^p$. Note that for $F$ to be in the Bergman space $A_{\alpha}^{p}(\mathbb{C_{+}})$, we should have that
$$\|F\|_{\alpha,p}:=\left(\int_{\mathbb{C}_+}\vert F(z)\vert^pdV_\alpha(z)\right)^{\frac 1p}<\infty.$$

\medskip

For $0<\delta< 1$, we call $\delta-$lattice, the sequence of complex numbers $\{z_{l,j}\}_{l,j\in \mathbb{Z}}$ defined by   
\begin{equation}\label{eq:ua5n}
z_{l,j}:= \delta^{2}l2^{\gamma j-3}+i2^{\gamma j},~~ \forall~l,j\in \mathbb{Z},
\end{equation}
where $\gamma$ is the real defined in the Relation (\ref{eq:uaa5aqn}). More on this sequence will be said in Section 3 (see also \cite{gonessa}).
\vskip .2cm
Our result on the atomic decomposition of the Bergman-Orlicz spaces is as follows.

\begin{theorem}\label{pro:mainqaaq6}
Let $\alpha>-1$,  $0<\delta<1$ and  $\Phi \in \mathscr{U} \cap \nabla_{2}$. The following assertions are satisfied.
\begin{itemize}
\item[i)] For   $\mu:=\{\mu_{l,j}\}_{l,j\in \mathbb{Z}} \in \ell^{\Phi}_{\alpha}(\mathbb{C_{+}})$, the function defined by
$$  F_{\mu}(z) =c_{\alpha}\sum_{j\in \mathbb{Z}}\sum_{l\in \mathbb{Z}}\mu_{l,j}\left(  \frac{z-\overline{z_{l,j}}}{i} \right)^{-\alpha-2}2^{j\gamma (\alpha +2)},~~ \forall~z \in \mathbb{C}_{+},
   $$
belongs to  $A_{\alpha}^{\Phi}(\mathbb{C}_{+})$, where $c_{\alpha}$ is a constant which depends only on $\alpha$. Moreover,  there exists a constant $C_{1}:= C_{\Phi,\delta,\alpha}>0$ such that  
\begin{equation}\label{eq:ineagay}
\|F_{\mu}\|_{A^{\Phi}_{\alpha}}^{lux}\leq C_{1} \left\|\mu\right\|_{\ell^{\Phi}_{\alpha}}^{lux}.
 \end{equation}
\item[ii)] For  $F\in A_{\alpha}^{\Phi}(\mathbb{C}_{+})$and for $\delta$  small enough,  there exists a 
 sequence $\mu:=\{\mu_{l,j}\}_{l,j\in \mathbb{Z}}$ belonging to $\ell^{\Phi}_{\alpha}(\mathbb{C_{+}})$ such that 
$$  F(z) =c_{\alpha}\sum_{j\in \mathbb{Z}}\sum_{l\in \mathbb{Z}}\mu_{l,j}\left(  \frac{z-\overline{z_{l,j}}}{i} \right)^{-\alpha-2}2^{j\gamma (\alpha +2)},~~ \forall~z \in \mathbb{C}_{+},
   $$
where $c_{\alpha}$ is a constant which depends only on $\alpha$.  Moreover,   there exists a constant $C_{2}:= C_{\Phi,\delta,\alpha}>0$ such that
\begin{equation}\label{eq:inaqaqay}
\|F\|_{A^{\Phi}_{\alpha}}^{lux} \leq C_{2} \left\|\mu\right\|_{\ell^{\Phi}_{\alpha}}^{lux}.
 \end{equation}
\end{itemize}
\end{theorem}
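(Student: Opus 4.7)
The strategy is the classical synthesis--analysis scheme, adapted to the Orlicz setting. Part (i) reduces to a modular estimate that relies on the $L^\Phi(\mathbb{C}_+,dV_\alpha)$-boundedness of a positive Bergman operator; part (ii) follows by discretising the weighted Bergman reproducing formula along $\{z_{l,j}\}$ and inverting a small error via a Neumann series in $A^\Phi_\alpha(\mathbb{C}_+)$.

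For (i), let $Q_{l,j}$ denote the Carleson-type cell of the $\delta$-lattice containing $z_{l,j}$: by the choice of $\gamma$ in \eqref{eq:uaa5aqn} the $\{Q_{l,j}\}$ tile $\mathbb{C}_+$ and satisfy $V_\alpha(Q_{l,j})\asymp 2^{j\gamma(\alpha+2)}$. For every $w\in Q_{l,j}$ one has the uniform estimate
\begin{equation*}
\left|\left(\tfrac{z-\overline{z_{l,j}}}{i}\right)^{-\alpha-2}\right|\asymp \left|\left(\tfrac{z-\bar w}{i}\right)^{-\alpha-2}\right|,
\end{equation*}
so that $|F_\mu(z)|\lesssim P_\alpha^+[g_\mu](z)$, where $g_\mu := \sum_{l,j}|\mu_{l,j}|\chi_{Q_{l,j}}$ and $P_\alpha^+$ is the positive Bergman operator $P_\alpha^+ f(z):=c_\alpha\int_{\mathbb{C}_+} f(w)\,|(z-\bar w)/i|^{-\alpha-2}\,dV_\alpha(w)$. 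The bound \eqref{eq:ineagay} then follows from the known $L^\Phi(dV_\alpha)$-boundedness of $P_\alpha^+$ under $\Phi\in\mathscr{U}\cap\nabla_2$ (an Orlicz Schur test) together with the obvious modular equivalence $\|g_\mu\|_{L^\Phi_{V_\alpha}}^{lux}\asymp\|\mu\|_{\ell^\Phi_\alpha}^{lux}$. A preliminary compactness argument shows that the series defining $F_\mu$ converges absolutely and uniformly on compacta, and hence $F_\mu\in\mathrm{Hol}(\mathbb{C}_+)$.

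For (ii), starting from the reproducing formula
\begin{equation*}
F(z)=c_\alpha\int_{\mathbb{C}_+} F(w)\left(\tfrac{z-\bar w}{i}\right)^{-\alpha-2}\,dV_\alpha(w),
\end{equation*}
valid on $A^\Phi_\alpha$ since the lower-type assumption yields a local $L^1$ embedding, partition the integral as $\sum_{l,j}\int_{Q_{l,j}}\cdots$ and freeze $w$ at $z_{l,j}$ on each $Q_{l,j}$. This produces $F=\widetilde F+T_\delta F$ with
\begin{equation*}
\widetilde F(z):=c_\alpha\sum_{l,j}F(z_{l,j})\,V_\alpha(Q_{l,j})\left(\tfrac{z-\overline{z_{l,j}}}{i}\right)^{-\alpha-2}
\end{equation*}
and $T_\delta F$ the approximation error. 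Since $Q_{l,j}$ has hyperbolic diameter $O(\delta)$ and both $F$ and the kernel are hyperbolic-Lipschitz, a standard mean-value/gradient estimate on Bergman balls gives $|T_\delta F(z)|\lesssim \delta\,P_\alpha^+[|F|](z)$, and hence $\|T_\delta F\|_{A^\Phi_\alpha}^{lux}\leq C\delta\,\|F\|_{A^\Phi_\alpha}^{lux}$. Choosing $\delta$ small enough that $C\delta<1/2$, $I-T_\delta$ is invertible on $A^\Phi_\alpha$; writing $F_0:=(I-T_\delta)^{-1}F$ one obtains $\widetilde{F_0}=F$, i.e.\ the desired atomic representation with $\mu_{l,j}:=F_0(z_{l,j})\,V_\alpha(Q_{l,j})\,2^{-j\gamma(\alpha+2)}$. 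Finally, $\|\mu\|_{\ell^\Phi_\alpha}^{lux}\lesssim\|F_0\|_{A^\Phi_\alpha}^{lux}\lesssim\|F\|_{A^\Phi_\alpha}^{lux}$ follows from the sub-mean-value inequality $|F_0(z_{l,j})|\,V_\alpha(Q_{l,j})^{1/q}\lesssim \bigl(\int_{Q_{l,j}}|F_0|^q\,dV_\alpha\bigr)^{1/q}$ (for any $q\geq 1$), passed through the Orlicz norm using convexity and $\nabla_2$.

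The principal obstacle I anticipate is the quantitative $\delta$-gain $\|T_\delta F\|^{lux}\leq C\delta\|F\|^{lux}$ in the Luxemburg norm, and not merely in $L^p$: one must convert pointwise hyperbolic-Lipschitz estimates into a modular inequality uniform in $F$, exploiting both $\Phi\in\nabla_2$ and the monotonicity of $t\mapsto \Phi(t)/t$ to prevent constants from accumulating. A secondary technical step is the $L^\Phi(dV_\alpha)$-boundedness of $P_\alpha^+$, which is known for $\Phi\in\mathscr{U}\cap\nabla_2$ from the works cited in the introduction.
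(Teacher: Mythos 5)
Your overall scheme is sound, but it is genuinely different from the paper's, so a comparison is worthwhile. For part (i) you dominate $|F_\mu|$ pointwise by $\mathcal{P}_\alpha^+$ applied to the step function $g_\mu=\sum_{l,j}|\mu_{l,j}|\chi_{Q_{l,j}}$ and invoke the $L^\Phi(dV_\alpha)$-boundedness of $\mathcal{P}_\alpha^+$ (the paper's Theorem \ref{pro:mainplaqaaqaaqq6}); the paper instead dualizes, pairing $\sum_{l,j}|F_{l,j}|$ against $G\in L^{\Psi}(dV_\alpha)$ and running Young's inequality together with the boundedness of $\mathcal{P}_\alpha^+$ on $L^{\Psi}$. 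Both arguments rest on the same operator bound, and your route is if anything more direct, provided you verify the kernel comparison $|K_\alpha(z,w)|\sim|K_\alpha(z,z_{l,j})|$ uniformly for $w\in Q_{l,j}$ (true, since the cells have bounded hyperbolic diameter) and the locally uniform convergence needed for holomorphy. For part (ii) the divergence is more substantial: you discretize the reproducing formula and invert $I-T_\delta$ by a Neumann series, which requires the operator-norm smallness $\Vert T_\delta F\Vert_{A_\alpha^{\Phi}}^{lux}\leq C\delta\Vert F\Vert_{A_\alpha^{\Phi}}^{lux}$ that you correctly identify as the crux. The paper avoids estimating $T_\delta$ altogether: it proves a sampling inequality (Lemma \ref{pro:mainqaqa6}), namely $\Vert F\Vert_{A_\alpha^{\Phi}}^{lux}\lesssim\Vert\{F(z_{l,j})\}\Vert_{\ell_\alpha^{\Phi}}^{lux}$ for $\delta$ small, by showing at the modular level that the oscillation term $\sum_{l,j}\int_{J_j}\int_{I_{l,j}}\Phi(|F(x+iy)-F(z_{l,j})|)y^\alpha dxdy$ is at most $C<1$ times $\int\Phi(|F|)dV_\alpha$ (using Proposition \ref{pro:main1q8}(ii) and the monotonicity of $y\mapsto\Vert F(\cdot+iy)\Vert_{L^{\Phi}}$), and then deduces surjectivity of the synthesis operator $T$ from the fact that its adjoint $T^{*}:A_\alpha^{\Psi}\to\ell_\alpha^{\Psi}$, the sampling map, is bounded below — this in turn uses the duality identifications $(A_\alpha^{\Phi})^{*}\cong A_\alpha^{\Psi}$ and $(\ell_\alpha^{\Phi})^{*}\cong\ell_\alpha^{\Psi}$. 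Your approach buys an explicit, constructive coefficient map $(I-T_\delta)^{-1}$; the paper's buys a cleaner small-perturbation estimate (done once, on a scalar modular quantity rather than on an operator in the Luxemburg norm) at the price of a non-constructive functional-analytic surjectivity step. To complete your route you would still need to carry out the hyperbolic-Lipschitz error estimate in the Orlicz modular — essentially the same computation as the paper's bound on the term $II$ — and to justify the reproducing formula on $A_\alpha^{\Phi}$ by density of $A_\alpha^{2}\cap A_\alpha^{\Phi}$ (the paper's Lemmas \ref{pro:main11aqaaqm8} and \ref{pro:mainplaqzepama6}) rather than by the vaguer ``local $L^1$ embedding'' you cite.
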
 

Atomic decomposition of function spaces is one of the most studied problems in different areas of mathematical analysis. For classical Bergman spaces, we refer the reader to \cite{gonessa,zhu} and the references therein. These atomic decompositions have found applications in various questions in analytic function spaces and their operators (see for example \cite{pauzhao,tchoundja}). For Bergman-Orlicz spaces of the unit ball of $\mathbb{C}^n$, an atomic decomposition was recently obtained in \cite{bekollebonamitchoundja} with some applications to weak factorization of some of these spaces. In \cite{sehba4}, using the atomic decomposition from \cite{bekollebonamitchoundja}, the second author of the present paper obtained a characterization of Carleson embedding with loss for the Bergman-Orlicz spaces of the unit ball of $\mathbb{C}^n$. In the same vein, as application of the above result, we prove a Carleson embedding with loss for Bergman-Orlicz spaces of the upper half-plane. For more on Carlseon embeddings in various domains and their history, we refer the reader to \cite{Carleson1,Carleson2,CharpentierSehba,CW,djesehb,Pduren2,Hastings,Luecking1,Luecking2,Luecking3,Luecking4,Power,sehba4,sehba5,Ueki,Videnskii} and references therein.
\vskip .2cm

In previous works \cite{djesehb, djesehafeuto}, we provided necessary and sufficient conditions for a Bergman-Orlicz space $A_{\alpha}^{\Phi_{1}}$  to be continuously embedded into the Orlicz space $L^{\Phi_{2}}(\mathbb{C}_{+}, d\mu)$ (Carleson embedding),
 under the condition that the function   $t\mapsto \frac{\Phi_{2}(t)}{\Phi_{1}(t)}$  is increasing on  $(0, \infty)$.  When 
 $\Phi_{1}(t)=t^{p}$ and $\Phi_{2}(t)=t^{q}$, with $0<p, q<\infty$, the condition that  $t\mapsto \frac{\Phi_{2}(t)}{\Phi_{1}(t)}$ is increasing is equivalent to  $p\leq q$. We notice that the generalization of the case $q<p$ has not been addressed. In this work, we use the above atomic decomposition of Bergman-Orlicz spaces to investigate this open case. More precisely, we formulate necessary and sufficient conditions for the Bergman-Orlicz space $A_{\alpha}^{\Phi_{1}}$ to be continuously embedded into an Orlicz space  $L^{\Phi_{2}}(\mathbb{C}_{+}, d\mu)$, this time under the condition that   $t\mapsto \frac{\Phi_{1}(t)}{\Phi_{2}(t)}$  is increasing on  $(0, \infty)$ (Carleson embedding with loss). 

\vskip .2cm

Let  $\mu$ be a positive measure on $\mathbb{C}_{+}$. The Berezin transform $\widetilde{\mu}$ of  $\mu$ is the function defined by
$$  \widetilde{\mu}(z)= \int_{\mathbb{C}_{+}}\frac{\mathrm{Im}( z)^{2+\alpha}}{| \omega-\overline{z}|^{2(2+\alpha)}}d\mu(\omega), ~~ \forall~z \in \mathbb{C}_{+}.
   $$

Let $(X, \|.\|_{X})$ and $(Y, \|.\|_{Y})$ be two quasi-normed vector spaces. We say that $X$ is continuously embedded into $Y$, if there exists a constant  $C>0$ such that for all $f \in X$, 
\begin{equation}\label{eq:inaaaqay}
\|f\|_{Y} \leq C\|f\|_{X}.
 \end{equation}

Here is our result on Carleson embedding with loss for Bergman-Orlicz spaces.
\begin{theorem}\label{pro:mainqaaqaq6}
Let $\alpha>-1$,  $\Phi_{1} \in  \mathscr{U}\cap \nabla_{2}$ and $\Phi_{2} \in \mathscr{L} \cup \mathscr{U}$ and,    $\mu$ a positive measure on $\mathbb{C}_{+}$. Suppose that $t\mapsto \frac{\Phi_{1}(t)}{\Phi_{2}(t)}$ is non-decreasing on $(0, \infty)$ and
\begin{equation}\label{eq:inaaaqaay}
\int_{0}^{t}\frac{\Phi_{1}\circ\Phi_{2}^{-1}(s)}{s^{2}}ds\leq C\frac{\Phi_{1}\circ\Phi_{2}^{-1}(t)}{t}, ~~\forall ~t>0, 
 \end{equation}
where $C$ is a constant that depends only on $\Phi_{1}$ and $\Phi_{2}$. The following assertions are equivalent.
\begin{itemize}
\item[i)]  The Bergman$-$Orlicz space $A_{\alpha}^{\Phi_{1}}(\mathbb{C}_{+})$ continuously embeds into  $L^{\Phi_{2}}(\mathbb{C}_{+}, d\mu)$. 
 \item[ii)] The Berezin transform $\widetilde{\mu}$ belongs to $L^{\Phi_{3}}(\mathbb{C}_{+}, dV_{\alpha})$, where $\Phi_{3}$ is the complementary function of $\Phi_{1}\circ\Phi_{2}^{-1}$.
\end{itemize}
\end{theorem}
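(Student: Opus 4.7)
The plan is to reduce both directions of the equivalence to a sequence-space statement via the atomic decomposition of Theorem \ref{pro:mainqaaq6}, and then exploit Young's inequality for Orlicz spaces with the complementary pair $\Psi := \Phi_1\circ\Phi_2^{-1}$ and $\Phi_3$. Hypothesis (\ref{eq:inaaaqaay}) is precisely what forces $\Psi$ to be equivalent to a convex function of lower type strictly greater than $1$, so that the Young inequality $st\leq \Psi(s)+\Phi_3(t)$ and the modular-to-Luxemburg conversions are uniformly available. The overall scheme is the $\mathbb{C}_+$ analogue of the approach used on the unit ball in \cite{sehba4}.

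For the implication $(ii)\Rightarrow(i)$, I would fix $F\in A^{\Phi_1}_\alpha(\mathbb{C}_+)$ with $\|F\|^{lux}_{A^{\Phi_1}_\alpha}\leq 1$, decompose it as $F(z) = c_\alpha\sum_{l,j}\mu_{l,j}\bigl(\tfrac{z-\overline{z_{l,j}}}{i}\bigr)^{-\alpha-2}2^{j\gamma(\alpha+2)}$ via Theorem \ref{pro:mainqaaq6}(ii) with $\|\mu\|^{lux}_{\ell^{\Phi_1}_\alpha}\lesssim 1$, and partition $\mathbb{C}_+$ into Bergman-type quasi-boxes $B_{l,j}$ centered at $z_{l,j}$ with $V_\alpha(B_{l,j})\sim 2^{j\gamma(\alpha+2)}$. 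Subharmonicity of $|F|$ on each $B_{l,j}$, combined with a Schur-type estimate absorbing the off-diagonal tails of the atomic series (using kernel decay and convexity of $\Phi_2$), gives after normalising $\lambda$,
\begin{equation*}
\int_{\mathbb{C}_+}\Phi_2\!\left(\frac{|F(z)|}{\lambda}\right)d\mu(z)\;\lesssim\; \sum_{l,j}\Phi_2\!\left(\frac{|\mu_{l,j}|}{\lambda}\right)\mu(B_{l,j}).
\end{equation*}
Writing $b_{l,j}:=\mu(B_{l,j})\,2^{-j\gamma(\alpha+2)}$ and applying Young's inequality with $s=\Phi_2(|\mu_{l,j}|/\lambda)$ and $t=b_{l,j}$, the right-hand side is bounded by $\sum\Phi_1(|\mu_{l,j}|/\lambda)\,2^{j\gamma(\alpha+2)} + \sum\Phi_3(b_{l,j})\,2^{j\gamma(\alpha+2)}$; the first sum is $\leq 1$ as soon as $\lambda\simeq \|\mu\|^{lux}_{\ell^{\Phi_1}_\alpha}$, and the second is controlled by $\|\widetilde{\mu}\|^{lux}_{L^{\Phi_3}(dV_\alpha)}$ via the pointwise lower bound $\widetilde{\mu}(z)\gtrsim b_{l,j}$ for $z$ in a fixed dilate of $B_{l,j}$, together with the near-constancy of $\widetilde{\mu}$ on Bergman balls.

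For the converse $(i)\Rightarrow(ii)$, I would test the embedding against atomic sums: given a finitely supported $\sigma=\{\sigma_{l,j}\}$, Theorem \ref{pro:mainqaaq6}(i) gives $F_\sigma\in A^{\Phi_1}_\alpha$ with $\|F_\sigma\|^{lux}_{A^{\Phi_1}_\alpha}\lesssim \|\sigma\|^{lux}_{\ell^{\Phi_1}_\alpha}$, so hypothesis (i) yields $\int\Phi_2(|F_\sigma|/\lambda)\,d\mu\leq 1$ for $\lambda\simeq\|\sigma\|^{lux}_{\ell^{\Phi_1}_\alpha}$. A Rademacher sign randomisation across the lattice, averaged over the signs, eliminates cross-terms in expectation and leaves the diagonal piece $|\sigma_{l,j}|$ essentially constant on $B_{l,j}$, producing the dual modular estimate $\sum\Phi_2(|\sigma_{l,j}|/\lambda)\,\mu(B_{l,j})\lesssim 1$. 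Choosing $\sigma$ optimally via Young duality converts this into $\sum\Phi_3(b_{l,j})\,2^{j\gamma(\alpha+2)}\lesssim 1$, which, by near-constancy of $\widetilde{\mu}$ on quasi-boxes and the size estimate $V_\alpha(B_{l,j})\sim 2^{j\gamma(\alpha+2)}$, upgrades to $\widetilde{\mu}\in L^{\Phi_3}(dV_\alpha)$.

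The main obstacle will be the off-diagonal analysis in both directions: in $(ii)\Rightarrow(i)$, executing a genuinely $\Phi_2$-nonlinear Schur estimate to dominate the full integral by the diagonal discrete modular sum; in $(i)\Rightarrow(ii)$, making rigorous a Khinchine-type randomisation valid uniformly across $\Phi_2\in\mathscr{L}\cup\mathscr{U}$. Condition (\ref{eq:inaaaqaay}) is the crucial hypothesis: it forces $\Psi$ and its complement $\Phi_3$ into $\Delta_2$, so that all Luxemburg/modular comparisons behave uniformly and Young's inequality can be reversed up to constants.
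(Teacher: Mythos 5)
Your direction $(i)\Rightarrow(ii)$ follows the paper's route essentially verbatim: build randomized atomic sums from a unit-norm sequence in $\ell_\alpha^{\Phi_1}$ obtained by setting $\lambda_{l,j}=\Phi_2^{-1}(|\nu_{l,j}|)$ for $\nu$ in the unit ball of $\ell_\alpha^{\Phi_1\circ\Phi_2^{-1}}$, apply the embedding plus the Orlicz Khintchine inequality (Lemma \ref{pro:main2qapAaa}) to get $\int_{\mathbb{C}_+}\Phi_2\bigl((\sum|\lambda_{l,j}|^2|F_{l,j}|^2)^{1/2}\bigr)d\mu\lesssim 1$, compare this pointwise from below with $\sum\Phi_2(|\lambda_{l,j}|)\chi_{\mathcal{D}_\delta(z_{l,j})}$, and conclude by the duality $(\ell_\alpha^{\Phi_1\circ\Phi_2^{-1}})^*\simeq\ell_\alpha^{\Phi_3}$ that $\{\widehat{\mu}_\delta(z_{l,j})\}\in\ell_\alpha^{\Phi_3}$, whence $\widetilde{\mu}\in L^{\Phi_3}(dV_\alpha)$ by Lemmas \ref{pro:main1qpapqp8} and \ref{pro:main1qppaq8}. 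The issues you flag there (uniform Khintchine, near-constancy on lattice disks) are exactly the ones the paper's preliminary lemmas resolve, so that half is sound in outline.

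The direction $(ii)\Rightarrow(i)$ is where you have a genuine gap, and it stems from choosing a harder route than necessary. Your entire argument funnels through the discretized estimate
\begin{equation*}
\int_{\mathbb{C}_+}\Phi_2\!\left(\frac{|F(z)|}{\lambda}\right)d\mu(z)\;\lesssim\;\sum_{l,j}\Phi_2\!\left(\frac{|\mu_{l,j}|}{\lambda}\right)\mu(B_{l,j}),
\end{equation*}
which you justify only by invoking ``a Schur-type estimate absorbing the off-diagonal tails'' and then defer as ``the main obstacle.'' That step is the whole difficulty: on $B_{l,j}$ the value $|F(z)|$ is not controlled by $|\mu_{l,j}|$ alone, and pushing a general $\Phi_2\in\mathscr{L}\cup\mathscr{U}$ (possibly concave, of arbitrarily small lower type) through the full atomic series and then summing the kernel tails against $\mu(B_{l,j})$ requires quantitative information on $\mu$ of exactly the Carleson type you are trying to prove; as written the argument is circular or at best incomplete. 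The paper avoids this entirely: it never uses the atomic decomposition for this direction. Instead it applies the submean value inequality of Proposition \ref{pro:main1q8} to $\Phi_2(|F|)$ directly, uses relation (\ref{eq:eseqs1}) and Fubini to convert $\int\Phi_2(|F|)\,d\mu$ into $\int\Phi_2(|F(\omega)|)\,\widehat{\mu}_{\delta'}(\omega)\,dV_\alpha(\omega)$, bounds $\widehat{\mu}_{\delta'}\lesssim\widetilde{\mu}$ by Proposition \ref{pro:main1qaqaqp8}, and finishes with the pointwise Young inequality $\Phi_2(|F|)\,\widetilde{\mu}\leq\Phi_1(|F|)+\Phi_3(\widetilde{\mu})$ in the continuous integral. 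Your final Young step on the discrete sum is the correct analogue, but to make your version of $(ii)\Rightarrow(i)$ complete you would either have to actually execute the nonlinear Schur test (nontrivial and not obviously true at this generality), or replace that whole block by the paper's continuous argument.
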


\begin{remark}
Condition (\ref{eq:inaaaqaay})  insures that the growth function $\Phi_3$ belongs to $\mathscr{U}$ and it is $\nabla_2$. 
\end{remark}

For $\phi : \mathbb{C}_{+}\longrightarrow \mathbb{C}_{+}$ holomorphic, the composition operator $C_{\phi}$ is the operator defined for any $F$ holomorphic function on $\mathbb{C}_{+}$ by
$$  C_{\phi}(F)(z)= F \circ \phi(z), ~~ \forall~z \in \mathbb{C}_{+}.    $$

Let $\phi$ be as above and $\beta >-1$, we define the measure $\mu_{\phi,\beta}$  by
\begin{equation}\label{eq:inpay}
\mu_{\phi,\beta}(E):= |\phi^{-1}(E)|_{\beta}, 
\end{equation}
for any Borel set $E$ contained in $\mathbb{C}_{+}$.

Let $(X, \|.\|_{X})$ and $(Y, \|.\|_{Y})$ be two quasi-normed vector spaces. We say that a operator $T$ is bounded from $X$ to $Y$ if there exists a constant $C>0$ such that for all $f \in X$, 
\begin{equation}\label{eq:fo4naqAQ1l}
\|T(f)\|_{X}\leq C\|f\|_{Y}.
\end{equation}

The following follows directly from Theorem \ref{pro:mainqaaqaq6}.
\begin{corollary}\label{pro:mainqaq6}
Let $\alpha, \beta>-1$,   $\Phi_{1} \in  \mathscr{U}\cap \nabla_{2}$ and $\Phi_{2} \in \mathscr{L} \cup \mathscr{U}$ and, let $\phi : \mathbb{C}_{+}\longrightarrow \mathbb{C}_{+}$ be a holomorphic function. Suppose that $t\mapsto \frac{\Phi_{1}(t)}{\Phi_{2}(t)}$ is non-decreasing on $(0, \infty)$ and Relation (\ref{eq:inaaaqaay})  is satisfied. The following assertions are equivalent.
\begin{itemize}
\item[i)] The composition operator $C_{\phi}: A_{\alpha}^{\Phi_{1}}(\mathbb{C}_{+})\longrightarrow A_{\beta}^{\Phi_{2}}(\mathbb{C}_{+}) $ is bounded;
\item[ii)] The Berezin transform $\widetilde{\mu_{\phi,\beta}}$ belongs to $L^{\Phi_{3}}(\mathbb{C}_{+}, dV_{\alpha})$, where $\Phi_{3}$ is  the complementary function of $\Phi_{1}\circ\Phi_{2}^{-1}$.
\end{itemize}
\end{corollary}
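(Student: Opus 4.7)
The plan is to deduce Corollary \ref{pro:mainqaq6} from Theorem \ref{pro:mainqaaqaq6} by recognizing the boundedness of $C_\phi$ as a Carleson embedding statement for the pullback measure $\mu_{\phi,\beta}$. Concretely, I will show that
$$\|C_\phi(F)\|_{A^{\Phi_2}_{\beta}}^{lux} = \|F\|_{L^{\Phi_2}(\mathbb{C}_+, d\mu_{\phi,\beta})}^{lux}$$
for every $F \in A^{\Phi_1}_\alpha(\mathbb{C}_+)$, after which the equivalence of (i) and (ii) is immediate from Theorem \ref{pro:mainqaaqaq6} applied with $\mu = \mu_{\phi,\beta}$.

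First, I would use the definition of the pullback measure in \eqref{eq:inpay} together with the standard change-of-variable (transfer) formula for image measures: for any nonnegative Borel function $g$ on $\mathbb{C}_+$,
$$\int_{\mathbb{C}_+} g(\phi(z))\, dV_\beta(z) = \int_{\mathbb{C}_+} g(w)\, d\mu_{\phi,\beta}(w).$$
Applying this with $g(w) = \Phi_2(|F(w)|/\lambda)$ gives
$$\int_{\mathbb{C}_+} \Phi_2\!\left(\frac{|F \circ \phi(z)|}{\lambda}\right) dV_\beta(z) = \int_{\mathbb{C}_+} \Phi_2\!\left(\frac{|F(w)|}{\lambda}\right) d\mu_{\phi,\beta}(w).$$
Taking the infimum over $\lambda > 0$ for which either side is at most $1$ identifies the two Luxemburg quasi-norms, which yields the claimed identity.

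Next, I would check that this identity transfers boundedness correctly. If $C_\phi : A^{\Phi_1}_\alpha \to A^{\Phi_2}_\beta$ is bounded, then for every $F \in A^{\Phi_1}_\alpha(\mathbb{C}_+)$,
$$\|F\|_{L^{\Phi_2}(\mu_{\phi,\beta})}^{lux} = \|C_\phi(F)\|_{A^{\Phi_2}_\beta}^{lux} \leq C\, \|F\|_{A^{\Phi_1}_\alpha}^{lux},$$
so $A^{\Phi_1}_\alpha(\mathbb{C}_+)$ embeds continuously into $L^{\Phi_2}(\mathbb{C}_+, d\mu_{\phi,\beta})$; conversely, continuity of that embedding immediately gives boundedness of $C_\phi$. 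Since $F \circ \phi$ is holomorphic on $\mathbb{C}_+$ whenever $F$ is, there is no additional subtlety in landing inside the Bergman-Orlicz space rather than just the Orlicz space.

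Finally, I would invoke Theorem \ref{pro:mainqaaqaq6} with the measure $\mu := \mu_{\phi,\beta}$: under the hypotheses of the corollary ($\Phi_1 \in \mathscr{U} \cap \nabla_2$, $\Phi_2 \in \mathscr{L} \cup \mathscr{U}$, monotonicity of $\Phi_1/\Phi_2$, and condition \eqref{eq:inaaaqaay}), the embedding $A^{\Phi_1}_\alpha(\mathbb{C}_+) \hookrightarrow L^{\Phi_2}(\mathbb{C}_+, d\mu_{\phi,\beta})$ is equivalent to $\widetilde{\mu_{\phi,\beta}} \in L^{\Phi_3}(\mathbb{C}_+, dV_\alpha)$. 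This completes the chain of equivalences. There is no genuine obstacle here; the only thing to be careful about is the measurability of $\phi^{-1}(E)$ for Borel $E$, which follows from continuity of $\phi$, so that $\mu_{\phi,\beta}$ is a well-defined positive Borel measure on $\mathbb{C}_+$ to which Theorem \ref{pro:mainqaaqaq6} applies.
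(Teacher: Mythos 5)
Your proposal is correct and is precisely the argument the paper intends: the paper states the corollary "follows directly" from Theorem \ref{pro:mainqaaqaq6}, and your identification of $\|C_\phi(F)\|_{A^{\Phi_2}_\beta}^{lux}$ with $\|F\|_{L^{\Phi_2}(\mu_{\phi,\beta})}^{lux}$ via the image-measure change of variables is the standard (and only) missing step. Nothing further is needed.
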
 

We propose the following abbreviation $\mathrm{ A}\lesssim \mathrm{ B}$ for the inequalities $\mathrm{ A}\leq C\mathrm{ B}$, where $C$ is a positive constant independent of the main parameters. If $\mathrm{ A}\lesssim \mathrm{ B}$ and $\mathrm{ B}\lesssim \mathrm{ A}$, then we write $\mathrm{ A}\sim \mathrm{ B}$.
\medskip

In the next section, we present several useful rwsults needed for the proof our results. The atomic decomposition of our spaces in consideration is proved in Section 3 as well as Theorem \ref{pro:mainqaaqaq6}.

In all what follows, the letter $C$ will be used for nonnegative constants independent of the relevant variables that may change from one occurrence to another. Constants with subscript, such as $C_{s}$, may also change in different occurrences, but depend on the parameters mentioned in it. 

\section{Preliminaries.}

In this section, we present some useful results needed in our presentation.

\subsection{Some properties of growth functions.} 

A growth function  $\Phi$ is equivalent to a convex function if and only if there exists $c > 1$ such that for all  $0<t_{1}<t_{2}$,
\begin{equation}\label{eq:suia8n}
\frac{\Phi(t_{1})}{t_{1}} \leq c\frac{\Phi(ct_{2})}{t_{2}},
 \end{equation}
(see  \cite[Lemma 1.1.1]{kokokrbec}).

\begin{lemma}\label{pro:maiaqaq1q8}
Let  $\Phi$ be a growth function of lower type $p\in (0, \infty)$. The following assertions are satisfied: 
\begin{itemize}
\item[(i)]  $\Phi$ is equivalent to a continuous and increasing  growth function of lower type $p$.
\item[(ii)] The growth function $\Phi_{p}$ defined by
\begin{equation}\label{eq:suiaqaq8n}
\Phi_{p}(t)=\Phi\left( t^{1/p}\right),~~\forall~t\geq 0\end{equation}
is equivalent to a continuous, increasing and convex growth function.
\end{itemize}
\end{lemma}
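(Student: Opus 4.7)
For (i), I would smooth $\Phi$ by the integral transform
$$\widetilde\Phi(t):=\int_{0}^{t}\frac{\Phi(s)}{s}\,ds.$$
The integral converges near $0$ because lower type gives $\Phi(s)\leq Cs^{p}\Phi(1)$ on $[0,1]$, so the integrand is $O(s^{p-1})$ there. By construction $\widetilde\Phi$ is absolutely continuous (hence continuous) and strictly increasing on $[0,\infty)$, since the integrand is strictly positive on $(0,\infty)$. To establish the equivalence with $\Phi$ in the sense of (\ref{eq:equivalent}), I apply the lower type inequality with base $t$ and multiplier $u/t\in[0,1]$ to get $\Phi(u)\leq C(u/t)^{p}\Phi(t)$ for $u\leq t$; integrating yields $\widetilde\Phi(t)\leq (C/p)\Phi(t)$. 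In the opposite direction, by monotonicity of $\Phi$,
$$\widetilde\Phi(et)\geq \int_{t}^{et}\frac{\Phi(s)}{s}\,ds\geq \Phi(t)\log e=\Phi(t),$$
which, together with the previous estimate, delivers the two-sided bound (\ref{eq:equivalent}). Lower type $p$ of $\widetilde\Phi$ follows from the change of variable $s=\tau u$: $\widetilde\Phi(\tau t)=\int_{0}^{t}\Phi(\tau u)/u\,du\leq C\tau^{p}\widetilde\Phi(t)$ for $\tau\in[0,1]$. The remaining growth function axioms at $0$ and $\infty$ transfer from $\Phi$ to $\widetilde\Phi$ via the equivalence just established.

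For (ii), I would proceed by verifying the convexity criterion (\ref{eq:suia8n}) for $\Phi_{p}$. First, $\Phi_{p}(\tau s)=\Phi(\tau^{1/p}s^{1/p})\leq C(\tau^{1/p})^{p}\Phi(s^{1/p})=C\tau\,\Phi_{p}(s)$ for $\tau\in[0,1]$, so $\Phi_{p}$ has lower type $1$. Given $0<t_{1}<t_{2}$, applying lower type of $\Phi$ with base $t_{2}^{1/p}$ and multiplier $(t_{1}/t_{2})^{1/p}\in[0,1]$ gives
$$\Phi(t_{1}^{1/p})\leq C\,\frac{t_{1}}{t_{2}}\,\Phi(t_{2}^{1/p}),\qquad\text{hence}\qquad \frac{\Phi_{p}(t_{1})}{t_{1}}\leq C\,\frac{\Phi_{p}(t_{2})}{t_{2}}\leq C\,\frac{\Phi_{p}(ct_{2})}{t_{2}},$$
where the last step uses monotonicity of $\Phi_{p}$ for any $c\geq 1$. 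Taking $c:=\max\{C,2\}>1$ verifies (\ref{eq:suia8n}) for $\Phi_{p}$, so by that criterion $\Phi_{p}$ is equivalent to some convex function $F$. The equivalence transfers each of the growth function axioms from $\Phi_{p}$ to $F$. Since $F$ is convex on $[0,\infty)$ with $F(0)=0$, a standard argument shows that $F(t)/t$ is nondecreasing, forcing $F$ to be strictly increasing on $(0,\infty)$; convexity also gives continuity on $(0,\infty)$, while the growth function property $\lim_{t\to 0^{+}}F(t)=0$ provides continuity at $0$.

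The substantive work lies in part (i), specifically in pairing the two one-sided bounds $\widetilde\Phi\lesssim\Phi$ and $\Phi(t)\lesssim \widetilde\Phi(et)$ into the symmetric equivalence (\ref{eq:equivalent}) and tracking the integrability of $\Phi(s)/s$ near $0$ even for small $p\in(0,1)$. Once part (i) is in place, part (ii) is comparatively routine: the power substitution $\Phi_{p}(t)=\Phi(t^{1/p})$ promotes lower type $p$ to lower type $1$, after which the criterion (\ref{eq:suia8n}) is essentially automatic and the conclusion follows from elementary properties of convex growth functions.
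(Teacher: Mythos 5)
Your proposal is correct. For part (ii) you argue exactly as the paper does: the lower-type inequality $\Phi(st)\leq cs^{p}\Phi(t)$ applied with $s=(t_{1}/t_{2})^{1/p}$ yields $\frac{\Phi_{p}(t_{1})}{t_{1}}\leq c\,\frac{\Phi_{p}(t_{2})}{t_{2}}$, which is the criterion (\ref{eq:suia8n}), and the conclusion follows from the cited characterization of functions equivalent to convex ones; your additional remarks on strict monotonicity and continuity of the convex representative are standard and correct. Where you diverge is part (i). The paper obtains (i) only implicitly, as a byproduct of (ii): once $\Phi_{p}$ is equivalent to a continuous increasing convex $G$, the composition $\Phi(t)=\Phi_{p}(t^{p})$ is equivalent to $G(t^{p})$, which is continuous and increasing. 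You instead build an explicit regularization $\widetilde\Phi(t)=\int_{0}^{t}\Phi(s)s^{-1}\,ds$, and your verifications are all sound: convergence of the integral near $0$ from the lower-type bound, the upper estimate $\widetilde\Phi(t)\leq (C/p)\Phi(t)$ by integrating $\Phi(u)\leq C(u/t)^{p}\Phi(t)$, the lower estimate $\Phi(t)\leq\widetilde\Phi(et)$ by monotonicity over $[t,et]$, and the preservation of lower type $p$ via the substitution $s=\tau u$. Your route costs a little more computation but buys something the paper's shortcut does not deliver: a concrete, absolutely continuous, \emph{strictly} increasing representative whose lower type $p$ is verified directly rather than inferred, which is arguably a cleaner justification of the precise wording of assertion (i). Both arguments are valid; no gap.
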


\begin{proof}
Since $\Phi$ is a growth function of lower type $p$, we have
$$   \Phi(st) \leq cs^{p}\Phi(t), ~~\forall~0<s\leq 1, ~~\forall~t\geq 0, $$
where $c$  is a constant that depends only on $p$. We deduce that $$ \frac{\Phi_{p}(t_{1})}{t_{1}} \leq c \frac{\Phi_{p}(t_{2})}{t_{2}} ,~~\forall~0<t_{1}< t_{2},   $$
where $\Phi_{p}$ is the growth function defined in (\ref{eq:suiaqaq8n}). It follows that $\Phi_{p}$ is equivalent to a convex function on $[0, \infty)$. 
In particular,  $\Phi$ and  $\Phi_{p}$ are equivalent to continuous and increasing functions on  $[0, \infty)$.
\end{proof}

\begin{remark}\label{pro:main 5aqaqq2pl}
In the following, we will assume that any growth function 
$\Phi$ of lower type $p \in (0, \infty)$ is continuous and increasing. Furthermore, the function  $\Phi_{p}$, defined in relation (\ref{eq:suiaqaq8n}), is a continuous and convex  growth function, thanks to Lemma \ref{pro:maiaqaq1q8}.
\end{remark}

Let  $\Phi \in \mathscr{C}^{1}([0, \infty))$  a growth function. The lower and the upper indices of $\Phi$ are respectively defined by
$$ a_\Phi:=\inf_{t>0}\frac{t\Phi'(t)}{\Phi(t)}
  \hspace*{1cm}\textrm{and} \hspace*{1cm} b_\Phi:=\sup_{t>0}\frac{t\Phi'(t)}{\Phi(t)}.          $$
  We refer to \cite[Lemma 3.1]{djesehb} and \cite[Lemma 2.1]{sehbaedgc} for the following.
  \begin{lemma}
Let $\Phi \in \mathscr{C}^{1}([0, \infty))$ be a growth function. The following assertions are satisfied.
\begin{itemize}
\item[(i)] If  $\Phi \in \mathscr{L} \cup \mathscr{U}$  then  $0< a_\Phi\leq b_\Phi <\infty$.
\item[(ii)]  If  $\Phi \in \mathscr{U}$ then  $\Phi \in \nabla_{2}$ if and only if there is a constant  $C:=C_{\Phi}>0$ such that for all $t > 0$,
\begin{equation}\label{eq:saaui8n}
\int_{0}^{t}\frac{\Phi(s)}{s^{2}}ds \leq C \frac{\Phi(t)}{t}.
\end{equation}
\item[(iii)] If  $0< a_\Phi\leq b_\Phi <\infty$ then the function  $t\mapsto \frac{\Phi(t)}{t^{a_\Phi}}$ is increasing on $(0,\infty)$ while the function  $t\mapsto \frac{\Phi(t)}{t^{b_\Phi}}$ is decreasing on $(0,\infty)$.
\end{itemize}
\end{lemma}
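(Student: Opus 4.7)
My plan is to address the three assertions in the order (iii), (i), (ii), since the later parts build on the preceding ones. For (iii), the point is that the indices directly encode differential inequalities: by definition of the infimum $a_\Phi$, one has $t\Phi'(t) \geq a_\Phi \Phi(t)$ for all $t > 0$, hence
$$\frac{d}{dt}\log\frac{\Phi(t)}{t^{a_\Phi}} = \frac{\Phi'(t)}{\Phi(t)} - \frac{a_\Phi}{t} \geq 0,$$
so $t \mapsto \Phi(t)/t^{a_\Phi}$ is non-decreasing; the statement for $b_\Phi$ is dual.

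For (i), I would treat the two cases separately. If $\Phi \in \mathscr{U}^p$ (convex, with $\Phi(t)/t$ non-decreasing), then the tangent inequality at $t$ applied at $0$, together with $\Phi(0) = 0$, gives $\Phi(t)/t \leq \Phi'(t)$, so $a_\Phi \geq 1$; and the same tangent inequality applied at $2t$ gives $\Phi(2t) \geq \Phi(t) + t\Phi'(t)$, which combined with the upper-type bound $\Phi(2t) \leq C 2^p \Phi(t)$ yields $t\Phi'(t) \leq C 2^p \Phi(t)$, hence $b_\Phi \leq C 2^p < \infty$. If $\Phi \in \mathscr{L}_p$ (concave, with $\Phi(t)/t$ non-increasing), the dual concavity argument gives $\Phi(t) \geq t\Phi'(t)$, so $b_\Phi \leq 1$. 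The lower bound $a_\Phi > 0$ comes from the lower-type relation $\Phi(kt) \geq (k^p/C)\Phi(t)$ together with the mean value theorem and the non-increasing character of $\Phi'$: writing $\Phi(kt) - \Phi(t) \leq (k-1)t\Phi'(t)$ yields $t\Phi'(t) \geq \frac{k^p/C - 1}{k - 1}\Phi(t)$, which is strictly positive once $k$ is chosen large enough that $k^p > C$.

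For (ii), my route is via the intermediate characterization that, for $\Phi \in \mathscr{U}$ convex, the $\nabla_2$ condition is equivalent to the strict index inequality $a_\Phi > 1$. If $a_\Phi > 1$, then (iii) provides $\Phi(s) \leq (s/t)^{a_\Phi}\Phi(t)$ for $0 < s \leq t$, and a direct computation gives
$$\int_0^t \frac{\Phi(s)}{s^2}\,ds \leq \frac{\Phi(t)}{t^{a_\Phi}}\int_0^t s^{a_\Phi - 2}\,ds = \frac{1}{a_\Phi - 1}\cdot\frac{\Phi(t)}{t},$$
which is (\ref{eq:saaui8n}). Conversely, starting from the integral bound, the crude estimate $\int_0^t \Phi(s)/s^2\,ds \geq \int_{t/2}^t \Phi(s)/s^2\,ds \gtrsim \Phi(t/2)/t$ already recovers the $\Delta_2$ condition; a bootstrap combining this with the convexity of $\Phi$ and comparing the integrals at scales $t$ and $kt$ then promotes it to the stronger growth $\Phi(kt) \geq 2k \Phi(t)$ for some $k > 1$, which is a standard characterization of $\nabla_2$ among convex Young functions.

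The main obstacle I expect is the converse direction in (ii): extracting a multiplicative jump of size $2k$ from a scalar integral bound is the point where the convexity of $\Phi$ must be used essentially, because the naive lower estimate on the integral only recovers $\Delta_2$, which is strictly weaker than $\nabla_2$. The cleanest realization is to transform the integral condition into a differential inequality involving $\Phi(t)/t$ that exponentiates to strict polynomial growth $\Phi(t) \gtrsim t^{1+\epsilon}$ with $\epsilon > 0$, equivalently to $a_\Phi > 1$.
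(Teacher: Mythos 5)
The paper itself does not prove this lemma; it simply cites \cite[Lemma 3.1]{djesehb} and \cite[Lemma 2.1]{sehbaedgc}, so there is no in-paper argument to compare against. On the merits: your treatment of (iii) and (i) is correct and complete in outline. For (iii), the definition of $a_\Phi$ gives $t\Phi'(t)\geq a_\Phi\Phi(t)$ and the logarithmic-derivative computation does the rest (note the conclusion is monotonicity in the weak sense, which is what is meant here). For (i), the tangent-line inequality at $0$ (convex case) and the chord/mean-value estimate combined with $\Phi(kt)\geq C^{-1}k^{p}\Phi(t)$ (concave case) correctly yield $a_\Phi\geq 1$, $b_\Phi\leq C2^{p}$ and $b_\Phi\leq 1$, $a_\Phi\geq \frac{k^{p}/C-1}{k-1}>0$ respectively.

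Part (ii) is where the gaps are. First, a concrete error: the estimate $\int_{t/2}^{t}\Phi(s)s^{-2}\,ds\geq \Phi(t/2)/t$ together with the hypothesis only yields $\Phi(t/2)\leq C\Phi(t)$, which is trivially true by monotonicity; it does \emph{not} recover $\Delta_2$ (that is $\Phi(2t)\leq K\Phi(t)$, the reverse comparison). Fortunately $\Delta_2$ for $\Phi$ is automatic from $\Phi\in\mathscr{U}$, so nothing is lost, but the remark as stated is false. Second, the ``bootstrap'' to $\Phi(kt)\geq 2k\Phi(t)$ is not actually carried out, and it is the whole content of the converse. It can be completed: set $G(t)=\int_0^t\Phi(s)s^{-2}\,ds$, so $G'(t)=\Phi(t)t^{-2}\geq G(t)/(Ct)$; integrating gives $G(kt)\geq k^{1/C}G(t)$, and sandwiching $\Phi(t/2)/t\leq G(t)\leq C\Phi(t)/t$ together with $\Delta_2$ for $\Phi$ yields $\Phi(kt)\geq \frac{k^{1+1/C}}{CK}\Phi(t)$, which exceeds $2k\Phi(t)$ once $k\geq(2CK)^{C}$; this is the standard criterion for $\Psi\in\Delta_2$. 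Third, your forward direction rests entirely on the unproven equivalence ``$\Phi\in\nabla_2\iff a_\Phi>1$''. This is a genuine theorem (Krasnoselskii--Rutickii/Rao--Ren), not a definition, and invoking it makes the argument circular in spirit: the implication $\nabla_2\Rightarrow a_\Phi>1$ is exactly as deep as what you are asked to prove. A self-contained route is to use directly that $\Psi\in\Delta_2$ gives some $l>1$ with $\Phi(t)\leq\frac{1}{2l}\Phi(lt)$, and then sum the integral over the dyadic-type blocks $[l^{-n-1}t,l^{-n}t]$ to get $\int_0^t\Phi(s)s^{-2}\,ds\leq 2(l-1)\Phi(t)/t$.
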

We refer to \cite[Corollary 3.3]{djesehafeuto} for the following.
\begin{proposition}\label{pro:main3aaqq7}
Let $s \geq 1$  and   $\Phi \in \mathscr{C}^{1}([0, \infty))$ a growth function such that  $0< a_\Phi \leq b_\Phi < \infty $. Put, for  $t\geq 0$ 
$$  \Phi_{s}(t)=\Phi\left(t^{s/a_\Phi}\right).    $$
The following assertions are satisfied.
\begin{itemize}
\item[(i)] If $s=1$ then $\Phi_{s} \in \mathscr{U}$.
\item[(ii)] If $s>1$ then  $\Phi_{s} \in \mathscr{U} \cap \nabla_{2}$.
\end{itemize}
\end{proposition}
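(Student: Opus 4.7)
The strategy is to exploit the two-sided power control provided by the assumption $0 < a_\Phi \leq b_\Phi < \infty$, which via part (iii) of the preceding lemma gives $u \mapsto \Phi(u)/u^{a_\Phi}$ nondecreasing and $u \mapsto \Phi(u)/u^{b_\Phi}$ nonincreasing. In particular, for any $r \geq 1$ one has the upper-type inequality $\Phi(ru) \leq r^{b_\Phi}\Phi(u)$ for all $u > 0$.

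For part (i), I would introduce the change of variable $u = t^{1/a_\Phi}$ and observe
\[
\frac{\Phi_1(t)}{t} = \frac{\Phi(u)}{u^{a_\Phi}},
\]
which is nondecreasing in $u$ and hence in $t$. For the upper-type inequality, write $\Phi_1(rt) = \Phi(r^{1/a_\Phi} t^{1/a_\Phi}) \leq r^{b_\Phi/a_\Phi}\Phi_1(t)$ for $r \geq 1$, which yields upper type $b_\Phi/a_\Phi \geq 1$. These two facts, together with the convexity/regularity convention inherited from the equivalence criterion \eqref{eq:suia8n}, place $\Phi_1$ in $\mathscr{U}$.

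For part (ii), the same scaling gives $\Phi_s$ upper type $s b_\Phi/a_\Phi \geq 1$, and the ratio
\[
\frac{\Phi_s(t)}{t} = \frac{\Phi(u)}{u^{a_\Phi}}\cdot u^{a_\Phi(1-1/s)}
\]
is nondecreasing in $u$ as a product of two nondecreasing factors (here $s \geq 1$ is used for the second one). Hence $\Phi_s \in \mathscr{U}$. For the $\nabla_2$ property I would invoke the integral characterization from part (ii) of the preceding lemma: it suffices to show $\int_0^t \Phi_s(r)/r^2\,dr \lesssim \Phi_s(t)/t$. After the substitution $u = r^{s/a_\Phi}$ and writing $T := t^{s/a_\Phi}$, the left side becomes a constant multiple of
\[
\int_0^T \frac{\Phi(u)}{u^{a_\Phi}}\,u^{a_\Phi(1-1/s) - 1}\,du.
\]
Pulling the nondecreasing factor $\Phi(u)/u^{a_\Phi}$ out at its value at $u = T$ leaves the pure power integral $\int_0^T u^{a_\Phi(1-1/s) - 1}\,du$, which converges near $0$ precisely when $s > 1$ and equals $T^{a_\Phi(1-1/s)}/[a_\Phi(1-1/s)]$. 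Reassembling, and using $T^{-a_\Phi/s} = t^{-1}$ together with $\Phi(T) = \Phi_s(t)$, gives the desired bound with a constant of order $1/(s-1)$.

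The only real obstacle is precisely this integrability near the origin: it is the single place where $s > 1$ is genuinely needed and it explains the dichotomy between (i), which provides only $\mathscr{U}$, and (ii), which upgrades the conclusion to $\mathscr{U} \cap \nabla_2$. Everything else is bookkeeping with the monotone rearrangements $\Phi(u)/u^{a_\Phi}$ and $\Phi(u)/u^{b_\Phi}$ supplied by the lemma on the indices.
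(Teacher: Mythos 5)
Your argument is correct. Note that the paper itself gives no proof of this proposition: it is quoted as Corollary 3.3 of \cite{djesehafeuto}, so there is nothing to compare against line by line. What you supply is a complete, self-contained derivation from the stated properties of the indices: the monotonicity of $u\mapsto\Phi(u)/u^{a_\Phi}$ and $u\mapsto\Phi(u)/u^{b_\Phi}$ gives both the upper-type exponent $sb_\Phi/a_\Phi\geq 1$ and the monotonicity of $\Phi_s(t)/t$ (hence membership in $\mathscr{U}$, modulo the paper's standing convention that such functions are identified with equivalent convex ones via \eqref{eq:suia8n}), and the $\nabla_2$ property for $s>1$ follows from the integral characterization in part (ii) of the lemma, with your substitution yielding exactly $\int_0^t\Phi_s(r)r^{-2}\,dr\leq\frac{1}{s-1}\,\Phi_s(t)/t$. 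You also correctly isolate where $s>1$ is indispensable, namely the convergence of $\int_0^T u^{a_\Phi(1-1/s)-1}\,du$ at the origin, which is precisely why (i) cannot be upgraded to $\nabla_2$. No gaps.
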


\subsection{Bergman$-$Orlicz spaces on $\mathbb{C}_{+}$.}

Let $\alpha>-1$ and  $0<s< 1$. For a measurable subset  $E$ of $\mathbb{C}_{+}$ and  $z \in \mathbb{C}_{+}$, we will use the following notations
\begin{equation}\label{eq:inpaqmgay}
|E|_{\alpha}:=\int_{E}dV_{\alpha}(z)
 \end{equation}
and 
\begin{equation}\label{eq:inpmgay}
\mathcal{D}_{s}(z):= \mathcal{D}(z,s\mathrm{Im}( z)),  
 \end{equation}
where $\mathcal{D}(a,r)$ is the disk centered at $a\in \mathbb{C}$  with radius $r>0$. We observe that for $z\in \mathbb{C}_{+}$, we have $\mathcal{D}_{s}(z)\subset \mathbb{C}_{+}$.

\medskip

Let $0<s,s', s''  < 1$ and  $\alpha \in \mathbb{R}$ and,  $z, \omega\in \mathbb{C_{+}}$. The following assertions are satisfied.
\begin{itemize}
\item[(i)] For  $\xi \in \mathcal{D}_{s}(z)$, we have
\begin{equation}\label{eq:espeqs1}
  \left[  \frac{1}{\sqrt{2}}\left(1- s \right)  \right] \mathrm{Im}( z) < \mathrm{Im}( \xi) < \left[  \frac{1}{\sqrt{2}}\left(1- s \right)  \right]^{-1} \mathrm{Im}( z).\end{equation}
It follows that,
\begin{equation}\label{eq:espqeqs1}
\mathrm{Im}( z)^{\alpha}\sim \mathrm{Im}( \xi)^{\alpha} \sim |\xi-\overline{z}|^{\alpha} \end{equation} 
and when  $\alpha  >-1$, we have also
\begin{equation}\label{eq:eqas1}
 |\mathcal{D}_{s}(z)|_{\alpha}\sim \mathrm{Im}( z)^{2+\alpha}  \hspace*{0.5cm}\textrm{and} \hspace*{0.5cm}   |\mathcal{D}_{s}(z)|_{\alpha}\sim     |\mathcal{D}_{s'}(\xi)|_{\alpha}.
 \end{equation} 
\item[(ii)] If $(1+ \sqrt{2})s < 1$ and   $s'':=s\left[  \frac{1}{\sqrt{2}}\left(1- s \right)  \right]^{-1}$ then 
\begin{equation}\label{eq:eseqs1}
\chi_{\mathcal{D}_{s}(z)}(\omega)= \chi_{\mathcal{D}_{s''}(\omega)}(z)
 \end{equation}
 and moreover, if  $ |z-\omega| < \frac{s}{2}\mathrm{Im}( z)$, then 
 \begin{equation}\label{eq:iaqnaqea}
  \mathcal{D}_{s/2}(z) \subset \mathcal{D}_{s''}(\omega).
 \end{equation} 
\end{itemize}

\begin{proposition}\label{pro:main1q8}
Let  $\alpha \in \mathbb{R}$ and $\Phi$ a growth function  of lower type. Let $0<s < 1$ such that   $(1+ \sqrt{2})s < 1$ and put $s':=s\left[  \frac{1}{\sqrt{2}}\left(1- s \right)  \right]^{-1}$. For  an analytic function  $F$ on $\mathbb{C_{+}}$ and  $z, \omega\in \mathbb{C_{+}}$,  the following assertions are satisfied.
\begin{itemize}
\item[(i)]  If  $ |z-\omega| < \frac{s}{2}\mathrm{Im}( z)$, then there exists a constant $C_{1}:=C_{s,\alpha} > 0$  such that 
\begin{equation}\label{eq:inehay}
 \Phi(|F(\omega)|)\leq \frac{C_{1}}{\mathrm{Im}( z)^{2+\alpha}}\int \int_{\mathcal{D}_{s'}(z)}\Phi(|F(u+iv)|)v^{\alpha}dudv.
\end{equation}
\item[(ii)] If $ |z-\omega| < \frac{s}{4}\mathrm{Im}( z)$, then  there exists a constant   $C_{2}:=C_{s} > 0$ such that 
\begin{equation}\label{eq:inegqhay}
\Phi(|F(z)-F(\omega)|)\leq C_{2}\int \int_{\mathcal{D}_{s'}(z)}\Phi(|F(u+iv)|)\frac{dudv}{v^{2}}.
 \end{equation}
\end{itemize}
\end{proposition}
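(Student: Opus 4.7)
For part (i), the plan is to exploit subharmonicity. First I would note that since $F$ is holomorphic, $|F|^{p}$ is subharmonic on $\mathbb{C}_+$ for any $p>0$; taking $p$ to be a lower-type index of $\Phi$ and invoking Lemma~\ref{pro:maiaqaq1q8}(ii), the function $\Phi_p(t)=\Phi(t^{1/p})$ is equivalent to a continuous convex function $\widetilde{\Phi}_p$, so $\Phi(|F|)=\Phi_p(|F|^p)\sim \widetilde{\Phi}_p(|F|^p)$ is essentially subharmonic (composition of a convex non-decreasing function with a subharmonic one). Next, I would check geometrically that the hypothesis $|z-\omega|<\tfrac{s}{2}\mathrm{Im}(z)$ lets us take $r=(s'-\tfrac{s}{2})\mathrm{Im}(z)\sim \mathrm{Im}(z)$ and have $\mathcal{D}(\omega,r)\subset \mathcal{D}_{s'}(z)$; the positivity of $r$ follows from $s'=\tfrac{s\sqrt{2}}{1-s}>\tfrac{s}{2}$ for $s\in(0,1)$. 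The sub-mean-value inequality for $\Phi(|F|)$ over $\mathcal{D}(\omega,r)$ then gives
\[\Phi(|F(\omega)|)\lesssim \frac{1}{\mathrm{Im}(z)^{2}}\int_{\mathcal{D}_{s'}(z)}\Phi(|F(u+iv)|)\,du\,dv,\]
and inserting the factor $v^{\alpha}\sim \mathrm{Im}(z)^{\alpha}$ (thanks to (\ref{eq:espqeqs1})) inside the integrand at the cost of $\mathrm{Im}(z)^{-\alpha}$ outside produces (\ref{eq:inehay}).

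For part (ii), the plan is to apply part (i) to the holomorphic function $G(\zeta):=F(\zeta)-F(z)$, which vanishes at $\zeta=z$. Since $|z-\omega|<\tfrac{s}{4}\mathrm{Im}(z)<\tfrac{s}{2}\mathrm{Im}(z)$, part (i) with $\alpha=0$ applied to $G$ yields
\[\Phi(|F(\omega)-F(z)|)=\Phi(|G(\omega)|)\lesssim \frac{1}{\mathrm{Im}(z)^{2}}\int_{\mathcal{D}_{s'}(z)}\Phi(|F(u+iv)-F(z)|)\,du\,dv.\]
I would then use a quasi-triangle estimate $\Phi(|F(u+iv)-F(z)|)\lesssim \Phi(|F(u+iv)|)+\Phi(|F(z)|)$ to split the right-hand side, and handle the remaining term $|\mathcal{D}_{s'}(z)|\cdot \Phi(|F(z)|)\sim \mathrm{Im}(z)^{2}\cdot \Phi(|F(z)|)$ by a second application of part (i), this time at $\omega=z$ with $\alpha=0$, which bounds $\Phi(|F(z)|)$ by $\mathrm{Im}(z)^{-2}\int_{\mathcal{D}_{s'}(z)}\Phi(|F|)\,dA$; so the constant piece is absorbed into $\int_{\mathcal{D}_{s'}(z)}\Phi(|F|)\,dA$. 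Since $\mathrm{Im}(z)^{-2}\sim v^{-2}$ on $\mathcal{D}_{s'}(z)$ by (\ref{eq:espqeqs1}), rewriting the $1/\mathrm{Im}(z)^{2}$ outside as $1/v^{2}$ inside delivers (\ref{eq:inegqhay}).

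The main obstacle is the careful bookkeeping of constants when $\Phi$ is not itself convex: both the passage $\Phi(|F|)\sim \widetilde{\Phi}_p(|F|^p)$ and the quasi-triangle estimate $\Phi(a+b)\lesssim \Phi(a)+\Phi(b)$ introduce multiplicative constants \emph{inside} $\Phi$, and removing them cleanly requires $\Delta_{2}$-type control. This is not spelled out in the statement but is freely available in the paper's intended applications where $\Phi\in \mathscr{U}\cap \nabla_{2}$; the resulting constants are absorbed into $C_{1},C_{2}$.
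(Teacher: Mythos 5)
Your part (i) follows essentially the paper's route: subharmonicity of $|F|^{p}$, convexity of $\Phi_{p}(t)=\Phi(t^{1/p})$, the inclusion of a disk centred at $\omega$ of radius comparable to $\mathrm{Im}(z)$ inside $\mathcal{D}_{s'}(z)$, and the comparison $v^{\alpha}\sim \mathrm{Im}(z)^{\alpha}$ from (\ref{eq:espqeqs1}). The only wrinkle is your detour through a function \emph{equivalent} to $\Phi_{p}$, which plants constants inside the argument of $\Phi$ that you cannot remove without $\Delta_{2}$; the paper sidesteps this by its standing convention (Remark \ref{pro:main 5aqaqq2pl}) that $\Phi_{p}$ itself is convex, so Jensen applies with constant $1$ against the normalized area measure of $\mathcal{D}_{s/2}(\omega)$. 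Adopt that convention and your part (i) is the paper's proof.

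Part (ii) is where you genuinely diverge, and there is a gap. Your subtract-and-split argument hinges on the quasi-subadditivity $\Phi(a+b)\lesssim \Phi(a)+\Phi(b)$, which is equivalent to the $\Delta_{2}$-condition; a general growth function of lower type, which is all the proposition assumes, need not satisfy it (for instance $\Phi(t)=te^{t}$ is of lower type $1$ but $\Phi(2t)/\Phi(t)=2e^{t}\to\infty$). Even granting $\Delta_{2}$, your constant absorbs the $\Delta_{2}$-constant of $\Phi$, whereas the statement asserts $C_{2}=C_{s}$, and this is not cosmetic: in the proof of Lemma \ref{pro:mainqaqa6} the authors insist that the constant in (\ref{eq:inegqhay}) not degrade as the lattice parameter shrinks. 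The paper's proof avoids splitting $\Phi$ of a difference altogether: it writes $F(z)-F(\omega)=\int_{[z,\omega]}F'(\zeta)\,d\zeta$, bounds $|F'(\zeta)|$ by Cauchy's formula over the circle of radius $r=\frac{s}{4}\mathrm{Im}(z)$, notes that every point $\zeta+re^{it}$ lies within $\frac{s}{2}\mathrm{Im}(z)$ of $z$ so that part (i) with $\alpha=-2$ gives $|F(\zeta+re^{it})|\leq \Phi^{-1}\bigl(C_{s}\int_{\mathcal{D}_{s'}(z)}\Phi(|F|)v^{-2}\,du\,dv\bigr)$, and then uses $|z-\omega|<r$ to conclude $|F(z)-F(\omega)|\leq \Phi^{-1}(\cdots)$; applying the increasing function $\Phi$ yields (\ref{eq:inegqhay}) with a constant depending only on $s$ and no $\Delta_{2}$ hypothesis. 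Your version would still cover every application in the paper, where $\Phi\in\mathscr{L}\cup\mathscr{U}$ forces $\Delta_{2}$ with a constant independent of the lattice parameter, but it does not prove the proposition at the stated level of generality.
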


\begin{proof}
Suppose $\Phi$ is of lower type $p \in (0, \infty)$ and for  $t\geq 0$, put 
$  \Phi_{p}(t)=\Phi\left(t^{1/p}\right).  $

\medskip

$i)$ Suppose that  $ |z-\omega| < \frac{s}{2}\mathrm{Im}( z)$. Since  $|F|^{p}$ is subharmonic on $\mathbb{C}_ {+}$ and  $\Phi_{p}$ is convex, we have
\begin{align*}
\Phi(|F(\omega)|)=\Phi_{p}(|F(\omega)|^{p}) &\leq  \Phi_{p}\left(  \frac{4}{\pi (s\mathrm{Im}( \omega))^{2}}\int \int_{\mathcal{D}_{s/2}(\omega)}|F(u+iv)|^{p}dudv  \right)  \\
&\leq \frac{4}{\pi( s\mathrm{Im}( \omega))^{2}}\int \int_{\mathcal{D}_{s/2}(\omega)}\Phi_{p}(|F(u+iv)|^{p})dudv, \\
&\leq  \frac{C}{\mathrm{Im}( \omega)^{2+\alpha}}\int \int_{\mathcal{D}_{s/2}(\omega)}\Phi(|F(u+iv)|)v^{\alpha}dudv \\
&\leq \frac{C}{\mathrm{Im}( z)^{2+\alpha}}\int \int_{\mathcal{D}_{s'}(z)}\Phi(|F(u+iv)|)v^{\alpha}dudv, 
\end{align*}
thanks to Jensen's inequality and, relations (\ref{eq:espqeqs1}) and (\ref{eq:iaqnaqea}).

\medskip

$ii)$ Suppose that  $ |z-\omega| < \frac{s}{4}\mathrm{Im}( z)$ and let us prove the inequality (\ref{eq:inegqhay}). Since  $F$ is an analytic function  on $\mathbb{C_{+}}$, we have $$   F(z)-F(\omega)=\int_{[z,\omega]} F'(\zeta)d\zeta, $$
where $[z,\omega]$ is the segment from $z$ to $\omega$.
For  $\zeta \in [z,\omega]$ and  $r=\frac{s}{4}\mathrm{Im}( z)$, we have
$$ |F'(\zeta)| \leq \frac{1}{2\pi r}\int_{0}^{2\pi} |F(\zeta +re^{it})| dt,     $$
according to Cauchy's inequality.     
For all $t \in [0,2\pi]$, we have
$$ |\zeta +re^{it}-z| \leq  |\zeta +re^{it}-\zeta| + |\zeta-z| \leq  r + |\omega-z| \leq \frac{s\mathrm{Im}( z)}{4} + \frac{s\mathrm{Im}( z)}{4} = \frac{s}{2}\mathrm{Im}( z).    $$
Replacing $\alpha$ by $-2$ in relation (\ref{eq:inehay}), we obtain
$$ |F(\zeta +re^{it})| \leq  \Phi^{-1}\left(C_{s}\int \int_{\mathcal{D}_{s'}(z)}\Phi(|F(u+iv)|)\frac{dudv}{v^{2}}\right),      $$
where $C_{s}$ is a constant which depends only on $s$.
It follows that
\begin{align*}
| F(z)-F(\omega)| &\leq  \int_{[z,\omega]} |F'(\zeta)|d\zeta 
\leq  \int_{[z,\omega]} \left( \frac{1}{2\pi r}\int_{0}^{2\pi} |F(\zeta +re^{it})| dt  \right)d\zeta \\
&\leq \frac{1}{ 2\pi}\int_{0}^{2\pi}\left( \Phi^{-1}\left( C_{s} \int \int_{\mathcal{D}_{s'}(z)}\Phi(|F(u+iv)|)\frac{dudv}{v^{2}} \right)\right)dt \\
&= \Phi^{-1}\left( C_{s}\int \int_{\mathcal{D}_{s'}(z)}\Phi(|F(u+iv)|)\frac{dudv}{v^{2}} \right).
\end{align*}
\end{proof}

The following result follows from inequality (\ref{eq:inehay}). Therefore, the proof will be omitted.

 \begin{proposition}\label{pro:main1aq1apaq8}
Let $\alpha>-1$ and  $\Phi$ a growth function  of lower type. There exists a constant $C:=C_{\alpha, \Phi}>0$ such that for 
$ F \in A^{\Phi}_{\alpha}(\mathbb{C_{+}})$, 
 \begin{equation}\label{eq:inegalehay}
 |F(z)|\leq C\Phi^{-1}\left(\frac{1}{ \mathrm{Im}( z)^{2+\alpha}}\right)\|F\|_{A^{\Phi}_{\alpha}}^{lux},~~\forall~z \in \mathbb{C_{+}}.
 \end{equation}
 \end{proposition}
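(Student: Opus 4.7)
The plan is to apply Proposition \ref{pro:main1q8}(i) pointwise with $\omega=z$ to the normalized function $F/\lambda$ where $\lambda>\|F\|_{A^\Phi_\alpha}^{lux}$, and then invert $\Phi$. By Remark \ref{pro:main 5aqaqq2pl} we may assume $\Phi$ is continuous and strictly increasing, so that $\Phi^{-1}$ is well defined. The function $G:=F/\lambda$ is analytic on $\mathbb{C}_+$ and, by definition of the Luxembourg norm, satisfies $\int_{\mathbb{C}_+}\Phi(|G(w)|)\,dV_\alpha(w)\le 1$.

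Fix any $s\in(0,1)$ with $(1+\sqrt{2})s<1$. Since $|z-\omega|=0<\tfrac{s}{2}\mathrm{Im}(z)$ when $\omega=z$, the hypothesis of (\ref{eq:inehay}) is met; applied to $G$ it yields
\[
\Phi(|G(z)|)\;\le\;\frac{C_1}{\mathrm{Im}(z)^{2+\alpha}}\int\!\!\int_{\mathcal{D}_{s'}(z)}\Phi(|G(u+iv)|)\,v^\alpha\,du\,dv\;\le\;\frac{C_1}{\mathrm{Im}(z)^{2+\alpha}},
\]
since the integrand is nonnegative and its integral over all of $\mathbb{C}_+$ is at most $1$. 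Applying $\Phi^{-1}$ and multiplying by $\lambda$ gives
\[
|F(z)|\;\le\;\lambda\,\Phi^{-1}\!\!\left(\frac{C_1}{\mathrm{Im}(z)^{2+\alpha}}\right).
\]

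The only nontrivial step that remains is absorbing the constant $C_1$ into the argument of $\Phi^{-1}$. Since $\Phi$ has lower type $p\in(0,\infty)$, there exists $M>0$ such that $\Phi(ts)\le Mt^p\Phi(s)$ for all $t\in[0,1]$ and $s\ge 0$. Setting $t=1/K$ for $K\ge 1$ and substituting $Ks$ in place of $s$ gives $\Phi(Ks)\ge M^{-1}K^p\Phi(s)$ for every $s\ge 0$. Choosing $K_0:=(MC_1)^{1/p}\vee 1$ we obtain $\Phi(K_0 s)\ge C_1\Phi(s)$, and applying $\Phi^{-1}$ to this inequality with $s=\Phi^{-1}(y)$ yields $\Phi^{-1}(C_1 y)\le K_0\,\Phi^{-1}(y)$ for all $y\ge 0$. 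Combining with the previous display and letting $\lambda\searrow\|F\|_{A^\Phi_\alpha}^{lux}$ produces the claimed bound with constant $C:=K_0$, depending only on $\alpha$ and $\Phi$.

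The main (and essentially only) obstacle is the last absorption step: passing a multiplicative constant through $\Phi^{-1}$ is not automatic, and one must invoke the lower type inequality to do it quantitatively. The submean-value reduction itself is immediate from Proposition \ref{pro:main1q8}.
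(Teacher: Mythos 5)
Your proof is correct and follows exactly the route the paper intends: the paper omits the proof of this proposition, stating only that it follows from inequality (\ref{eq:inehay}), and your argument (apply (\ref{eq:inehay}) with $\omega=z$ to $F/\lambda$, bound the integral by $1$ via the Luxembourg norm, then absorb the constant into $\Phi^{-1}$ using the lower-type inequality) is the standard way to fill in that omission. The absorption step $\Phi^{-1}(C_1y)\le K_0\Phi^{-1}(y)$ is handled correctly, including the case $MC_1<1$.
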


Let us prove the following.
\begin{lemma}\label{pro:main12amq0}
Let $\alpha>-1$ and  $\Phi$ a growth function  of lower type. The Bergman-Orlicz space $(A^{\Phi}_{\alpha}(\mathbb{C_{+}}),  \|.\|_{A^{\Phi}_{\alpha}}^{lux})$ is a closed subspace of the Orlicz space  $(L^{\Phi}(\mathbb{C_{+}}, dV_{\alpha}),  \|.\|_{L^{\Phi}_{\alpha}}^{lux})$.
\end{lemma}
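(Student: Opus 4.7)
The plan is to show that $A^{\Phi}_{\alpha}(\mathbb{C_{+}})$ is closed in $L^{\Phi}(\mathbb{C_{+}}, dV_{\alpha})$ by extracting an analytic representative from any limit point. Concretely, let $(F_{n}) \subset A^{\Phi}_{\alpha}(\mathbb{C_{+}})$ converge in the Luxemburg norm to some $F \in L^{\Phi}(\mathbb{C_{+}}, dV_{\alpha})$; I want to show that $F$ coincides almost everywhere with an analytic function on $\mathbb{C}_{+}$.

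The first step is to leverage Proposition \ref{pro:main1aq1apaq8}, applied to the analytic functions $F_{n}-F_{m}$, to obtain
$$
|F_{n}(z)-F_{m}(z)|\leq C\,\Phi^{-1}\!\left(\frac{1}{\mathrm{Im}(z)^{2+\alpha}}\right)\|F_{n}-F_{m}\|_{A^{\Phi}_{\alpha}}^{lux},\qquad z\in\mathbb{C_{+}}.
$$
On any compact set $K\subset\mathbb{C_{+}}$ there exists $\eta>0$ with $\mathrm{Im}(z)\geq\eta$ for all $z\in K$, so the factor $\Phi^{-1}(1/\mathrm{Im}(z)^{2+\alpha})$ is bounded on $K$. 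Since $(F_n)$ is Cauchy in the Luxemburg norm, this yields that $(F_n)$ is uniformly Cauchy on every compact subset of $\mathbb{C_{+}}$, hence converges uniformly on compacta to a function $G$ which, by the Weierstrass convergence theorem, is analytic on $\mathbb{C_{+}}$.

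The second step is to identify $G$ with $F$ almost everywhere. Luxemburg-norm convergence implies convergence in $V_\alpha$-measure (a direct consequence of Markov's inequality applied to $\Phi(|F_n-F|/\lambda)$), so a subsequence $(F_{n_k})$ converges to $F$ $V_\alpha$-a.e. Since this same subsequence converges pointwise to $G$ on all of $\mathbb{C}_{+}$, we deduce $F=G$ $V_\alpha$-a.e. Thus $F$ has an analytic representative on $\mathbb{C}_+$, and being already in $L^{\Phi}(\mathbb{C_{+}}, dV_{\alpha})$, it lies in $A^{\Phi}_{\alpha}(\mathbb{C_{+}})$.

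The only delicate point is ensuring the passage from norm convergence in $L^\Phi$ to pointwise (a.e.) convergence of some subsequence; this is standard for Luxemburg norms since $\int_{\mathbb{C}_+}\Phi(|F_n-F|/\|F_n-F\|^{lux})\,dV_\alpha \leq 1$ together with $\|F_n-F\|^{lux}\to 0$ forces convergence in measure on every set of finite $V_\alpha$-measure, after which diagonal extraction produces the desired a.e. convergent subsequence. All other steps are routine consequences of the pointwise estimate \eqref{eq:inegalehay} and Weierstrass's theorem.
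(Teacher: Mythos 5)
Your proposal is correct and follows essentially the same route as the paper: both arguments hinge on the pointwise evaluation bound of Proposition \ref{pro:main1aq1apaq8} applied to $F_n-F_m$, which makes the sequence uniformly Cauchy on compact subsets of $\mathbb{C}_+$ (where $\mathrm{Im}(z)$ is bounded below) and hence produces an analytic local-uniform limit $G$. The only divergence is in identifying $F$ with $G$: the paper applies Fatou's lemma to get $\|F_n-G\|_{A^\Phi_\alpha}^{lux}\le\varepsilon$ and concludes by the triangle inequality, whereas you pass through convergence in $V_\alpha$-measure and an a.e.\ convergent subsequence; both are valid, and your measure-theoretic step does go through on the infinite-measure space since $\Phi(t)\to\infty$ as $t\to\infty$ makes the Chebyshev bound $1/\Phi(\varepsilon/\lambda_n)$ tend to zero.
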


\begin{proof}
Let $(F_{n})_{n}$ be a sequence of elements of $A^{\Phi}_{\alpha}(\mathbb{C_{+ }})$ which converges to $F \in (L^{\Phi}(\mathbb{C_{+}}, dV_{\alpha}),  \|.\|_{L^{\Phi}_{\alpha}}^{lux})$.

\medskip

Let us show that $F$ belongs to $A^{\Phi}_{\alpha}(\mathbb{C_{+ }})$.

\medskip

Let $\varepsilon >0$. Since  $(F_{n})_{n}$ is a Cauchy sequence in  $A^{\Phi}_{\alpha}(\mathbb{C_{+ }})$, there exists $n_{\varepsilon}\in \mathbb{N}$ such that for all $ n,m \geq n_{\varepsilon}$, $\|F_{n}-F_{m}\|_{A ^{\Phi}_{\alpha}}^{lux}\leq \varepsilon$.
Let  $z_{0} \in \mathbb{C_{+ }}$ and $r>0$ such that  $\overline{\mathcal{D}(z_{0}, r)}:=\{z \in \mathbb{C_{+ }} :  |z-z_{0}| \leq r    \}$ is contained in $\mathbb{C_{+ }}$. Put  
$$ A_{r}(z_{0}):=\inf\left\{\mathrm{Im}(z) : z\in \overline{\mathcal{D}(z_{0}, r)}\right\}=\mathrm{Im}(z_{0})-r>0.$$
For $n,m \geq n_{\varepsilon}$ and $z \in \overline{\mathcal{D}(z_{0}, r)}$, we have
$$ |F_{n}(z)-F_{m}(z)| \lesssim \Phi^{-1}\left(\frac{1}{ \mathrm{Im}( z)^{2+\alpha}}\right)\|F_{n}-F_{m}\|_{A^{\Phi}_{\alpha}}^{lux}\lesssim \Phi^{-1}\left(\frac{1}{ ( A_{r}(z_{0}))^{2+\alpha}}\right)\times \varepsilon,   $$
thanks to Proposition \ref{pro:main1aq1apaq8}. We deduce that the sequence $(F_{n})_{n}$ converges uniformly on
$\overline{\mathcal{D}(z_{0}, r)}$. It follows that there exists $G$ an analytic function on $\mathbb{C_{+}}$, such that the sequence  $(F_{n})_{n}$ converges uniformly to $G$ on any compact subset of  $\mathbb{C_{+}}$. 
For $n \geq n_{\varepsilon}$, we have
$$ \int_{\mathbb{C_{+}}}\Phi\left(\frac{|F_{n}(z)-G(z)|}{\varepsilon}\right)dV_{\alpha}(z) \leq  \liminf_{m\to +\infty} \int_{\mathbb{C_{+}}}\Phi\left(\frac{|F_{n}(z)-F_{m}(z)|}{\varepsilon}\right)dV_{\alpha}(z) \leq 1,      $$
thanks to Fatou's lemma. Therefore, the sequence $(F_{n})_{n}$ converges to $G$ in $A^{\Phi}_{\alpha}(\mathbb{C_{+}}).$ As, 
 $$ \|F-G\|_{A^{\Phi}_{\alpha}}^{lux} = \|F-G\|_{L_{\alpha}^{\Phi}}^{lux} \lesssim \|F_{n_{\varepsilon}}-F\|_{L_{\alpha}^{\Phi}}^{lux}+\|F_{n_{\varepsilon}}-G\|_{L_{\alpha}^{\Phi}}^{lux}\lesssim \varepsilon.       $$
It follows that $F\in A^{\Phi}_{\alpha}(\mathbb{C_{+}})$.
\end{proof}

Let $I$ be an interval of nonzero length. The Carleson square associated with $I$, $Q_{I}$ is the subset of $\mathbb{C}_{+}$ defined by
\begin{equation}\label{eq:ualp5rsleson}
Q_{I}:=\left\{x+iy\in \mathbb{C}_{+} : x\in I ~~\text{and}~~ 0<y<|I| \right\}.\end{equation}

Let  $\alpha>-1$ and $F$  a measurable function on $\mathbb{C}_{+}$. The analogue of the Hardy-Littlewood maximal on the upper half-plane of a function $F$ is 
 defined by
$$ \mathcal{M}_{\alpha}(F)(z):= \sup_{I \subset \mathbb{R}}\frac{\chi_{Q_{I}}(z)}{|Q_{I}|_{\alpha}}\int_{Q_{I}}| F(\omega)| dV_{\alpha}(\omega),~~ \forall~ z \in \mathbb{C}_{+},     $$ 
where the supremum is taken over all sub-intervals of $\mathbb{R}$.

\begin{proposition}[Corollary 3.13, \cite{djesehafeuto}  ]\label{pro:mainplaqaqq6}
Let   $\alpha>-1$ and $\Phi \in \mathscr{U}$. The following assertions are equivalent.
\begin{itemize}
\item[(i)] $\Phi \in \nabla_{2}$.
\item[(ii)] $\mathcal{M}_{\alpha}: L^{\Phi}(\mathbb{C}_{+}, dV_{\alpha})\longrightarrow L^{\Phi}(\mathbb{C}_{+}, dV_{\alpha})$ is bounded.
\end{itemize}
\end{proposition}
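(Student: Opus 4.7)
The plan is to treat Proposition 2.8 as the upper half-plane, weighted analogue of the classical Orlicz-space characterization of boundedness of the Hardy-Littlewood maximal operator. The backbone of the argument is the weak-type $(1,1)$ inequality
$$V_\alpha\bigl(\{z\in\mathbb{C}_+ : \mathcal{M}_\alpha f(z)>\lambda\}\bigr)\leq \frac{C}{\lambda}\int_{\mathbb{C}_+}|f(z)|\,dV_\alpha(z),$$
which I would establish through a Vitali-type covering argument for Carleson squares, using the doubling property $|Q_I|_\alpha\sim |I|^{\alpha+2}$. This estimate is the only place where the geometry of $(\mathbb{C}_+,dV_\alpha)$ enters substantively; everything that follows is Orlicz-space bookkeeping built on top of it.

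For (i)$\Rightarrow$(ii), I would normalize $\|f\|_{L^\Phi_\alpha}^{lux}\leq 1$ and decompose $f=f^1_\lambda+f^2_\lambda$ with $f^2_\lambda=f\chi_{\{|f|>\lambda/2\}}$. From the inclusion $\{\mathcal{M}_\alpha f>\lambda\}\subset\{\mathcal{M}_\alpha f^2_\lambda>\lambda/2\}$ combined with the weak-type bound, one obtains
$$V_\alpha(\{\mathcal{M}_\alpha f>\lambda\})\lesssim \frac{1}{\lambda}\int_{\{|f|>\lambda/2\}}|f|\,dV_\alpha.$$
The layer-cake formula for $\int \Phi(\mathcal{M}_\alpha f)\,dV_\alpha$ together with Fubini then gives
$$\int_{\mathbb{C}_+}\Phi(\mathcal{M}_\alpha f)\,dV_\alpha\lesssim \int_{\mathbb{C}_+}|f(z)|\int_0^{2|f(z)|}\frac{\Phi'(\lambda)}{\lambda}\,d\lambda\,dV_\alpha(z).$$
Since $\Phi\in\mathscr{U}$ has finite upper index $b_\Phi<\infty$, Lemma 2.3 forces $\lambda\Phi'(\lambda)\sim \Phi(\lambda)$, so the inner integral is comparable to $\int_0^{2|f(z)|}\Phi(\lambda)/\lambda^2\,d\lambda$. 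Invoking $\nabla_2$ through Lemma 2.3(ii) bounds this by $\Phi(2|f(z)|)/|f(z)|$, and the $\Delta_2$ property absorbs the factor $2$, yielding $\int\Phi(\mathcal{M}_\alpha f)\,dV_\alpha\lesssim 1$, and hence $\|\mathcal{M}_\alpha f\|_{L^\Phi_\alpha}^{lux}\lesssim 1$.

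For (ii)$\Rightarrow$(i), the strategy is to test the operator on characteristic functions of Carleson squares and extract $\nabla_2$. Using the explicit formula $\|\chi_E\|_{L^\Phi_\alpha}^{lux}=1/\Phi^{-1}(1/|E|_\alpha)$ together with the elementary bound $\mathcal{M}_\alpha\chi_{Q_I}(z)\gtrsim |Q_I|_\alpha/|Q_J|_\alpha$ for $z$ in a larger concentric Carleson square $Q_J$, one can form step functions on a nested family of Carleson squares with geometrically decreasing $V_\alpha$-measure and feed the assumed norm inequality into the Luxemburg computation. After unwinding the infima, this produces the integral estimate $\int_0^T\Phi(s)/s^2\,ds\lesssim \Phi(T)/T$ for every $T>0$, which by Lemma 2.3(ii) is exactly $\nabla_2$. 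The main obstacle is this necessity direction: a single-scale test only recaptures the $\mathscr{U}$ structure already assumed on $\Phi$, so one must calibrate a multi-scale test family whose image under $\mathcal{M}_\alpha$ is comparable, scale by scale, to $\int_0^T\Phi(s)/s^2\,ds$; the sufficiency half is, by contrast, a direct transcription of the classical Orlicz-space maximal function estimate once the weak-type $(1,1)$ bound on $(\mathbb{C}_+,dV_\alpha)$ is in hand.
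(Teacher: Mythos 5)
The paper does not actually prove this proposition: it is imported wholesale as Corollary~3.13 of \cite{djesehafeuto}, so there is no internal argument to measure your proof against. Taken on its own terms, your outline is correct and is the natural Gallardo-type characterization of Orlicz boundedness of a maximal operator, transplanted to Carleson squares and the doubling measure $dV_\alpha$ (with $|Q_I|_\alpha\sim|I|^{\alpha+2}$, so $|Q_{3I}|_\alpha\sim|Q_I|_\alpha$ and the Vitali selection goes through). The sufficiency chain is sound: weak type $(1,1)$, the truncation at height $\lambda/2$, the layer-cake identity, the equivalence $\lambda\Phi'(\lambda)\sim\Phi(\lambda)$ supplied by $0<a_\Phi\le b_\Phi<\infty$, and the integral form \eqref{eq:saaui8n} of $\nabla_2$, followed by convexity to pass from the modular bound to the Luxemburg norm. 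One correction on the necessity direction: your concern that a single-scale test cannot produce the condition is unfounded, and the nested step-function family is an unnecessary complication. Testing on the single function $\chi_{Q_{I_0}}$ already works, because on the dilated squares $Q_{J_k}$ with $|J_k|=2^k|I_0|$ one has $\mathcal{M}_\alpha\chi_{Q_{I_0}}\gtrsim |Q_{I_0}|_\alpha/|Q_{J_k}|_\alpha\sim 2^{-k(\alpha+2)}$, while $|Q_{J_k}\setminus Q_{J_{k-1}}|_\alpha\sim 2^{k(\alpha+2)}|Q_{I_0}|_\alpha$; summing $\Phi$ of these values over $k$ is exactly a Riemann sum for $\lambda|Q_{I_0}|_\alpha\int_0^\lambda\Phi(s)s^{-2}\,ds$ with $\lambda=\Phi^{-1}(1/|Q_{I_0}|_\alpha)$, and letting $|I_0|$ vary makes $\lambda$ range over all of $(0,\infty)$, which yields \eqref{eq:saaui8n} after absorbing the constant with $\Delta_2$. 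In other words, the multi-scale structure you need is already encoded in the decay of the maximal function of one characteristic function across the nested family of Carleson squares.
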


Let  $\Phi$ a growth function  of lower type. For $f\in L^{\Phi}(X, d\mu)$, put 
$$ \|f\|_{L_{\mu}^{\Phi}}:= \int_{X}\Phi\left(| F(x)|\right)d\mu(x).   $$
If  $\Phi \in \mathscr{C}^{1}(\mathbb{R}_{+})$ is a growth function such that   $0< a_\Phi\leq b_\Phi <\infty$, then we have the following inequalities:
\begin{equation}\label{eq:inegehay}
\|f\|_{L_{\mu}^{\Phi}} \lesssim \max\left\{ \left(  \|f\|_{L_{\mu}^{\Phi}}^{lux} \right)^{a_\Phi}; \left( \|f\|_{L_{\mu}^{\Phi}}^{lux}\right)^{b_\Phi}\right\}
\end{equation}
and
\begin{equation}\label{eq:inmehay}
\|f\|_{L_{\mu}^{\Phi}}^{lux} \lesssim \max\left\{ \left( \|f\|_{L_{\mu}^{\Phi}}\right)^{1/a_\Phi} ;  \left( \|f\|_{L_{\mu}^{\Phi}}  \right)^{1/b_\Phi}\right\}.
\end{equation}

Wee will simply use the notation  $L^{\Phi}(\mathbb{R}):= L^{\Phi}(\mathbb{R}, dx)$, where $dx$ is the Lebesgue measure on $\mathbb{R}$ and 
$\|F\|_{A^{\Phi}_{\alpha}}:= \|F\|_{L_{\alpha}^{\Phi}},$
when $F \in A^{\Phi}_{\alpha}(\mathbb{C_{+}})$.

\medskip

Let  $\Phi$ a growth function  of lower type. 
The Hardy-Orlicz space on  $\mathbb{C_{+}}$,  $H^{\Phi}(\mathbb{C_{+}})$ consists of all analytic functions $F$ on $\mathbb{C_{+}}$ that satisfy 
$$   \|F\|_{H^{\Phi}}^{lux}:=\sup_{y> 0}\|F(.+iy)\|_{L^{\Phi}}^{lux} < \infty.   $$
If $\Phi$ is  convex  then  $\left(H^{\Phi}(\mathbb{C_{+}}), \|.\|_{H^{\Phi}}^{lux}\right)$ is a Banach space (see \cite{djesehb, Jan, Jan1}).

We can find the following result in \cite{djesehafeuto}.

\begin{proposition}\label{pro:main6apmqaalqpqq4}
Let  $\Phi \in \mathscr{L} \cup \mathscr{U}$ and  $ F\in H^{\Phi}(\mathbb{C_{+}})$.  The following assertions are satisfied:  
\begin{itemize}
\item[(i)] The functions $y\mapsto \|F(.+iy)\|_{L^{\Phi}}$ and  $y\mapsto \|F(.+iy)\|_{L^{\Phi}}^{lux}$ are non-increasing on $(0, \infty)$.
\item[(ii)] There exists a unique function 
     $f\in  L^{\Phi}\left(\mathbb{R}\right)$ such that  
  $  f(x)=\lim_{y\to 0}F(x+iy),$
 for almost all $x\in \mathbb{R}$.
 \end{itemize}
 Moreover,
  \begin{equation}\label{eq:sqapq8n}
  \|F\|_{H^{\Phi}}^{lux}=\lim_{y \to 0}\|F(.+iy)\|_{L^{\Phi}}^{lux}=\|f\|_{L^{\Phi}}^{lux}.
  \end{equation}
\end{proposition}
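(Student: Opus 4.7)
The strategy rests on showing that $u := \Phi(|F|)$ is subharmonic on $\mathbb{C}_+$ under either branch of the hypothesis $\Phi \in \mathscr{L} \cup \mathscr{U}$. If $\Phi \in \mathscr{U}$, then $\Phi$ is convex and non-decreasing while $\log|F|$ is subharmonic, so $u=\Phi(e^{\log|F|})$ is subharmonic as a convex non-decreasing function of a subharmonic function. If instead $\Phi \in \mathscr{L}_p$, then Lemma~\ref{pro:maiaqaq1q8} supplies that $\Phi_p(t):=\Phi(t^{1/p})$ is equivalent to a convex non-decreasing growth function, while $|F|^p$ is subharmonic because $F$ is holomorphic; hence $u=\Phi_p(|F|^p)$ is again subharmonic on $\mathbb{C}_+$.

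Given subharmonicity of $u$, and the uniform $L^1(\mathbb{R})$ control of its horizontal slices provided by $F \in H^\Phi$, I would invoke the standard Poisson majorization for subharmonic functions on the half-plane,
\begin{equation*}
u(x+iy) \leq (P_{y-y_0} \ast u(\cdot+iy_0))(x), \qquad y > y_0 > 0,
\end{equation*}
where $P_t$ denotes the Poisson kernel of $\mathbb{C}_+$. This is the standard consequence of the maximum principle applied to the superharmonic difference between the harmonic Poisson integral of $u(\cdot+iy_0)$ and the translated subharmonic $u$, with the uniform slice bound controlling behaviour at infinity. Integrating in $x$, using Fubini and $\int_\mathbb{R} P_t = 1$, produces
\begin{equation*}
\int_\mathbb{R} \Phi(|F(x+iy)|)\,dx \leq \int_\mathbb{R} \Phi(|F(x+iy_0)|)\,dx.
\end{equation*}
Applying the same argument to $F/\lambda$ and taking the infimum over $\lambda$ proves part (i) for both norms, and in particular yields $\lim_{y\to 0}\|F(\cdot+iy)\|_{L^\Phi}^{lux}=\sup_{y>0}\|F(\cdot+iy)\|_{L^\Phi}^{lux}=\|F\|_{H^\Phi}^{lux}$.

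For part (ii), the lower-type property gives $\Phi(t) \gtrsim t^p$ for $t\geq 1$; combined with the uniform slice bound from (i) and a subharmonic mean-value estimate, this yields the local $L^p$ integrability needed to apply the classical Fatou-type theorem for Hardy spaces, producing an a.e. non-tangential boundary function $f(x)=\lim_{y\to 0}F(x+iy)$, uniquely determined by the pointwise convergence. Fatou's lemma applied to $\int_\mathbb{R}\Phi(|F(x+iy)|/\lambda)\,dx$ as $y\to 0$ then yields $\|f\|_{L^\Phi}^{lux} \leq \lim_{y\to 0}\|F(\cdot+iy)\|_{L^\Phi}^{lux} = \|F\|_{H^\Phi}^{lux}$. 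For the reverse inequality, I would let $y_0\to 0$ in the Poisson majorization above to obtain $\Phi(|F(x+iy)|)\leq (P_y\ast\Phi(|f|))(x)$, then integrate and use $\int P_y = 1$ to conclude $\|F(\cdot+iy)\|_{L^\Phi}^{lux}\leq \|f\|_{L^\Phi}^{lux}$, giving $\|F\|_{H^\Phi}^{lux}\leq \|f\|_{L^\Phi}^{lux}$.

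The main obstacle is the rigorous justification of the Poisson majorization on the unbounded domain $\mathbb{C}_+$: without a growth condition at $\infty$, the maximum principle for subharmonic functions may fail. The way around is to use the uniform $L^1$-slice control of $u$ coming from $F \in H^\Phi$, together with the non-negativity of $u$, to invoke a Phragm\'en--Lindel\"of type argument; the case $\Phi \in \mathscr{L}$ is handled by first passing through the convex surrogate $\Phi_p$ before running the half-plane harmonic-analysis machinery.
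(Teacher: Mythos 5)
First, a point of order: the paper does not actually prove this proposition — it is imported from \cite{djesehafeuto} ("We can find the following result in..."), so there is no in-paper argument to measure yours against. Your outline follows the route such a proof must take: subharmonicity of $\Phi(|F|)$ (directly for $\Phi\in\mathscr{U}$, via the convex surrogate $\Phi_{p}$ of Lemma \ref{pro:maiaqaq1q8} for $\Phi\in\mathscr{L}_{p}$), Poisson majorization of a non-negative subharmonic function with uniformly bounded $L^{1}(\mathbb{R})$ slices, and Fatou's lemma. Part (i) is essentially complete modulo the Phragm\'en--Lindel\"of justification of the majorization on the unbounded domain, which you correctly flag and which is standard.

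Two steps in your treatment of part (ii) and of the identity (\ref{eq:sqapq8n}) have genuine gaps. First, ``local $L^{p}$ integrability'' of the slices does not yield a.e.\ boundary values: there exist holomorphic functions on $\mathbb{C}_{+}$ whose horizontal slices are locally bounded (hence locally in every $L^{p}$) yet which have radial limits almost nowhere. What is needed is the global statement that $F$ lies in the Nevanlinna class of $\mathbb{C}_{+}$. This does follow from your ingredients, but assembled differently: since $\Phi(t)\gtrsim t^{p_{0}}$ for $t\geq 1$, one has $\log^{+}|F|\leq p_{0}^{-1}|F|^{p_{0}}\chi_{\{|F|\geq 1\}}\lesssim \Phi(|F|)$, and the right-hand side is subharmonic with uniformly bounded $L^{1}(\mathbb{R})$ means, hence admits a positive harmonic majorant; therefore so does $\log^{+}|F|$, which is exactly membership in $N(\mathbb{C}_{+})$ and gives the non-tangential limit $f$ a.e. (Alternatively, multiply by $G_{\varepsilon,m}$ as in Proposition \ref{pro:main18} to land in a genuine $H^{p_{0}}$.) Second, and more seriously, your derivation of $\Phi(|F(x+iy)|)\leq (P_{y}\ast\Phi(|f|))(x)$ by ``letting $y_{0}\to 0$'' is not justified: a priori the measures $\Phi(|F(\cdot+iy_{0})|)\,dx$ converge only weak-$\ast$ to $\Phi(|f|)\,dx+d\sigma$ with $\sigma$ a possibly nonzero positive singular measure, and if $\sigma\neq 0$ the inequality $\|F\|_{H^{\Phi}}^{lux}\leq\|f\|_{L^{\Phi}}^{lux}$ fails. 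Ruling out the singular part is the crux of the norm identity; it requires the canonical inner--outer factorization (or an equivalent uniform-integrability argument showing $\Phi(|F(\cdot+iy)|)\to\Phi(|f|)$ in $L^{1}(\mathbb{R})$), and this is precisely the content that the cited reference supplies and your sketch omits.
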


\begin{lemma}\label{pro:main11aaq8}
Let $\alpha>-1$,   $\Phi \in \mathscr{L} \cup \mathscr{U}$ and $ F \in A^{\Phi}_{\alpha}(\mathbb{C_{+}})$.  For $ \varepsilon>0$, put 
\begin{equation}\label{eq:inegalaqehay}
 F_{\varepsilon}(z)=  F(z+i\varepsilon), ~~\forall~x+iy \in \mathbb{C_{+}}.
\end{equation}
The following assertions are satisfied.
\begin{itemize}
\item[(i)] The functions  $y\mapsto \|F(.+iy)\|_{L^{\Phi}}$ and  $y\mapsto \|F(.+iy)\|_{L^{\Phi}}^{lux}$ are non-increasing on $(0,\infty)$;
\item[(ii)] The function $ F_{\varepsilon}$ belongs to $  H^{\Phi}(\mathbb{C}_{+}) \cap A^{\Phi}_{\alpha}(\mathbb{C_{+}})$.  
\end{itemize}
Moreover, 
   \begin{equation}\label{eq:indehay}
   \sup_{\varepsilon>0}\|F_{\varepsilon}\|_{A^{\Phi}_{\alpha}}^{lux}= \|F\|_{A^{\Phi}_{\alpha}}^{lux} \hspace*{0.5cm}\textrm{and} \hspace*{0.5 cm} \lim_{\varepsilon \to 0}\|F_{\varepsilon}-F\|_{A^{\Phi}_{\alpha}}^{lux}=0.
   \end{equation}
\end{lemma}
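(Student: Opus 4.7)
My plan is to first establish the Hardy--Orlicz membership $F_\varepsilon\in H^\Phi(\mathbb{C}_+)$, then bootstrap from Proposition~\ref{pro:main6apmqaalqpqq4} to get (i), next finish the Bergman--Orlicz membership $F_\varepsilon\in A^\Phi_\alpha(\mathbb{C}_+)$, and finally handle the two \emph{moreover} statements.

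To show $F_\varepsilon\in H^\Phi(\mathbb{C}_+)$, I would take $\lambda\geq\|F\|_{A^\Phi_\alpha}^{lux}$ and apply Proposition~\ref{pro:main1q8}(i) with $\omega=z=x+i(y+\varepsilon)$; integrating in $x\in\mathbb{R}$ and swapping integrations by Fubini gives
$$\int_{\mathbb{R}}\Phi\bigl(|F(x+i(y+\varepsilon))|/\lambda\bigr)\,dx\;\lesssim\;\frac{1}{(y+\varepsilon)^{1+\alpha}}\int_{\mathbb{C}_+}\Phi(|F|/\lambda)\,dV_\alpha\;\leq\;\frac{C}{\varepsilon^{1+\alpha}},$$
using $1+\alpha>0$ and $y>0$. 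Rescaling $\lambda$ by a factor controlled by $\varepsilon^{-(1+\alpha)/p}$ (using convexity when $\Phi\in\mathscr{U}$, or the lower type $p$ when $\Phi\in\mathscr{L}$) produces $\lambda'$ with $\sup_{y>0}\int_{\mathbb{R}}\Phi(|F_\varepsilon(x+iy)|/\lambda')\,dx\leq 1$, so $F_\varepsilon\in H^\Phi(\mathbb{C}_+)$. Part (i) is then immediate: Proposition~\ref{pro:main6apmqaalqpqq4}(i) applied to $F_\varepsilon\in H^\Phi$ shows that $y\mapsto\|F_\varepsilon(\cdot+iy)\|_{L^\Phi}^{lux}=\|F(\cdot+i(y+\varepsilon))\|_{L^\Phi}^{lux}$ is non-increasing on $(0,\infty)$; hence $y\mapsto\|F(\cdot+iy)\|_{L^\Phi}^{lux}$ is non-increasing on $(\varepsilon,\infty)$, and since $\varepsilon>0$ is arbitrary, on $(0,\infty)$. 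The same argument applies to the modular $\|\cdot\|_{L^\Phi}$.

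For $F_\varepsilon\in A^\Phi_\alpha$ (completing (ii)), the change of variables $y'=y+\varepsilon$ in the modular yields a factor $(y'-\varepsilon)^\alpha$. When $\alpha\geq 0$ it is dominated by $(y')^\alpha$ and one obtains $\|F_\varepsilon\|_{A^\Phi_\alpha}^{lux}\leq\|F\|_{A^\Phi_\alpha}^{lux}$. When $-1<\alpha<0$, I would split at $y'=2\varepsilon$: on $y'\geq 2\varepsilon$ we still have $(y'-\varepsilon)^\alpha\lesssim(y')^\alpha$, while on $\varepsilon\leq y'\leq 2\varepsilon$ I use (i) and the $H^\Phi$-bound from the previous step to control $\int_{\mathbb{R}}\Phi(|F(x+iy')|/\lambda)\,dx$ uniformly, multiplied by the singular factor $(y'-\varepsilon)^\alpha$, which is integrable near $\varepsilon$ since $\alpha>-1$.

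For the \emph{moreover} part: the inequality $\|F_\varepsilon\|_{A^\Phi_\alpha}^{lux}\leq\|F\|_{A^\Phi_\alpha}^{lux}$ follows from (i) by integrating in $x$ first, and Fatou applied to $F_\varepsilon\to F$ pointwise gives the reverse inequality in the limit, producing $\sup_\varepsilon\|F_\varepsilon\|_{A^\Phi_\alpha}^{lux}=\|F\|_{A^\Phi_\alpha}^{lux}$. For norm convergence, the hypothesis $\Phi\in\mathscr{L}\cup\mathscr{U}$ forces $\Phi\in\Delta_2$ (concavity plus $\Phi(0)=0$ gives subadditivity; convex upper type trivially gives $\Delta_2$), so it suffices to show modular convergence $\int\Phi(|F-F_\varepsilon|/\lambda)\,dV_\alpha\to 0$. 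The integrand converges pointwise to $0$; for a uniform dominant I would apply Proposition~\ref{pro:main1q8}(i) with $z=x+iy$ and $\omega=x+i(y+\varepsilon)$, valid whenever $\varepsilon<sy/2$, which dominates $\Phi(|F_\varepsilon(x+iy)|/\lambda)$ by an averaged maximal function of $\Phi(|F|/\lambda)$ on a disk of radius $\sim y$ around $x+iy$; Fubini shows this lies in $L^1(dV_\alpha)$, and DCT combined with $\Delta_2$ closes the argument. The principal technical obstacle throughout is the case $-1<\alpha<0$, where the weight $y^\alpha$ is singular at the boundary, so both the check $F_\varepsilon\in A^\Phi_\alpha$ and the construction of the DCT dominant require extra splittings that use the $H^\Phi$-bound above.
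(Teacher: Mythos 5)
Your overall architecture matches the paper's: $H^{\Phi}$ membership of $F_{\varepsilon}$ via Proposition \ref{pro:main1q8}(i) and Fubini, the monotonicity in (i) via Proposition \ref{pro:main6apmqaalqpqq4}, the inequality $\|F_{\varepsilon}\|_{A^{\Phi}_{\alpha}}^{lux}\leq\|F\|_{A^{\Phi}_{\alpha}}^{lux}$ from the monotonicity of the inner modular, and Fatou for the reverse inequality. One cosmetic remark: your case split at $\alpha\geq 0$ versus $-1<\alpha<0$ for the membership $F_{\varepsilon}\in A^{\Phi}_{\alpha}(\mathbb{C}_{+})$ is unnecessary, since the one-line estimate $\int_{0}^{\infty}\|F(\cdot+i(y+\varepsilon))\|_{L^{\Phi}}\,y^{\alpha}dy\leq\int_{0}^{\infty}\|F(\cdot+iy)\|_{L^{\Phi}}\,y^{\alpha}dy$, which you already invoke for the sharp inequality, gives membership for every $\alpha>-1$ with no splitting.

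The genuine gap is in the last step, $\lim_{\varepsilon\to 0}\|F_{\varepsilon}-F\|_{A^{\Phi}_{\alpha}}^{lux}=0$. The dominant you extract from Proposition \ref{pro:main1q8}(i) with $z=x+iy$ and $\omega=x+i(y+\varepsilon)$ is only available on the $\varepsilon$-dependent set $\{\varepsilon<sy/2\}$, so dominated convergence cannot be applied with a single dominant; you must show separately that $\int_{\{y\leq 2\varepsilon/s\}}\Phi(|F_{\varepsilon}|/\lambda)\,dV_{\alpha}\to 0$. The tool you cite for that strip, the $H^{\Phi}$-type bound $c(w):=\int_{\mathbb{R}}\Phi(|F(x+iw)|/\lambda)\,dx\lesssim w^{-(1+\alpha)}$, is not enough: since $\int_{0}^{2\varepsilon/s}y^{\alpha}dy\sim\varepsilon^{1+\alpha}$, it yields only $O(1)$, not $o(1)$. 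A correct repair uses the monotonicity of $c$ from part (i): $c(\varepsilon)\,\varepsilon^{1+\alpha}\lesssim\int_{\varepsilon/2}^{\varepsilon}c(w)w^{\alpha}dw=\int_{\{\varepsilon/2<y<\varepsilon\}}\Phi(|F|/\lambda)\,dV_{\alpha}\to 0$ by absolute continuity of the integral, and the strip contribution is bounded by $C\,c(\varepsilon)\,\varepsilon^{1+\alpha}$. The paper sidesteps the strip altogether with a different, $\varepsilon$-uniform dominant: it bounds $|F(z)-F_{\varepsilon}(z)|\lesssim\bigl(\mathcal{M}_{\alpha}(|F|^{a_{\Phi}/2})(z)\bigr)^{2/a_{\Phi}}$ (both Bergman disks sit inside a common Carleson box of comparable $V_{\alpha}$-measure), and integrability of $\Phi$ composed with this dominant follows from Propositions \ref{pro:main3aaqq7} and \ref{pro:mainplaqaqq6}, i.e.\ the boundedness of $\mathcal{M}_{\alpha}$ on $L^{\widetilde{\Phi}}(\mathbb{C}_{+},dV_{\alpha})$ with $\widetilde{\Phi}(t)=\Phi(t^{2/a_{\Phi}})$. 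Either route works, but yours is incomplete until the strip estimate is carried out.
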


\begin{proof}
 Assume that $F\not\equiv 0$ because there is nothing to show when $F\equiv 0$. Without loss of generality, we will also assume that $\|F\|_{A^{\Phi}_{\alpha}}^{lux}=1$.
 
 \medskip
 
$i)$ Let $0<s < 1$ such that   $(1+ \sqrt{2})s < 1$ and put $s':=s\left[  \frac{1}{\sqrt{2}}\left(1- s \right)  \right]^{-1}$. For  $y>0$ and $x \in \mathbb{R}$, using Proposition \ref{pro:main1q8}, we have
\begin{align*}
\Phi\left(| F_{\varepsilon}(x+iy)|\right) &=  \Phi\left(| F(x+i(y+\varepsilon))|\right) \\
&\lesssim \frac{1}{( y+\varepsilon)^{2+\alpha}}\int \int_{\mathcal{D}_{s'}(x+i(y+\varepsilon))}\Phi\left(| F(u+iv)|\right)dV_{\alpha}(u+iv) \\
&\lesssim \frac{1}{( y+\varepsilon)^{2+\alpha}}\int_{|x-u|\leq y+\varepsilon} \int_{|y+\varepsilon-v|\leq y+\varepsilon}\Phi\left(| F(u+iv)|\right)v^{\alpha}dudv. 
\end{align*}
Hence using Fubini's theorem, we obtain
\begin{align*}
\int_{-\infty}^{+\infty}\Phi\left(| F_{\varepsilon}(x+iy)|\right)dx 
&\lesssim \frac{1}{( y+\varepsilon)^{2+\alpha}}\int_{-\infty}^{+\infty}\int_{ -y-\varepsilon+u}^{y+\varepsilon+u} \int_{0}^{\infty}\Phi\left(| F(u+iv)|\right)v^{\alpha}dudvdx \\
&= \frac{1}{( y+\varepsilon)^{2+\alpha}}\int_{-\infty}^{+\infty}\left(\int_{ -y-\varepsilon+u}^{y+\varepsilon+u}dx \right) \int_{0}^{\infty}\Phi\left(| F(u+iv)|\right)v^{\alpha}dudv \\
&\lesssim \frac{1}{\varepsilon^{1+\alpha}}\int_{0}^{\infty}\int_{-\infty}^{+\infty} \Phi\left(| F(u+iv)|\right)v^{\alpha}dudv.
\end{align*}
We deduce that 
$$  \sup_{y>0} \int_{\mathbb{R}}\Phi\left(| F_{\varepsilon}(x+iy)|\right)dx < \infty.
  $$
Therefore, $F_{\varepsilon} \in H^{\Phi}(\mathbb{C}_{+})$. We know from Proposition \ref{pro:main6apmqaalqpqq4} that the functions $y\mapsto \|F_{\varepsilon}(.+iy)\|_{L^{\Phi}}$ and  $y\mapsto \|F_{\varepsilon}(.+iy)\|_{L^{\Phi}}^{lux}$ are non-increasing on $(0, \infty)$. Moreover,  there exists a unique function 
     $f_{\varepsilon}\in  L^{\Phi}\left(\mathbb{R}\right)$ such that  
  $$  f_{\varepsilon}(x):=\lim_{y\to 0}F_{\varepsilon}(x+iy)= F(x+i\varepsilon) ,   $$
 for almost all $x\in \mathbb{R}$. It follows that, for all $\varepsilon>0$ and $y>0$, 
\begin{equation}\label{eq:inegaqthay}
 \|F(.+i(y+\varepsilon))\|_{L^{\Phi}}^{lux} \leq  \|F(.+i\varepsilon)\|_{L^{\Phi}}^{lux}.
\end{equation}
For $0<y_{1}<y_{2}$, we can find $\varepsilon >0$ such that $y_{2}=  y_{1}+\varepsilon$. From (\ref{eq:inegaqthay}), we deduce that
 $$  \|F(.+iy_{2})\|_{L^{\Phi}}^{lux} \leq  \|F(.+iy_{1})\|_{L^{\Phi}}^{lux}.    $$
Likewise, we obtain that
$$  \|F(.+iy_{2})\|_{L^{\Phi}} \leq  \|F(.+iy_{1})\|_{L^{\Phi}}.    $$

\medskip

$ii)$ For $ \varepsilon>0$, we deduce that  $F_{\varepsilon} \in A^{\Phi}_{\alpha}(\mathbb{C_{+}})$ and   
\begin{equation}\label{eq:inegthay}
 \|F_{\varepsilon}\|_{A^{\Phi}_{\alpha}}^{lux}\leq \|F\|_{A^{\Phi}_{\alpha}}^{lux}.
 \end{equation}
Indeed, since the function $y\mapsto \|F(.+iy)\|_{L^{\Phi}}$ is non-increasing on $(0, \infty)$, we have
$$  \int_{0}^{\infty} \left\|F_{\varepsilon}(.+iy)\right\|_{L^{\Phi}} y^{\alpha}dy =\int_{0}^{\infty} \left\|F(.+i(y+\varepsilon))\right\|_{L^{\Phi}} y^{\alpha}dy \leq \int_{0}^{\infty} \left\|F(.+iy)\right\|_{L^{\Phi}} y^{\alpha}dy \leq 1. $$
As
$$ 
  F(z) = \lim_{\varepsilon\to 0} F_{\varepsilon}(z),   $$
for almost all $z \in \mathbb{C}_{+}$, using Fatou's lemma, we deduce that 
\begin{equation}\label{eq:inaldehay}
 \|F\|_{A^{\Phi}_{\alpha}}^{lux} \leq \sup_{\varepsilon>0}\|F_{\varepsilon}\|_{A^{\Phi}_{\alpha}}^{lux}. 
   \end{equation}
Indeed, 
$$ \int_{\mathbb{C}_{+}} \Phi\left(|F(z)|\right)dV_{\alpha}(z)  \leq \liminf_{\varepsilon \to 0}\int_{\mathbb{C}_{+}} \Phi\left(|F_{\varepsilon}(z)|\right)dV_{\alpha}(z) \leq \sup_{\varepsilon> 0}\int_{\mathbb{C}_{+}} \Phi\left(|F_{\varepsilon}(z)|\right)dV_{\alpha}(z).  $$
From (\ref{eq:inegthay}) and (\ref{eq:inaldehay}), we deduce that 
\begin{equation}\label{eq:inaldepmhay}
   \sup_{\varepsilon>0}\|F_{\varepsilon}\|_{A^{\Phi}_{\alpha}}^{lux}=\|F\|_{A^{\Phi}_{\alpha}}^{lux}. 
   \end{equation}
Since $\Phi \in \mathscr{L} \cup \mathscr{U}$, we deduce that  $0< a_\Phi \leq b_\Phi < \infty $.  
Put, for  $t\geq 0$ 
$$  \widetilde{\Phi}(t)=\Phi\left(t^{2/a_\Phi}\right).    $$
By construction,  $\widetilde{\Phi} \in \mathscr{U} \cap \nabla_{2}$, according to Proposition \ref{pro:main3aaqq7}. For $\varepsilon>0$ and $z \in \mathbb{C}_{+}$, we have
$$ |F(z)-F_{\varepsilon}(z)| \lesssim \left(\mathcal{M}_{\alpha}\left(|F|^{a_\Phi/2}\right)(z) \right)^{2/a_\Phi}. $$
Since $F\in L^{\Phi}(\mathbb{C}_{+}, dV_{\alpha})$, we deduce that $|F|^{a_\Phi/2}\in L^{\widetilde{\Phi}}(\mathbb{C}_{+}, dV_{\alpha})$. It follows that $\mathcal{M}_{\alpha}\left(|F|^{a_\Phi/2}\right) \in L^{\widetilde{\Phi}}(\mathbb{C}_{+}, dV_{\alpha})$, thanks to Proposition \ref{pro:mainplaqaqq6}. According to the dominated convergence theorem, we have for all  $\lambda>0$,
$$ \lim_{\varepsilon \to 0} \int_{\mathbb{C}_{+}} \Phi\left(\lambda|F(z)-F_{\varepsilon}(z)|\right)dV_{\alpha}(z) = \int_{\mathbb{C}_{+}}\lim_{\varepsilon \to 0} \Phi\left(\lambda|F(z)-F_{\varepsilon}(z)|\right)dV_{\alpha}(z)=0. $$
The proof is complete.
\end{proof}

Denote by $B$  the usual beta function defined by
 $$  B(m,n) =\int_{0}^{\infty}\dfrac{u^{m-1}}{(1+u)^{m+n}}du , ~~\forall~ m, n > 0.   $$
 
 The following results can be found for example in \cite{bansahsehba}.
  
 \begin{lemma}\label{pro:main26}
  Let  $y > 0$ and $\alpha \in \mathbb{R}$. The integral 
 $$ \textit{J}_{\alpha}(y) =\int_{\mathbb{R}}\dfrac{dx}{|x+iy|^{\alpha}},    $$ 
 converges if and only if $\alpha > 1$. In this case,
 $$ \textit{J}_{\alpha}(y)=B\left(\frac{1}{2}, \frac{\alpha-1}{2} \right)y^{1-\alpha}.    $$
\end{lemma}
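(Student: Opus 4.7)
The plan is straightforward since this is a standard convergence-plus-substitution argument that reduces the integral to the Beta function definition given just above the lemma.

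First I would settle convergence. For fixed $y>0$, the integrand $|x+iy|^{-\alpha}=(x^{2}+y^{2})^{-\alpha/2}$ is continuous and positive on $\mathbb{R}$, so the only issue is the behavior at $\pm\infty$. Since $(x^{2}+y^{2})^{-\alpha/2}\sim |x|^{-\alpha}$ as $|x|\to\infty$, the integral converges if and only if $\alpha>1$ by comparison with $\int_{1}^{\infty}|x|^{-\alpha}dx$.

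Now assume $\alpha>1$. The key step is the dilation substitution $x=yt$, $dx=y\,dt$, which gives $|x+iy|^{\alpha}=y^{\alpha}(1+t^{2})^{\alpha/2}$ and hence
\[
J_{\alpha}(y)=y^{1-\alpha}\int_{\mathbb{R}}\frac{dt}{(1+t^{2})^{\alpha/2}}=2y^{1-\alpha}\int_{0}^{\infty}\frac{dt}{(1+t^{2})^{\alpha/2}},
\]
using the evenness of the integrand. Next I would apply the substitution $u=t^{2}$, so that $dt=\tfrac{1}{2}u^{-1/2}du$, transforming the last integral into
\[
2\int_{0}^{\infty}\frac{dt}{(1+t^{2})^{\alpha/2}}=\int_{0}^{\infty}\frac{u^{-1/2}}{(1+u)^{\alpha/2}}\,du=\int_{0}^{\infty}\frac{u^{\frac{1}{2}-1}}{(1+u)^{\frac{1}{2}+\frac{\alpha-1}{2}}}\,du.
\]
Finally I would recognize this as the Beta integral from the display immediately preceding the lemma with $m=1/2$ and $n=(\alpha-1)/2$, yielding $B\!\left(\tfrac{1}{2},\tfrac{\alpha-1}{2}\right)$ and thus $J_{\alpha}(y)=B\!\left(\tfrac{1}{2},\tfrac{\alpha-1}{2}\right)y^{1-\alpha}$.

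There is no real obstacle: both substitutions are elementary, and the only thing to be a little careful about is that the Beta function representation requires $m,n>0$, which is exactly the condition $\alpha>1$ already imposed for convergence. So the entire argument is a two-line substitution followed by pattern-matching against the Beta definition.
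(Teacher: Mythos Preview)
Your proof is correct; both the convergence argument and the two substitutions reducing the integral to the Beta representation are sound. The paper does not supply its own proof of this lemma but simply cites \cite{bansahsehba}, so there is nothing to compare against; your argument is the standard one and would serve perfectly well.
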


 \begin{lemma}\label{pro:main27}
Let $\alpha, \beta \in \mathbb{R}$ and $t>0$. The integral 	
\begin{equation}\label{eq:alfdyafintgama}
  \textit{I}(t) =\int_{0}^{\infty}\dfrac{y^{\alpha}}{(t+y)^{\beta}}dy, \end{equation}		
converges if and only if $\alpha >- 1$ and  $\beta-\alpha > 1$. In this case,
\begin{equation}\label{eq:alfdybeitete}
\textit{I}(t)=B(1+\alpha, \beta-\alpha-1)t^{-\beta+\alpha+1}.\end{equation}	
  \end{lemma}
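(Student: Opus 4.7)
The plan is to verify convergence by analyzing the integrand at both endpoints, and then compute the value via a single scaling substitution that reduces the integral to the definition of the beta function quoted just above the lemma.

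For convergence, I would split $I(t)$ as $\int_0^1 + \int_1^\infty$. Near $y=0$, the factor $(t+y)^{-\beta}$ is bounded (it tends to $t^{-\beta}$ and $t>0$ is fixed), so the integrand is comparable to $y^\alpha$, which is integrable on $(0,1)$ if and only if $\alpha>-1$. Near $y=\infty$, we have $(t+y)^{-\beta}\sim y^{-\beta}$, so the integrand is comparable to $y^{\alpha-\beta}$, which is integrable on $(1,\infty)$ if and only if $\alpha-\beta<-1$, i.e.\ $\beta-\alpha>1$. Thus both conditions are necessary and sufficient for finiteness.

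Assuming these conditions, I would perform the substitution $y=tu$, $dy=t\,du$, which gives
$$I(t)=\int_0^\infty \frac{(tu)^\alpha}{(t(1+u))^\beta}\,t\,du= t^{\alpha+1-\beta}\int_0^\infty \frac{u^\alpha}{(1+u)^\beta}\,du.$$
Matching the remaining integral with the definition $B(m,n)=\int_0^\infty u^{m-1}(1+u)^{-(m+n)}\,du$ by setting $m=\alpha+1$ and $m+n=\beta$ (so $n=\beta-\alpha-1$) immediately yields $I(t)=B(\alpha+1,\beta-\alpha-1)\,t^{\alpha+1-\beta}$, which is the claimed formula.

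There is no real obstacle here; the result is a routine change of variables followed by recognition of the beta integral. The only bookkeeping point is that the parameter conditions $\alpha+1>0$ and $\beta-\alpha-1>0$ needed for $B(\alpha+1,\beta-\alpha-1)$ to be defined coincide exactly with the convergence conditions derived in the first step, so the identity is valid precisely when the left-hand side makes sense.
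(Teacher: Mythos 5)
Your proof is correct and complete: the endpoint analysis (integrand comparable to $y^{\alpha}$ near $0$ and to $y^{\alpha-\beta}$ near $\infty$, with nonnegativity making the two conditions jointly necessary and sufficient) together with the scaling substitution $y=tu$ reducing the integral to the paper's definition of $B(m,n)$ is exactly the standard argument. The paper itself gives no proof of this lemma --- it only cites the reference \cite{bansahsehba} --- so there is nothing to compare against; your derivation fills that gap correctly, including the observation that the positivity constraints on the beta parameters coincide with the convergence conditions.
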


Let us prove the following.
\begin{proposition}\label{pro:main18}
Let $\alpha>-1$ and   $\Phi$ a growth function 
$\Phi$ of lower type $p \in (0, \infty)$.  For $m\geq 0$, $\varepsilon> 0$ and $z \in \mathbb{C_{+}}$, put
\begin{equation}\label{eq:inegalitedaqay}
  G_{\varepsilon,m}(z)=  \frac{1}{(-i\varepsilon z+1)^{m}}.
 \end{equation}
The following assertions are satisfied.
\begin{itemize}
\item[(i)] If  $mp>1$,  then  $G_{\varepsilon,m}\in H^{\Phi}(\mathbb{C}_{+})$,
\item[(ii)] If $mp>(\alpha +2)$, then  $G_{\varepsilon,m}\in A_{\alpha}^{\Phi}(\mathbb{C}_{+})$.  
\end{itemize}
 \end{proposition}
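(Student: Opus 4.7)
Proof plan for Proposition 2.9.

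The strategy is to estimate $|G_{\varepsilon,m}(z)|$ pointwise, exploit the lower type $p$ of $\Phi$ to reduce everything to an $L^p$-type integral, and then apply Lemmas 2.7 and 2.8 to detect exactly the claimed thresholds. Writing $z=x+iy$ with $y>0$, I compute
\[
-i\varepsilon z+1=(1+\varepsilon y)-i\varepsilon x,\qquad |-i\varepsilon z+1|^{2}=(1+\varepsilon y)^{2}+(\varepsilon x)^{2},
\]
so that $|G_{\varepsilon,m}(z)|=\bigl((1+\varepsilon y)^{2}+(\varepsilon x)^{2}\bigr)^{-m/2}\leq 1$. This bound is the key structural fact: since $|G_{\varepsilon,m}(z)|/\lambda\leq 1$ for any $\lambda\geq 1$, the lower type $p$ property (applied with $s=1$, $t=|G_{\varepsilon,m}(z)|/\lambda$) gives
\[
\Phi\!\left(\frac{|G_{\varepsilon,m}(z)|}{\lambda}\right)\leq C\,\Phi(1)\,\lambda^{-p}\,|G_{\varepsilon,m}(z)|^{p}.
\]

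For assertion (i), I need to bound $\int_{\mathbb{R}}\Phi(|G_{\varepsilon,m}(x+iy)|/\lambda)\,dx$ uniformly in $y>0$ for some admissible $\lambda$. By the pointwise estimate above together with the substitution $u=\varepsilon x$,
\[
\int_{\mathbb{R}}\Phi\!\left(\tfrac{|G_{\varepsilon,m}(x+iy)|}{\lambda}\right)dx
\leq \frac{C\,\Phi(1)}{\lambda^{p}\varepsilon}\int_{\mathbb{R}}\frac{du}{|u+i(1+\varepsilon y)|^{mp}}
=\frac{C\,\Phi(1)}{\lambda^{p}\varepsilon}\,J_{mp}(1+\varepsilon y).
\]
Lemma 2.7 shows that $J_{mp}(1+\varepsilon y)$ converges precisely when $mp>1$ and equals $B\!\left(\tfrac12,\tfrac{mp-1}{2}\right)(1+\varepsilon y)^{1-mp}\leq B\!\left(\tfrac12,\tfrac{mp-1}{2}\right)$. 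Choosing $\lambda$ large enough to force the right-hand side below $1$ yields $\|G_{\varepsilon,m}\|_{H^{\Phi}}^{lux}<\infty$.

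For assertion (ii), I integrate the same pointwise inequality against $dV_{\alpha}$. Using the bound obtained in (i) for the inner $x$-integral,
\[
\int_{\mathbb{C}_{+}}\Phi\!\left(\tfrac{|G_{\varepsilon,m}(z)|}{\lambda}\right)dV_{\alpha}(z)
\leq \frac{C\,\Phi(1)}{\lambda^{p}\varepsilon}\,B\!\left(\tfrac12,\tfrac{mp-1}{2}\right)\int_{0}^{\infty}(1+\varepsilon y)^{1-mp}\,y^{\alpha}\,dy.
\]
After the substitution $v=\varepsilon y$ this is a constant multiple of $\int_{0}^{\infty}v^{\alpha}(1+v)^{1-mp}\,dv$, which by Lemma 2.8 converges if and only if $\alpha>-1$ and $(mp-1)-\alpha>1$, i.e.\ $mp>\alpha+2$. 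Taking $\lambda$ large enough to make the whole expression $\leq 1$ gives $G_{\varepsilon,m}\in A_{\alpha}^{\Phi}(\mathbb{C}_{+})$.

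Both parts therefore reduce to the same two ingredients: the elementary bound $|G_{\varepsilon,m}|\leq 1$, which linearises $\Phi$ via its lower type, and the explicit beta-function integrals of Lemmas 2.7 and 2.8, which dictate the sharp thresholds $mp>1$ and $mp>\alpha+2$. There is no genuine obstacle here; the only mild care needed is the initial observation that $|G_{\varepsilon,m}|\leq 1$ so that the lower type inequality applies in its usable direction, as the argument would otherwise require an upper type assumption that is not in the hypotheses.
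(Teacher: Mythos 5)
Your proposal is correct and follows essentially the same route as the paper: the pointwise bound $|G_{\varepsilon,m}|\leq 1$ combined with the lower type $p$ of $\Phi$ reduces the modular to the integral of $|\varepsilon x+i(1+\varepsilon y)|^{-mp}$, and Lemmas \ref{pro:main26} and \ref{pro:main27} then give exactly the thresholds $mp>1$ and $mp>\alpha+2$. The only cosmetic difference is that you carry an explicit Luxemburg parameter $\lambda$, whereas the paper bounds the modular directly; both are fine.
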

  
\begin{proof}
$(i)$ For  $z \in \mathbb{C_{+}}$, we have 
$$   G_{\varepsilon,m}(z)=  \frac{1}{(-i\varepsilon z+1)^{m}}= \frac{1}{(-i)^{m}(\varepsilon z+i)^{m}}.   $$
Since the function  $t\mapsto \frac{\Phi(t)}{t^{p}}$ is non-decreasing on $(0, \infty)$, we have using Lemma \ref{pro:main26}, that for $y>0$,
\begin{align*}
\int_{\mathbb{R}}\Phi(|G_{\varepsilon,m}(x+iy)|)dx &\lesssim \int_{\mathbb{R}}\frac{dx}{|\varepsilon x+i(1+\varepsilon y)|^{mp}} \\
&\lesssim B\left(\frac{1}{2}, \frac{mp-1}{2} \right)\frac{1/\varepsilon}{(1+\varepsilon y)^{mp-1}} \\
&\lesssim B\left(\frac{1}{2}, \frac{mp-1}{2} \right)\times  \varepsilon^{-1}.
\end{align*}
We conclude that,  $G_{\varepsilon,m}\in H^{\Phi}(\mathbb{C}_{+})$ for $mp>1$.

\medskip

$(ii)$ If $mp>\alpha +2$, then  $G_{\varepsilon,m}\in A_{\alpha}^{\Phi}(\mathbb{C}_{+})$. Indeed, we have
\begin{align*}
\int_{0}^{\infty}\int_{\mathbb{R}}\Phi(|G_{\varepsilon,m}(x+iy)|)y^{\alpha}dx dy  &\lesssim  \int_{0}^{\infty}\left(  \int_{\mathbb{R}}\frac{dx}{|\varepsilon x+i(1+\varepsilon y)|^{mp}} \right) y^{\alpha}dy \\
&=\left(1/\varepsilon\right)^{mp} B\left(\frac{1}{2}, \frac{mp-1}{2} \right)\int_{0}^{\infty}\frac{ y^{\alpha}}{(1/\varepsilon+y)^{mp-1}}dy\\
&=B\left(\frac{1}{2}, \frac{mp-1}{2} \right)B\left(\alpha +1, mp-\alpha-2 \right)\varepsilon^{-2-\alpha}, 
\end{align*}
thanks to Lemma \ref{pro:main26} and Lemma \ref{pro:main27}.
\end{proof}

\begin{lemma}\label{pro:main11aqaaqm8}
Let $\alpha, \beta>-1$ and  $\Phi_{1},\Phi_{2} \in  \mathscr{L} \cup \mathscr{U}$. $A^{\Phi_{1}}_{\alpha}(\mathbb{C_{+}})\cap A^{\Phi_{2}}_{\beta}(\mathbb{C_{+}})$ is a dense subspace of $A^{\Phi_{1}}_{\alpha}(\mathbb{C_{+}})$.
\end{lemma}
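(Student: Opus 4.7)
The strategy is to approximate an arbitrary $F\in A^{\Phi_1}_\alpha(\mathbb{C}_+)$ by functions of the form $F_\varepsilon\cdot G_{\eta,m}$, where $F_\varepsilon(z)=F(z+i\varepsilon)$ is the vertical shift from Lemma \ref{pro:main11aaq8} and $G_{\eta,m}(z)=(-i\eta z+1)^{-m}$ is the atom introduced in Proposition \ref{pro:main18}. The key observations are that $F_\varepsilon$ is bounded on $\mathbb{C}_+$, and that $|G_{\eta,m}(z)|\leq 1$ while $G_{\eta,m}(z)\to 1$ pointwise as $\eta\to 0$. Choosing $m$ large enough that $m p_2>\beta+2$, where $p_2$ is the lower type of $\Phi_2$, Proposition \ref{pro:main18} gives $G_{\eta,m}\in A_\beta^{\Phi_2}(\mathbb{C}_+)$, so the product $F_\varepsilon G_{\eta,m}$ lies in $A_\beta^{\Phi_2}(\mathbb{C}_+)$ (using boundedness of $F_\varepsilon$ and that $\Phi_2\in\Delta_2$, which holds since $\Phi_2\in\mathscr{L}\cup\mathscr{U}$). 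Similarly $|F_\varepsilon G_{\eta,m}|\leq |F_\varepsilon|$ gives $F_\varepsilon G_{\eta,m}\in A^{\Phi_1}_\alpha(\mathbb{C}_+)$, so the candidate approximants live in the intersection.

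The approximation is then carried out in two stages via the triangle inequality
\[
\|F_\varepsilon G_{\eta,m}-F\|_{A^{\Phi_1}_\alpha}^{lux}\leq \|F_\varepsilon(G_{\eta,m}-1)\|_{A^{\Phi_1}_\alpha}^{lux}+\|F_\varepsilon-F\|_{A^{\Phi_1}_\alpha}^{lux}.
\]
Given $\sigma>0$, Lemma \ref{pro:main11aaq8} provides $\varepsilon>0$ making the second term smaller than $\sigma/2$. For this fixed $\varepsilon$, I would send $\eta\to 0$ in the first term. Since $|F_\varepsilon(G_{\eta,m}-1)|\leq 2|F_\varepsilon|$ and $\Phi_1\in\Delta_2$ (again from $\Phi_1\in\mathscr{L}\cup\mathscr{U}$), the function $\Phi_1(\lambda|F_\varepsilon|)$ is integrable against $dV_\alpha$ for every $\lambda>0$; dominated convergence then yields
\[
\int_{\mathbb{C}_+}\Phi_1\!\bigl(\lambda|F_\varepsilon(z)||G_{\eta,m}(z)-1|\bigr)\,dV_\alpha(z)\xrightarrow[\eta\to 0]{}0
\]
for every $\lambda>0$. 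Because $\Phi_1\in\Delta_2$, modular convergence is equivalent to Luxemburg-norm convergence, so the first term falls below $\sigma/2$ for $\eta$ small.

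Putting the two estimates together, for any $\sigma>0$ one obtains $\varepsilon,\eta>0$ with $\|F_\varepsilon G_{\eta,m}-F\|_{A^{\Phi_1}_\alpha}^{lux}<\sigma$. Since $F_\varepsilon G_{\eta,m}\in A^{\Phi_1}_\alpha(\mathbb{C}_+)\cap A^{\Phi_2}_\beta(\mathbb{C}_+)$, this establishes the claimed density. The main technical point to handle carefully is the passage from pointwise convergence to Luxemburg-norm convergence in the term $F_\varepsilon(G_{\eta,m}-1)$; this is where the $\Delta_2$ property inherited from $\Phi_1\in\mathscr{L}\cup\mathscr{U}$ is crucial, allowing dominated convergence to be used uniformly over all scaling parameters $\lambda>0$.
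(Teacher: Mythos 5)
Your proof is correct and follows essentially the same route as the paper's: both approximate $F$ by products of the vertical shift $F_\varepsilon$ (bounded by Proposition \ref{pro:main1aq1apaq8}) with the atoms $G_{\cdot,m}$ of Proposition \ref{pro:main18} for $m$ large, and both conclude by dominated convergence. The only difference is organizational — you decouple the shift parameter from the atom parameter and split the error by the triangle inequality, invoking Lemma \ref{pro:main11aaq8} for the term $\|F_\varepsilon-F\|_{A^{\Phi_1}_\alpha}^{lux}$, whereas the paper couples the two parameters into a single family $F^{(\varepsilon)}=G_{\varepsilon,m}F_\varepsilon$ and runs one dominated-convergence argument with the maximal-function majorant — which does not change the substance of the argument.
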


\begin{proof}
Since  $\Phi_{j} \in \mathscr{L} \cup \mathscr{U}$, we deduce that  $0< a_{\Phi_{j}}\leq b_{\Phi_{j}} <\infty$.

\medskip

Let  $F\in A^{\Phi_{1}}_{\alpha}(\mathbb{C_{+}})$. Let $m \in \mathbb{R}$ be such that  $ma_{\Phi_{2}} > \beta+2$. For  $\varepsilon>0$ and $z \in \mathbb{C_{+}}$, put 
\begin{equation}\label{eq:inaliedehay}
  F^{(\varepsilon)}(z) = G_{\varepsilon, m}(z)F_{\varepsilon}(z),   
\end{equation}
where $F_{\varepsilon}$ and $G_{\varepsilon, m}$ are the functions defined respectively  in (\ref{eq:inegalaqehay})
 and in (\ref{eq:inegalitedaqay}). 
 
\medskip
 
Let us show that $F^{(\varepsilon)}$ belongs to $A^{\Phi_{1}}_{\alpha}(\mathbb{C_{+}})$. Since  $ F_{\varepsilon} \in  A^{\Phi_{1}}_{\alpha}(\mathbb{C_{+}})$ and $\|F_{\varepsilon}\|_{A^{\Phi_{1}}_{\alpha}}^{lux} \leq \|F\|_{A^{\Phi_{1}}_{\alpha}}^{lux}$, thanks to Lemma \ref{pro:main11aaq8}. As  $|G_{\varepsilon,m}(z)|<1$, for all  $z \in \mathbb{C_{+}}$, we have
$$ \int_{\mathbb{C}_{+}}\Phi_{1}\left( \frac{|F^{(\varepsilon)}(z)|}{\|F\|_{A^{\Phi_{1}}_{\alpha}}^{lux}}  \right)dV_{\alpha}(z) \leq \int_{\mathbb{C}_{+}}\Phi_{1}\left( \frac{|F_{\varepsilon}(z)|}{\|F\|_{A^{\Phi_{1}}_{\alpha}}^{lux}}  \right)dV_{\alpha}(z) \leq 1.     $$
We deduce that
 $F^{(\varepsilon)} \in A^{\Phi_{1}}_{\alpha}(\mathbb{C_{+}})$.

\medskip
 
We still have to prove that $F^{(\varepsilon)} \in A^{\Phi_{2}}_{\beta}(\mathbb{C_{+}})$.  For $z=x+iy \in \mathbb{C_{+}}$, we have 
$$ |F_{\varepsilon}(x+iy)| = |F(x+i(y+\varepsilon))| \lesssim \Phi_{1}^{-1}\left(\frac{1}{ (y+\varepsilon)^{2+\alpha}}\right)\|F\|_{A^{\Phi_{1}}_{\alpha}}^{lux} \lesssim \Phi_{1}^{-1}\left(\frac{1}{ \varepsilon^{2+\alpha}}\right)\|F\|_{A^{\Phi_{1}}_{\alpha}}^{lux},   $$
thanks to Proposition \ref{pro:main1aq1apaq8}. We deduce that 
$$ \Phi_{2}(|F_{\varepsilon}(z)G_{m,\varepsilon}(z)|) \lesssim \Phi_{2}( |G_{m,\varepsilon}(z)|).   $$
Since  $G_{m,\varepsilon} \in A^{\Phi_{2}}_{\beta}(\mathbb{C_{+}})$, thanks to Proposition \ref{pro:main18}, it follows that  $F^{(\varepsilon)} \in A^{\Phi_{2}}_{\beta}(\mathbb{C_{+}})$. 

\medskip

Now, for  $t\geq 0$, put 
$$  \widetilde{\Phi}(t)=\Phi\left(t^{2/a_{\Phi_{1}}}\right).    $$
By construction,  $\widetilde{\Phi} \in \mathscr{U} \cap \nabla_{2}$. Since $F\in L^{\Phi_{1}}(\mathbb{C}_{+}, dV_{\alpha})$, we deduce that $|F|^{a_{\Phi_{1}}/2}\in L^{\widetilde{\Phi}}(\mathbb{C}_{+}, dV_{\alpha})$. It follows that $\mathcal{M}_{\alpha}\left(|F|^{a_{\Phi_{1}}/2}\right) \in L^{\widetilde{\Phi}}(\mathbb{C}_{+}, dV_{\alpha})$, thanks to Proposition \ref{pro:mainplaqaqq6}. As
$$   \lim_{\varepsilon \to 0} F^{(\varepsilon)}(z) = F(z),   
   $$
for all  $z \in \mathbb{C_{+}}$ and
$$ 
 |F(z)-F_{\varepsilon}(z)| \lesssim \left(\mathcal{M}_{\alpha}\left(|F|^{a_{\Phi_{1}}/2}\right)(z) \right)^{2/a_{\Phi_{1}}},    $$
according to the dominated convergence theorem, we have 
 for all  $\lambda>0$,
$$ \lim_{\varepsilon \to 0} \int_{\mathbb{C}_{+}} \Phi_{1}\left(\lambda|F(z)-F^{(\varepsilon)}(z)|\right)dV_{\alpha}(z) = \int_{\mathbb{C}_{+}}\lim_{\varepsilon \to 0} \Phi_{1}\left(\lambda|F(z)-F^{(\varepsilon)}(z)|\right)dV_{\alpha}(z)=0. $$
We conclude that $F^{(\varepsilon)} \in A^{\Phi_{1}}_{\alpha}(\mathbb{C_{+}})\cap A^{\Phi_{2}}_{\beta}(\mathbb{C_{+}})$ and 
$$  \lim_{\varepsilon \to 0}\|F^{(\varepsilon)}-F\|_{A^{\Phi}_{\alpha}}^{lux} =0.  $$
The proof is complete.
\end{proof}

Let $\alpha>-1$. The Bergman kernel on $\mathbb{C}_{+}$ is defined by 
\begin{equation}\label{eq:fo4nAQ1l}
K_{\alpha}(z,\omega) =\left(\frac{z-\overline{\omega}}{i}\right)^{-\alpha-2}, ~~ \forall~ z, \omega \in \mathbb{C}_{+}.
\end{equation}
Let  $\alpha>-1$ and $F$  a measurable function on $\mathbb{C}_{+}$. The Bergman projection and positive Bergman operator of $F$ are respectively defined by
$$  \mathcal{P}_{\alpha}(F)(z):= \int_{\mathbb{C}_{+}}K_{\alpha}(z,\omega)F(\omega)dV_{\alpha}(\omega), ~~ \forall~z \in \mathbb{C}_{+}  $$
and
$$  \mathcal{P}_{\alpha}^{+}(F)(z):= \int_{\mathbb{C}_{+}}|K_{\alpha}(z,\omega)|F(\omega)dV_{\alpha}(\omega), ~~ \forall~z \in \mathbb{C}_{+}.  $$

\begin{proposition}\label{pro:mainplaq6}
Let   $\alpha>-1$ and $\Phi \in \mathscr{U} \cap \nabla_{2}$. For $z \in \mathbb{C}_{+}$, the  Bergman kernel $K_{\alpha}(z,.)\in A^{\Phi}_{\alpha}(\mathbb{C_{+}})$. 
\end{proposition}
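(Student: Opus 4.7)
The plan is to bound the integral $I(z_0):=\int_{\mathbb{C}_+}\Phi(|K_\alpha(w,z_0)|)\,dV_\alpha(w)$ directly (interpreting $K_\alpha(z,\cdot)$ as the holomorphic Bergman kernel section $w\mapsto K_\alpha(w,z_0)=\left(\frac{w-\overline{z_0}}{i}\right)^{-\alpha-2}$, which has the same modulus as $K_\alpha(z_0,\cdot)$). Once $I(z_0)<\infty$ is established, the convexity of $\Phi\in\mathscr{U}$ and the fact that $\Phi(0)=0$ give $\Phi(t/\lambda)\le\lambda^{-1}\Phi(t)$ for $\lambda\ge 1$, so choosing $\lambda\ge I(z_0)$ one obtains $\int\Phi(|K_\alpha(w,z_0)|/\lambda)\,dV_\alpha(w)\le 1$, whence $K_\alpha(\cdot,z_0)\in A_\alpha^\Phi(\mathbb{C}_+)$.

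To estimate $I(z_0)$ with $z_0=x_0+iy_0$, first translate $u\mapsto u+x_0$ and make the change of variables $u=(v+y_0)s$ in the inner integral to factor out the scale:
\begin{equation*}
I(z_0)=\int_0^\infty (v+y_0)\left(\int_{-\infty}^{\infty}\Phi\bigl((v+y_0)^{-\alpha-2}(s^2+1)^{-(\alpha+2)/2}\bigr)\,ds\right)v^\alpha\,dv.
\end{equation*}
Since $(s^2+1)^{-(\alpha+2)/2}\le 1$ and $\Phi$ is convex with $\Phi(0)=0$, one has $\Phi(\lambda t)\le \lambda\Phi(t)$ for $\lambda\in[0,1]$. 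Applying this pulls out the Poisson-type factor, whose integral converges thanks to $\alpha+2>1$. This reduces the estimate to
\begin{equation*}
I(z_0)\lesssim \int_0^\infty (v+y_0)^{1+\alpha}\Phi\bigl((v+y_0)^{-\alpha-2}\bigr)\,dv.
\end{equation*}

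Substituting $r=(v+y_0)^{-\alpha-2}$ turns this one-dimensional integral exactly into $\frac{1}{\alpha+2}\int_0^{y_0^{-\alpha-2}}\Phi(r)\,r^{-2}\,dr$. Here is where the hypothesis $\Phi\in\nabla_2$ is indispensable: by Lemma 2.3(ii) we have $\int_0^t\Phi(r)/r^2\,dr\lesssim \Phi(t)/t$, which immediately gives $I(z_0)\lesssim y_0^{\alpha+2}\Phi(y_0^{-\alpha-2})<\infty$.

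The main obstacle I anticipate is the $\nabla_2$ step. Without it, the integral $\int_0^{y_0^{-\alpha-2}}\Phi(r)/r^2\,dr$ need not converge near $r=0$ for general growth functions of upper type, so the $\nabla_2$-equivalent bound of Lemma 2.3(ii) is exactly what allows the argument to close. The convexity-based elimination of the $(s^2+1)^{-(\alpha+2)/2}$ factor is standard but must be done before the substitution, since otherwise one faces an integral of $\Phi$ of a product which is not separable and the upper-type bound alone (giving an unfavorable power $p\ge 1$) would produce a divergent tail in $s$.
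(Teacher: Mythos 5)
Your argument is correct in substance but follows a genuinely different route from the paper's. The paper disposes of this proposition in one line by invoking Lemma \ref{pro:main26} and Lemma \ref{pro:main27}: the intended mechanism is that $\Phi\in\mathscr{U}\cap\nabla_2$ forces the lower index $a_\Phi$ to exceed $1$, so that $\Phi(t)\leq \Phi(1)t^{a_\Phi}$ for $t\leq 1$ reduces the membership of the kernel in $A^{\Phi}_\alpha$ to its membership in the classical Bergman space $A^{a_\Phi}_\alpha(\mathbb{C}_+)$, which is exactly what the two power-integral lemmas deliver (convergence requires $(\alpha+2)a_\Phi-\alpha>2$, i.e. $a_\Phi>1$). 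You instead integrate $\Phi$ of the kernel directly, peel off the Poisson-type factor by convexity, and reduce everything to the one-dimensional integral $\int_0^{y_0^{-\alpha-2}}\Phi(r)r^{-2}\,dr$, which you control by the integral characterization (\ref{eq:saaui8n}) of $\nabla_2$. Both proofs hinge on $\nabla_2$ at exactly the same point; yours has the merit of being self-contained (it never needs the fact that $\nabla_2$ implies $a_\Phi>1$) and of producing the clean quantitative bound $I(z_0)\lesssim y_0^{\alpha+2}\Phi(y_0^{-\alpha-2})$, which in turn gives an explicit estimate on $\Vert K_\alpha(z,\cdot)\Vert_{A_\alpha^\Phi}^{lux}$. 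Your normalization step ($\Phi(t/\lambda)\leq\lambda^{-1}\Phi(t)$ for $\lambda\geq1$, from the monotonicity of $t\mapsto\Phi(t)/t$) and the identification of $K_\alpha(z,\cdot)$ with the holomorphic section of the same modulus are both fine.

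One small repair is needed: the passage from $\int_0^\infty (v+y_0)\,\Phi\bigl((v+y_0)^{-\alpha-2}\bigr)v^\alpha\,dv$ to $\int_0^\infty (v+y_0)^{1+\alpha}\,\Phi\bigl((v+y_0)^{-\alpha-2}\bigr)\,dv$ uses $v^\alpha\leq (v+y_0)^\alpha$, which is false pointwise when $-1<\alpha<0$. The conclusion survives: split at $v=y_0$; on $(0,y_0)$ bound $(v+y_0)\sim y_0$ and $\Phi\bigl((v+y_0)^{-\alpha-2}\bigr)\leq\Phi\bigl(y_0^{-\alpha-2}\bigr)$ and use $\int_0^{y_0}v^\alpha\,dv<\infty$, which already yields $y_0^{\alpha+2}\Phi\bigl(y_0^{-\alpha-2}\bigr)$ up to a constant; on $(y_0,\infty)$ one has $v^\alpha\leq 2^{-\alpha}(v+y_0)^\alpha$ and your substitution plus (\ref{eq:saaui8n}) applies verbatim. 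With that adjustment the proof is complete for all $\alpha>-1$.
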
 

\begin{proof}
The proof follows from Lemma \ref{pro:main26} and Lemma \ref{pro:main27}.
\end{proof}

We have the following.
\begin{theorem}\label{pro:mainplaqaaqaaqq6}
Let   $\alpha>-1$ and $\Phi \in \mathscr{U} \cap \nabla_{2}$. The following assertions are satisfied:
\begin{itemize}
\item[(i)]  $\mathcal{P}_{\alpha}: L^{\Phi}(\mathbb{C}_{+}, dV_{\alpha}) \longrightarrow A_{\alpha}^{\Phi}(\mathbb{C}_{+})$ is bounded.
\item[(ii)]  $\mathcal{P}_{\alpha}^{+}: L^{\Phi}(\mathbb{C}_{+}, dV_{\alpha})\longrightarrow L^{\Phi}(\mathbb{C}_{+}, dV_{\alpha})$ is bounded.
\end{itemize}
\end{theorem}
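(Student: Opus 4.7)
The plan is to prove assertion (ii) first and then deduce (i) from it.

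For (ii), I would use a Schur-type test adapted to the Orlicz setting. The natural test function on the upper half-plane is $h_{s}(z) = (\mathrm{Im}(z))^{-s}$ for a suitable $s > 0$. Integrating first with respect to the real variable via Lemma~\ref{pro:main26} and then with respect to the imaginary variable via Lemma~\ref{pro:main27}, one obtains, for $0 < s < \alpha + 1$, the self-reproducing identity
$$\int_{\mathbb{C}_+} \frac{h_{s}(\omega)}{|z - \overline{\omega}|^{\alpha+2}}\, dV_\alpha(\omega) = C_{s,\alpha}\, h_{s}(z),$$
and, by the symmetry $z \leftrightarrow \omega$ of the kernel, a dual version. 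To pass from these kernel estimates to the boundedness of $\mathcal{P}_\alpha^+$ on $L^\Phi(\mathbb{C}_+, dV_\alpha)$, I would factor the integrand as $F(\omega)\, h_{s}(\omega)^{-\theta} \cdot h_{s}(\omega)^{\theta}/|z-\overline{\omega}|^{\alpha+2}$ and apply Young's inequality in its Orlicz form $AB \leq \Phi(A) + \Psi(B)$, where $\Psi$ is the complementary function of $\Phi$ and $\theta$ is chosen in the range prescribed by the indices $a_\Phi$ and $b_\Phi$. This yields the modular bound $\int_{\mathbb{C}_+} \Phi(\mathcal{P}_\alpha^+ F)\, dV_\alpha \lesssim \int_{\mathbb{C}_+} \Phi(|F|)\, dV_\alpha$, which transfers to the Luxembourg norm thanks to (\ref{eq:inegehay}) and (\ref{eq:inmehay}) together with the fact that $\Phi \in \nabla_{2}$ forces both $\Phi, \Psi \in \Delta_{2}$.

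For (i), given $F \in L^\Phi(\mathbb{C}_+, dV_\alpha)$, the pointwise bound $|\mathcal{P}_\alpha F(z)| \leq \mathcal{P}_\alpha^+(|F|)(z)$ together with (ii) immediately yields $\mathcal{P}_\alpha F \in L^\Phi(\mathbb{C}_+, dV_\alpha)$ with the desired norm estimate. The holomorphy of $\mathcal{P}_\alpha F$ on $\mathbb{C}_+$ then follows from standard differentiation under the integral sign: the absolute convergence of the defining integral (guaranteed by $\mathcal{P}_\alpha^+|F| < \infty$ almost everywhere) combined with the holomorphy of $z \mapsto K_\alpha(z,\omega)$ and local uniform bounds of the $\bar z$-derivative of the kernel on compact subsets of $\mathbb{C}_+$ (available from the explicit form of $K_\alpha$) show that $\overline{\partial}(\mathcal{P}_\alpha F) \equiv 0$, hence $\mathcal{P}_\alpha F \in A_\alpha^\Phi(\mathbb{C}_+)$.

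The main obstacle is the Orlicz adaptation of Schur's test: the classical $L^p$ Schur test uses H\"older's inequality with conjugate exponent, and its replacement by Young's inequality in the Orlicz form requires both $\Phi \in \Delta_{2}$ and $\Psi \in \Delta_{2}$ --- precisely the content of the $\nabla_{2}$ hypothesis --- in order to transfer the resulting modular inequality back to a Luxembourg-norm inequality. The delicate technical point is the calibration of the exponent $\theta$ within the range dictated by the indices $a_\Phi$ and $b_\Phi$, which ensures simultaneously the convergence of the self-reproducing integrals for $h_s^\theta$ and the applicability of Young's inequality at the target scale.
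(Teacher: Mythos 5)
Your reduction of (i) to (ii) via the pointwise bound $|\mathcal{P}_{\alpha}F|\leq \mathcal{P}_{\alpha}^{+}(|F|)$ and the holomorphy of $z\mapsto K_{\alpha}(z,\omega)$ is fine and agrees with the paper. The problem is the core of (ii): the passage from Young's inequality to the modular bound is not a valid step, and it is precisely the step where the classical Schur test refuses to generalize. The $L^{p}$ Schur test works because H\"older's inequality turns the factorization of the integrand into a \emph{product} of two integrals, one of which is absorbed by the self-reproducing identity for $h_{s}$ while the other carries $|F|^{p}$. Young's inequality $AB\leq \Phi(A)+\Psi(B)$ produces a \emph{sum} instead. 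Concretely, with your factorization $A=|F(\omega)|h_{s}(\omega)^{-\theta}$, $B=h_{s}(\omega)^{\theta}|z-\overline{\omega}|^{-\alpha-2}$, you get
\begin{equation*}
\mathcal{P}_{\alpha}^{+}F(z)\leq \int_{\mathbb{C}_{+}}\Phi\bigl(|F(\omega)|h_{s}(\omega)^{-\theta}\bigr)dV_{\alpha}(\omega)+\int_{\mathbb{C}_{+}}\Psi\bigl(h_{s}(\omega)^{\theta}|z-\overline{\omega}|^{-\alpha-2}\bigr)dV_{\alpha}(\omega).
\end{equation*}
The first term is a constant in $z$ (and need not even be finite, since $h_{s}^{-\theta}=(\mathrm{Im}\,\omega)^{s\theta}$ is unbounded and $F\in L^{\Phi}$ gives no control of $\Phi(|F|h_{s}^{-\theta})$); applying $\Phi$ to the right-hand side and integrating over the infinite-measure space $\mathbb{C}_{+}$ then produces $\Phi$ applied twice to $F$, plus a divergent constant term --- nothing resembling $\int\Phi(|F|)dV_{\alpha}$. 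No choice of $\theta$ in terms of $a_{\Phi},b_{\Phi}$ repairs this, because the defect is structural, not a matter of calibration.

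For comparison, the paper avoids the Schur test entirely: it decomposes $\mathbb{C}_{+}$ into dyadic annuli of Carleson squares around the evaluation point, obtains the pointwise domination $\mathcal{P}_{\alpha}^{+}(|F|)(z)\lesssim \sum_{j}\bigl(\frac{1}{|Q_{I_{j}}|_{\alpha}}\int_{Q_{I_{j}}}|F|\,dV_{\alpha}\bigr)\chi_{Q_{I_{j}}}(z)$, and then tests against $G\in L^{\Psi}$ with $\|G\|_{L_{\alpha}^{\Psi}}^{lux}\leq 1$ to bound the pairing by $\int\mathcal{M}_{\alpha}(F)\mathcal{M}_{\alpha}(G)\,dV_{\alpha}$; the H\"older--Orlicz inequality and the boundedness of $\mathcal{M}_{\alpha}$ on both $L^{\Phi}$ and $L^{\Psi}$ (Proposition \ref{pro:mainplaqaqq6}, which is exactly where $\nabla_{2}$ is used) finish the argument. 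If you want to salvage your outline, you would need to replace the Young-inequality step by either this maximal-function domination or a genuine Orlicz Schur test based on Jensen's inequality with respect to the probability measure $K(z,\omega)h(\omega)dV_{\alpha}(\omega)/\int K(z,\cdot)h\,dV_{\alpha}$, which is a substantially different and more delicate argument than the one you sketch.
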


\begin{proof}
Since 
$$ | \mathcal{P}_{\alpha}(F)(z)| \leq \mathcal{P}_{\alpha}^{+}(|F|)(z), ~~ \forall~ z \in \mathbb{C}_{+}, ~~ \forall~F \in L^{\Phi}(\mathbb{C}_{+}, dV_{\alpha}),   $$
it will suffice to prove that $\mathcal{P}_{\alpha}^{+}$ is bounded from $L^{\Phi}(\mathbb{C}_{+}, dV_{\alpha})$ to $L^{\Phi}(\mathbb{C}_{+}, dV_{\alpha})$ 
\medskip

For $z_{0}=x_{0}+iy_{0} \in \mathbb{C}_{+}$ and $j \in \mathbb{N}$,
consider $I_{j}$ the interval centered $x_{0}$ with $|I_{j}|=2^{j+1}y_{0}$ and $Q_{I_{j}} $ its associated Carleson square. Put $$ E(Q_{I_{j}}):=Q_{I_{j}}\backslash Q_{I_{j-1}},~~\forall ~j\geq 1 ~~~\text{and}~~~ E(Q_{I_{0}})=Q_{I_{0}}.  $$
By construction, the $\{E(Q_{I_{j}})\}_{j\in \mathbb{N}}$ are pairwise disjoint and form a partition of $\mathbb{C}_{+}$. Moreover,
\begin{equation}\label{eq:ualp5rson}
|E(Q_{I_{j}})|_{\alpha} =\left(1-\frac{1}{2^{2+\alpha}}\right) |Q_{I_{j}}|_{\alpha},~~\forall ~j\geq 1 ~~~\text{and}~~~ |E(Q_{I_{0}})|_{\alpha}= |Q_{I_{0}}|_{\alpha}. \end{equation}
Also,
\begin{equation}\label{eq:ualp5n}
|\omega-\overline{z_{0}}|^{2+\alpha} \geq C_{\alpha}|Q_{I_{j}}|_{\alpha},~~\forall ~\omega \in E(Q_{I_{j}}),~~\forall ~j\geq 0.
\end{equation}
For $F \in L^{\Phi}(\mathbb{C}_{+}, dV_{\alpha})$, we have
\begin{align*}
\mathcal{P}_{\alpha}^{+}(|F|)(z_{0})&=\sum_{j\in \mathbb{N}} \int_{E(Q_{I_{j}})} \frac{|F(\omega)|}{|\omega-\overline{z_{0}}|^{2+\alpha}}dV_{\alpha}(\omega) \\
&\lesssim \sum_{j\in \mathbb{N}} \frac{1}{|Q_{I_{j}}|_{\alpha}}\int_{E(Q_{I_{j}})} |F(\omega)|dV_{\alpha}(\omega) \\
&\lesssim  \sum_{j\in \mathbb{N}} \left( \frac{1}{|Q_{I_{j}}|_{\alpha}}\int_{Q_{I_{j}}} |F(\omega)|dV_{\alpha}(\omega) \right)\chi_{Q_{I_{j}}}(z_{0}).
\end{align*}
Let $\Psi$ be the complementary function of  $\Phi$. Since  $\Phi \in \mathscr{U} \cap \nabla_{2}$, we have that   $\Psi \in \mathscr{U} \cap \nabla_{2}$. It follows that $\mathcal{M}_{\alpha}$ is bounded respectively on  $L^{\Phi}(\mathbb{C}_{+}, dV_{\alpha})$ and on  $L^{\Psi}(\mathbb{C}_{+}, dV_{\alpha})$, according to Proposition \ref{pro:mainplaqaqq6}.
Let  $G \in L^{\Psi}(\mathbb{C}_{+}, dV_{\alpha})$ such that $\|G\|_{L_{\alpha}^{\Psi}}^{lux} \leq 1$. We have 
\begin{align*}
\int_{\mathbb{C}_{+}}|\mathcal{P}_{\alpha}^{+}(F)(z)G(z)|dV_{\alpha}(z)&\lesssim \int_{\mathbb{C}_{+}}\sum_{j\in \mathbb{N}} \left( \frac{1}{|Q_{I_{j}}|_{\alpha}}\int_{Q_{I_{j}}} |F(\omega)|dV_{\alpha}(\omega) \right)\chi_{Q_{I_{j}}}(z)|G(z)|dV_{\alpha}(z)\\
&= \sum_{j\in \mathbb{N}} \left( \frac{1}{|Q_{I_{j}}|_{\alpha}}\int_{Q_{I_{j}}} |F(\omega)|dV_{\alpha}(\omega) \right)\left(\frac{1}{|Q_{I_{j}}|_{\alpha}}\int_{Q_{I_{j}}}|G(z)|dV_{\alpha}(z)\right)|Q_{I_{j}}|_{\alpha}\\
&\lesssim \sum_{j\in \mathbb{N}} \int_{E(Q_{I_{j}})}\left( \frac{1}{|Q_{I_{j}}|_{\alpha}}\int_{Q_{I_{j}}} |F(\omega)|dV_{\alpha}(\omega) \right)\left(\frac{1}{|Q_{I_{j}}|_{\alpha}}\int_{Q_{I_{j}}}|G(\omega)|dV_{\alpha}(\omega)\right)dV_{\alpha}(z)\\
&\lesssim \sum_{j\in \mathbb{N}} \int_{E(Q_{I_{j}})}\mathcal{M}_{\alpha}(F)(z)\mathcal{M}_{\alpha}(G)(z)dV_{\alpha}(z)\\
&=\int_{\mathbb{C}_{+}}\mathcal{M}_{\alpha}(F)(z)\mathcal{M}_{\alpha}(G)(z)dV_{\alpha}(z)\\
&\lesssim \|\mathcal{M}_{\alpha}(F)\|_{L_{\alpha}^{\Phi}}^{lux} \|\mathcal{M}_{\alpha}(G)\|_{L_{\alpha}^{\Psi}}^{lux} \\
&\lesssim \|F\|_{L_{\alpha}^{\Phi}}^{lux} \|G\|_{L_{\alpha}^{\Psi}}^{lux} 
\lesssim \|F\|_{L_{\alpha}^{\Phi}}^{lux}.
\end{align*}
\end{proof}

\begin{lemma}\label{pro:mainplaqzepama6}
Let   $\alpha>-1$ and   $\Phi \in  \mathscr{U}\cap \nabla_{2}$. For $F\in A^{\Phi}_{\alpha}(\mathbb{C_{+}})$, we have
$$ F(z)= \mathcal{P}_{\alpha}(F)(z), ~~ \forall~z \in \mathbb{C}_{+}.  $$
\end{lemma}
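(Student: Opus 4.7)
The plan is to transfer the reproducing identity from the classical Hilbert space $A^{2}_{\alpha}(\mathbb{C}_{+})$ to $A^{\Phi}_{\alpha}(\mathbb{C}_{+})$ by a density/continuity argument. The two main ingredients are the density established in Lemma~\ref{pro:main11aqaaqm8} and the $L^{\Phi}$-boundedness of the Bergman projection provided by Theorem~\ref{pro:mainplaqaaqaaqq6}.

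First, I would invoke Lemma~\ref{pro:main11aqaaqm8} with $\Phi_{1}=\Phi$, $\Phi_{2}(t)=t^{2}\in\mathscr{U}$ and $\beta=\alpha$. Choosing the exponent $m$ in the auxiliary factor $G_{\varepsilon,m}$ so that $2m>\alpha+2$ guarantees that the approximants
$$ F^{(\varepsilon)}(z) = G_{\varepsilon,m}(z)\,F(z+i\varepsilon) $$
defined by (\ref{eq:inaliedehay}) actually lie in $A^{\Phi}_{\alpha}(\mathbb{C}_{+})\cap A^{2}_{\alpha}(\mathbb{C}_{+})$, and satisfy $\|F^{(\varepsilon)}-F\|_{A^{\Phi}_{\alpha}}^{lux}\to 0$ as $\varepsilon\to 0$. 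Since $G_{\varepsilon,m}(z)\to 1$ pointwise and $F$ is continuous on $\mathbb{C}_{+}$, one also has $F^{(\varepsilon)}(z)\to F(z)$ at every $z\in\mathbb{C}_{+}$.

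Next, on the classical weighted Bergman space $A^{2}_{\alpha}(\mathbb{C}_{+})$, the standard reproducing property of the Bergman kernel yields $F^{(\varepsilon)}(z)=\mathcal{P}_{\alpha}(F^{(\varepsilon)})(z)$ for every $z\in\mathbb{C}_{+}$. By Theorem~\ref{pro:mainplaqaaqaaqq6}, the projection $\mathcal{P}_{\alpha}:L^{\Phi}(\mathbb{C}_{+},dV_{\alpha})\to A^{\Phi}_{\alpha}(\mathbb{C}_{+})$ is bounded, so
$$ \|\mathcal{P}_{\alpha}(F^{(\varepsilon)})-\mathcal{P}_{\alpha}(F)\|_{A^{\Phi}_{\alpha}}^{lux}\lesssim \|F^{(\varepsilon)}-F\|_{A^{\Phi}_{\alpha}}^{lux}\xrightarrow[\varepsilon\to 0]{} 0. $$
By Proposition~\ref{pro:main1aq1apaq8}, norm convergence in $A^{\Phi}_{\alpha}(\mathbb{C}_{+})$ implies pointwise convergence (in fact uniformly on compact subsets of $\mathbb{C}_{+}$), hence $\mathcal{P}_{\alpha}(F^{(\varepsilon)})(z)\to \mathcal{P}_{\alpha}(F)(z)$ for every $z\in\mathbb{C}_{+}$. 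Combining the two convergences at a fixed $z$,
$$ F(z)=\lim_{\varepsilon\to 0}F^{(\varepsilon)}(z)=\lim_{\varepsilon\to 0}\mathcal{P}_{\alpha}(F^{(\varepsilon)})(z)=\mathcal{P}_{\alpha}(F)(z). $$

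The only delicate point is the $A^{2}_{\alpha}$-reproducing identity itself: although classical, it depends on the exact normalization of $K_{\alpha}$ and may require carrying an implicit constant (of the same nature as the $c_{\alpha}$ appearing in Theorem~\ref{pro:mainqaaq6}). Alternatively, one may verify it by a direct computation using Lemmas~\ref{pro:main26} and~\ref{pro:main27}, or by transferring it from the unit disk via the Cayley transform; all other steps are routine applications of the results collected in Section~2.
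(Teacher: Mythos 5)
Your proof is correct and follows essentially the same route as the paper: both approximate $F$ by elements of $A^{2}_{\alpha}(\mathbb{C}_{+})\cap A^{\Phi}_{\alpha}(\mathbb{C}_{+})$ via Lemma \ref{pro:main11aqaaqm8}, apply the classical reproducing identity there, and pass to the limit pointwise. The only (immaterial) difference is in how the convergence $\mathcal{P}_{\alpha}(F_{n})(z)\to\mathcal{P}_{\alpha}(F)(z)$ is justified: the paper bounds $|\mathcal{P}_{\alpha}(F-F_{n})(z)|$ directly by H\"older's inequality using $K_{\alpha}(z,\cdot)\in A^{\Psi}_{\alpha}(\mathbb{C}_{+})$, whereas you combine the $L^{\Phi}$-boundedness of $\mathcal{P}_{\alpha}$ from Theorem \ref{pro:mainplaqaaqaaqq6} with the pointwise evaluation estimate of Proposition \ref{pro:main1aq1apaq8}.
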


\begin{proof}
For $F\in A^{\Phi}_{\alpha}(\mathbb{C_{+}})$, there exists a sequence $\{F_{n}\}_{n}$ in $A^{2}_{\alpha}(\mathbb{C_{+}}) \cap A^{\Phi}_{\alpha}(\mathbb{C_{+}})$ such that $$ \lim_{n \to \infty}\|F_{n}-F\|_{A_{\alpha}^{\Phi}}^{lux}=0,    $$
according to Lemma \ref{pro:main11aqaaqm8}. For $n \geq 1$, since $F_{n} \in A^{2}_{\alpha}(\mathbb{C_{+}})$,  we have 
$$  F_{n}(z)=\mathcal{P}_{\alpha}(F_{n})(z) , ~~ \forall~z \in \mathbb{C}_{+}.    $$
For $z \in \mathbb{C}_{+}$, we have 
\begin{align*}
|F(z)-\mathcal{P}_{\alpha}(F)(z)|&= |F(z)-F_{n}(z)+\mathcal{P}_{\alpha}(F_{n})(z)-\mathcal{P}_{\alpha}(F)(z)| \\
&\leq |F(z)-F_{n}(z)| + |\mathcal{P}_{\alpha}(F-F_{n})(z)| \\
&\leq |F(z)-F_{n}(z)| + 2\|K_{\alpha}(z,.)\|_{A_{\alpha}^{\Psi}}^{lux} \|F-F_{n}\|_{A_{\alpha}^{\Phi}}^{lux}, 
\end{align*}
where  $\Psi$ is the complementary function of $\Phi$. Taking the limit when $n\to\infty$, we deduce that 
$ F(z)=\mathcal{P}_{\alpha}(F)(z)$.
\end{proof}

\begin{theorem}\label{pro:main 5aqk3pl}
Let   $\alpha>-1$ and $\Phi \in \mathscr{U} \cap \nabla_{2}$.  The topological dual   $\left(A_{\alpha}^{\Phi}(\mathbb{C}_{+})\right)^{*}$ of $A_{\alpha}^{\Phi}(\mathbb{C}_{+})$     is isomorphic to $A_{\alpha}^{\Psi}(\mathbb{C}_{+})$, in the sense that, for all  $T\in \left(A_{\alpha}^{\Phi}(\mathbb{C}_{+})\right)^{*}$, there is a unique $G\in A_{\alpha}^{\Psi}(\mathbb{C}_{+})$ such that
 $$ T(F) = T_{G}(F):= \int_{0}^{\infty}\left(\int_{\mathbb{R}}F(x+iy)\overline{G(x+iy)}dx\right)y^{\alpha}dy, ~~ \forall~F\in A_{\alpha}^{\Phi}(\mathbb{C}_{+}), 
   $$
where $\Psi$ is the complementary function of $\Phi$.
\end{theorem}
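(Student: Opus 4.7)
The plan is to prove the two natural inclusions: every $G\in A_\alpha^\Psi$ induces a bounded functional $T_G$ on $A_\alpha^\Phi$, and conversely every $T\in (A_\alpha^\Phi)^*$ is of the form $T_G$ for a unique such $G$. Throughout I will use the fact that since $\Phi\in\mathscr{U}\cap\nabla_2$, the complementary function $\Psi$ also lies in $\mathscr{U}\cap\nabla_2$ (the $\nabla_2$ condition is symmetric in $\Phi$ and $\Psi$, and $\Psi$ inherits the $\mathscr{U}$-property from the lower indices of $\Phi$). In particular, Theorem~\ref{pro:mainplaqaaqaaqq6} applies to both $\Phi$ and $\Psi$, and Lemma~\ref{pro:mainplaqzepama6} gives the reproducing formula $F=\mathcal{P}_\alpha(F)$ for $F\in A_\alpha^\Phi$.

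For the easy direction, given $G\in A_\alpha^\Psi$, Orlicz--H\"older's inequality yields
$$
|T_G(F)|\leq \int_{\mathbb{C}_+}|F(z)||G(z)|\,dV_\alpha(z)\lesssim \|F\|_{A_\alpha^\Phi}^{lux}\|G\|_{A_\alpha^\Psi}^{lux},
$$
so $T_G\in (A_\alpha^\Phi)^*$ with $\|T_G\|\lesssim \|G\|_{A_\alpha^\Psi}^{lux}$.

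For the converse, take $T\in (A_\alpha^\Phi)^*$. Since $A_\alpha^\Phi$ sits isometrically inside $L^\Phi(\mathbb{C}_+,dV_\alpha)$ (Lemma~\ref{pro:main12amq0}), the Hahn--Banach theorem extends $T$ to a bounded functional $\widetilde T$ on $L^\Phi(\mathbb{C}_+,dV_\alpha)$. Because $\Phi\in \Delta_2$, the classical Orlicz duality identifies $\widetilde T$ with some $H\in L^\Psi(\mathbb{C}_+,dV_\alpha)$ via $\widetilde T(f)=\int f\overline{H}\,dV_\alpha$. Now for $F\in A_\alpha^\Phi$, write $F=\mathcal{P}_\alpha(F)$ and expand:
$$
T(F)=\int_{\mathbb{C}_+}\mathcal{P}_\alpha(F)(z)\overline{H(z)}\,dV_\alpha(z)
=\int_{\mathbb{C}_+}\!\!\int_{\mathbb{C}_+}K_\alpha(z,w)F(w)\,dV_\alpha(w)\,\overline{H(z)}\,dV_\alpha(z).
$$
To justify Fubini, I estimate the double integral against the positive Bergman operator: by H\"older and Theorem~\ref{pro:mainplaqaaqaaqq6}(ii),
$$
\int_{\mathbb{C}_+}\mathcal{P}_\alpha^+(|F|)(z)|H(z)|\,dV_\alpha(z)\lesssim \|\mathcal{P}_\alpha^+(|F|)\|_{L_\alpha^\Phi}^{lux}\|H\|_{L_\alpha^\Psi}^{lux}\lesssim \|F\|_{A_\alpha^\Phi}^{lux}\|H\|_{L_\alpha^\Psi}^{lux}<\infty,
$$
so Fubini applies. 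Using $\overline{K_\alpha(z,w)}=K_\alpha(w,z)$, the integral rearranges to
$$
T(F)=\int_{\mathbb{C}_+}F(w)\overline{\mathcal{P}_\alpha(H)(w)}\,dV_\alpha(w)=T_G(F),\qquad G:=\mathcal{P}_\alpha(H).
$$
Theorem~\ref{pro:mainplaqaaqaaqq6}(i) applied to $\Psi$ guarantees that $G\in A_\alpha^\Psi$ with $\|G\|_{A_\alpha^\Psi}^{lux}\lesssim \|H\|_{L_\alpha^\Psi}^{lux}\lesssim \|T\|$.

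Uniqueness follows by testing against Bergman kernels: if $T_G\equiv 0$ on $A_\alpha^\Phi$, then, since $K_\alpha(\cdot,w)\in A_\alpha^\Phi$ by Proposition~\ref{pro:mainplaq6}, we get $\mathcal{P}_\alpha(G)(w)=0$ for every $w\in\mathbb{C}_+$; applying Lemma~\ref{pro:mainplaqzepama6} to $G\in A_\alpha^\Psi$ gives $G\equiv 0$. The main technical obstacles are verifying that $\Psi$ indeed lies in $\mathscr{U}\cap\nabla_2$ whenever $\Phi$ does (so that the Bergman projection machinery applies in both scales), and justifying the Fubini interchange above; both are handled by the positive Bergman operator bound from Theorem~\ref{pro:mainplaqaaqaaqq6}(ii), which is the crucial ingredient that makes this duality argument go through.
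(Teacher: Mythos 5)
Your proof is correct and follows essentially the same route as the paper: Hahn--Banach extension to $L^{\Phi}(\mathbb{C}_{+},dV_{\alpha})$, the Orlicz duality $\left(L^{\Phi}\right)^{*}=L^{\Psi}$, and then the self-adjointness of $\mathcal{P}_{\alpha}$ together with the reproducing formula $F=\mathcal{P}_{\alpha}(F)$ to replace the $L^{\Psi}$-representative by its Bergman projection. You in fact supply details the paper leaves implicit --- the Fubini justification via the boundedness of $\mathcal{P}_{\alpha}^{+}$ and the uniqueness argument by testing against reproducing kernels --- so nothing is missing.
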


\begin{proof}
Let  $T\in \left(A_{\alpha}^{\Phi}(\mathbb{C}_{+})\right)^{*}$. By Hahn Banach's theorem, $T$ extends to a bounded linear functional on $L^{\Phi}(\mathbb{C}_{+},dV_{\alpha})$
of the same norm. Since $\left(L^{\Phi}(\mathbb{C}_{+},dV_{\alpha})\right)^{*}=L^{\Psi}(\mathbb{C}_{+},dV_{\alpha})$, there exists a unique function $\widetilde{G}\in L^{\Psi}(\mathbb{C}_{+},dV_{\alpha})$ such that for $F\in A_{\alpha}^{\Phi}(\mathbb{C}_{+})$, 
 $$  T(F)= \int_{0}^{\infty}\left(\int_{\mathbb{R}}F(x+iy)\overline{\widetilde{G}(x+iy)}dx\right)y^{\alpha}dy     $$
and  $\|T\|_{(L_{\alpha}^{\Phi})^{*}}= \|\widetilde{G}\|_{L_{\alpha}^{\Psi}}$, (see \cite{raoren}).
Since $\mathcal{P}_{\alpha}$  is self-adjoint and $F=\mathcal{P}_{\alpha}(F)$, it follows that 
 $$  T(F)= \int_{0}^{\infty}\left(\int_{\mathbb{R}}\mathcal{P}_{\alpha}(F)(x+iy)\overline{\widetilde{G}(x+iy)}dx\right)y^{\alpha}dy = \int_{0}^{\infty}\left(\int_{\mathbb{R}}F(x+iy)\overline{\mathcal{P}_{\alpha}(\widetilde{G})(x+iy)}dx\right)y^{\alpha}dy,   $$
with $\mathcal{P}_{\alpha}(\widetilde{G}) \in A_{\alpha}^{\Psi}(\mathbb{C}_{+})$. Therefore, 
$   T=T_{G}$, with $G=\mathcal{P}_{\alpha}(\widetilde{G})$.
\end{proof}

\subsection{Orlicz space of sequences on  $\mathbb{C}_{+}$.}

Let $0<\delta< 1$ and   $\gamma$ the real number defined in (\ref{eq:uaa5aqn}). For $l,j\in \mathbb{Z}$, put 
\begin{align*}
I_{l,j}&:= \left] \frac{\delta^{2}}{4}\left(-1+\frac{l}{2} \right)2^{\gamma j};  \frac{\delta^{2}}{4}\left(1+\frac{l}{2} \right)2^{\gamma j}   \right[ \\
I_{l,j}'&:= \left] \frac{\delta^{2}}{4}\left(-\frac{1}{5}+\frac{l}{2} \right)2^{\gamma j};  \frac{\delta^{2}}{4}\left(\frac{1}{5}+\frac{l}{2} \right)2^{\gamma j}   \right[ \\
J_{j}&:= \left] \left(1-\frac{\delta^{2}}{4} \right)2^{\gamma j};  \left(1+\frac{\delta^{2}}{4} \right)2^{\gamma j}   \right[ \\
J_{j}'&:= \left] \left(1-\frac{\delta^{2}}{20} \right)^{1/4}2^{\gamma j};  \left(1+\frac{\delta^{2}}{20} \right)^{1/4}2^{\gamma j}   \right[. 
\end{align*}
We have
\begin{equation}\label{eq:5aaqn}
 |J_{j}|=|I_{l,j}|=\frac{\delta^{2}}{2}y_{l,j}, \hspace*{0.5cm} |I_{l,j}'|=\frac{\delta^{2}}{10}y_{l,j} \hspace*{0.5cm}\textrm{and} \hspace*{0.5cm} |J_{j}'|= C_{\delta}y_{l,j},
\end{equation}
where $C_{\delta}$ is a constant which depends only on $\delta$.

\medskip

For $y>0$ and $x \in \mathbb{R}$,  and  $j\in \mathbb{Z}$, put 
\begin{equation}\label{eq:5an}
L=L(y):=\{ j\in \mathbb{Z}: y \in J_{j}   \} \hspace*{0.5cm}\textrm{and} \hspace*{0.5cm} L_{j}=L_{j}(x):=\{ l\in \mathbb{Z}: x \in I_{j,l}   \}.
\end{equation}
We have 
\begin{equation}\label{eq:inpmpy}
\text{Card} (L_{j}(x)) \leq 4 \hspace*{1cm}\textrm{and} \hspace*{1cm} \text{Card} (L(y)) \leq N_{\delta},
\end{equation}
where  $\text{Card}(A)$ is the number of elements contained in the set $A$ and 
$N_{\delta} \geq 1$ is an integer  which depends only on $\delta$. The following properties hold.
 \begin{itemize}
 \item[(i)] $] 0; \infty [ =\cup_{j\in \mathbb{Z}}J_{j}$ and   $\mathbb{R}=\cup_{l\in \mathbb{Z}}I_{l,j}$, for $j\in \mathbb{Z}$ fixed;
 \item[(ii)] For  $j\in \mathbb{Z}$, $J_{j}' \subset J_{j}$ and    $J_{j+1}'\cap J_{j}'=\emptyset$; 
 \item[(iii)] For fixed  $j\in \mathbb{Z}$,  $I_{j,l}' \subset I_{j,l}$ and  $I_{j,l}'\cap I_{j,l+1}'=\emptyset$, for all  $l\in \mathbb{Z}$;
 \item[(iv)] For fixed  $j\in \mathbb{Z}$, each element  $x\in \mathbb{R}$ is in at most four intervals $I_{l,j}$;
 \item[(v)] There exists an integer $N$ such that any $y \in ] 0; \infty [$ is in at most $N$ intervals $J_{j}$.
 \end{itemize}

For $0<\delta< 1$, put 
 \begin{equation}\label{eq:uaa5n}
  s_{\delta}:=-1+ \left( 1+\delta^{2}/20\right)^{1/4}.
 \end{equation}
 We also have the following properties:
 \begin{itemize}
 \item[(a)] $\mathbb{C}_{+}=\cup_{l,j\in \mathbb{Z}} \mathcal{D}_{\delta}(z_{l,j})$;
\item[(b)] For  $l,j\in \mathbb{Z}$, $\mathcal{D}_{s_{\delta}}(z_{l,j}) \subset \mathcal{D}_{\delta}(z_{l,j})$ and $\mathcal{D}_{s_{\delta}}(z_{l,j}) \cap \mathcal{D}_{s_{\delta}}(z_{k,p}) =\emptyset$, for all $(l,j)\not=(k,p)$;
 \item[(c)] There is an integer  $N$ (depending only on $\delta$) such that each point of
 $\mathbb{C}_{+}$ belongs to at most $N$ of the disks $\mathcal{D}_{\delta}(z_{l,j})$.
\end{itemize}

\begin{lemma}\label{pro:mainqaqa6}
Let $\alpha>-1$,  $0<\delta<1$,    $\Phi \in \mathscr{L}\cup\mathscr{U}$ and  $\{z_{l,j}\}_{l,j\in \mathbb{Z}}$, the sequence defined in (\ref{eq:ua5n}).  For  $F\in A_{\alpha}^{\Phi}(\mathbb{C}_{+})$,  the sequence $\{F(z_{l,j})\}_{l,j\in \mathbb{Z}}$ belongs to $\ell^{\Phi}_{\alpha}(\mathbb{C_{+}})$.  Moreover,
\begin{equation}\label{eq:inaqqay}
\left\|\{F(z_{l,j})\}_{l,j\in \mathbb{Z}}\right\|_{\ell^{\Phi}_{\alpha}}^{lux}\leq C_{1} \|F\|_{A^{\Phi}_{\alpha}}^{lux}. 
 \end{equation}
Conversely, for $\delta$  small enough, we have 
\begin{equation}\label{eq:inay}
 \|F\|_{A^{\Phi}_{\alpha}}^{lux}\leq C_{2}\left\|\{F(z_{l,j})\}_{l,j\in \mathbb{Z}}\right\|_{\ell^{\Phi}_{\alpha}}^{lux},
 \end{equation}
where $C_{1}$ and $C_{2}$ are constants which depend only on $\Phi$, $\delta$ and $\alpha$.
\end{lemma}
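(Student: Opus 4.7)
My plan is to establish both inequalities at the level of the modular quantities
$$\int_{\mathbb{C}_{+}}\Phi(|F|)\,dV_\alpha \quad\text{and}\quad \sum_{l,j\in\mathbb{Z}}\Phi(|F(z_{l,j})|)\,2^{j\gamma(\alpha+2)},$$
and only then pass to Luxemburg norms by homogeneity and the type inequalities (\ref{eq:inegehay})--(\ref{eq:inmehay}), which are available because $\Phi\in\mathscr{L}\cup\mathscr{U}$ has $0<a_\Phi\leq b_\Phi<\infty$. Both directions hinge on the covering/disjointness/finite overlap properties (a), (b), (c) of the disks $\mathcal{D}_\delta(z_{l,j})$ and on the local estimates of Proposition~\ref{pro:main1q8}.

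For the easy direction (\ref{eq:inaqqay}), I would fix an admissible $s\in(0,1)$ and apply (\ref{eq:inehay}) at $z=\omega=z_{l,j}$. Since $\mathrm{Im}(z_{l,j})=2^{\gamma j}$ and $|\mathcal{D}_{s'}(z_{l,j})|_\alpha\sim 2^{j\gamma(\alpha+2)}$, this reads
$$\Phi(|F(z_{l,j})|)\,2^{j\gamma(\alpha+2)}\lesssim \int_{\mathcal{D}_{s'}(z_{l,j})}\Phi(|F|)\,dV_\alpha.$$
Summing over $l,j$ and using the finite overlap property (c), the right-hand side is controlled by $\int_{\mathbb{C}_{+}}\Phi(|F|)\,dV_\alpha$; scaling $F$ by its Luxemburg norm then yields (\ref{eq:inaqqay}).

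For (\ref{eq:inay}) I would cover $\mathbb{C}_{+}$ by $\{\mathcal{D}_\delta(z_{l,j})\}$ and, on each disk, decompose
$$|F(z)|\leq |F(z)-F(z_{l,j})|+|F(z_{l,j})|.$$
Since any $\Phi\in\mathscr{L}\cup\mathscr{U}$ is doubling, $\Phi(|F(z)|)\lesssim \Phi(|F(z)-F(z_{l,j})|)+\Phi(|F(z_{l,j})|)$. Integrating the second summand produces exactly the discrete side, using $|\mathcal{D}_\delta(z_{l,j})|_\alpha\sim 2^{j\gamma(\alpha+2)}$. For the oscillation term, I would apply Cauchy's formula to $F'$ on a circle of radius $r\sim 2^{\gamma j}$ centred at a point of the segment $[z,z_{l,j}]$, and then use Proposition~\ref{pro:main1q8}(i) to bound $|F|$ pointwise on that circle by $\Phi^{-1}$ of a local mean; since $|z-z_{l,j}|\leq \delta\cdot 2^{\gamma j}$, the usual mean value estimate then gives
$$|F(z)-F(z_{l,j})|\lesssim \delta\,\Phi^{-1}\!\left(\frac{C}{2^{j\gamma(\alpha+2)}}\int_{\widetilde{\mathcal{D}}(z_{l,j})}\Phi(|F|)\,dV_\alpha\right)$$
for a slightly enlarged disk $\widetilde{\mathcal{D}}(z_{l,j})$. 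Because $\Phi$ has strictly positive lower type, $\Phi(\delta t)\lesssim \delta^{q}\Phi(t)$ for $\delta\leq 1$ and some $q>0$; integrating, summing, and invoking finite overlap,
$$\sum_{l,j}\int_{\mathcal{D}_\delta(z_{l,j})}\Phi(|F-F(z_{l,j})|)\,dV_\alpha\lesssim \delta^{q}\int_{\mathbb{C}_{+}}\Phi(|F|)\,dV_\alpha.$$
Combining the two pieces yields
$$\int_{\mathbb{C}_{+}}\Phi(|F|)\,dV_\alpha\leq C\delta^{q}\!\int_{\mathbb{C}_{+}}\Phi(|F|)\,dV_\alpha+C\sum_{l,j}\Phi(|F(z_{l,j})|)\,2^{j\gamma(\alpha+2)},$$
and choosing $\delta$ so small that $C\delta^{q}<1/2$ permits absorbing the first term into the left side, from which (\ref{eq:inay}) follows after the standard rescaling.

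I expect the main obstacle to be the generation of the $\delta^{q}$ factor in the oscillation estimate: a direct use of (\ref{eq:inegqhay}) yields a constant independent of $\delta$ that cannot be absorbed. The gain must come from the Lipschitz-type estimate $|F(z)-F(z_{l,j})|\leq |z-z_{l,j}|\sup_\xi|F'(\xi)|$, the supremum being handled by Cauchy on a \emph{full-scale} circle of radius $\sim 2^{\gamma j}$ rather than the tiny radius $\delta\cdot 2^{\gamma j}$ — this is precisely where the hypothesis that $\delta$ be small enough enters, both to keep the enlarged circles inside $\mathbb{C}_{+}$ and to make the absorption legitimate.
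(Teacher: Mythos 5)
Your proposal is correct and shares the paper's overall skeleton: subharmonicity plus Jensen for (\ref{eq:inaqqay}), and for (\ref{eq:inay}) a splitting of $\Phi(|F|)$ into a lattice-value term and an oscillation term, with the oscillation term absorbed into the left-hand side once $\delta$ is small. The only genuine divergence is the mechanism producing the $\delta$-gain. The paper works over the rectangles $I_{l,j}\times J_j$, applies (\ref{eq:inegqhay}) directly, and extracts the smallness \emph{after} integration: the inner domain $\{|x-u|<s_2y,\ |y-v|<s_2y\}$ together with the weight $v^{-2}$ and the disjointness of the shrunk rectangles $I'_{l,j}\times J'_j$ yields a factor $s_2^2\sim\delta^2$ times $\int_0^\infty\|F(\cdot+iy)\|_{L^\Phi}y^\alpha dy$, using the monotonicity of $v\mapsto\|F(\cdot+iv)\|_{L^\Phi}$. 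You instead build the gain into the pointwise estimate via $|F(z)-F(z_{l,j})|\leq|z-z_{l,j}|\sup|F'|$, a full-scale Cauchy estimate, and the lower type of $\Phi$ (note $a_\Phi>0$ even for $\Phi\in\mathscr{U}$, since $\Phi(t)/t$ is nondecreasing), converting $\delta$ into $\delta^{a_\Phi}$. Both are legitimate; yours avoids the monotonicity-in-$y$ lemma, while the paper's avoids differentiating $F$ a second time beyond what is already in Proposition \ref{pro:main1q8}(ii).

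Two remarks. First, your parenthetical claim that ``a direct use of (\ref{eq:inegqhay}) yields a constant independent of $\delta$ that cannot be absorbed'' is a mis-diagnosis: the constant $C_2$ in (\ref{eq:inegqhay}) is indeed $\delta$-independent, but the integral on its right-hand side is itself $O(\delta^2)$ times a local average of $\Phi(|F|)$, which is exactly how the paper closes the absorption. Second, in your route you should track the overlap constants carefully: the enlarged full-scale disks $\widetilde{\mathcal{D}}(z_{l,j})$ have overlap of order $\delta^{-2}$ (not the $N$ of property (c)), so the summation costs $\delta^{-2}$; this is compensated precisely by the factor $|\mathcal{D}_\delta(z_{l,j})|_\alpha\sim\delta^2\,2^{j\gamma(\alpha+2)}$ coming from integrating your constant pointwise bound over $\mathcal{D}_\delta(z_{l,j})$, leaving a net gain $\delta^{a_\Phi}$, which still tends to zero. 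With that bookkeeping made explicit, your argument is complete.
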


\begin{proof}
For  $0\not\equiv F \in A^{\Phi}_{\alpha}(\mathbb{C_{+}})$ and     $j,l\in \mathbb{Z}$, we have 
 $$  \Phi\left(\frac{|F(z_{l,j})|}{\|F\|_{A^{\Phi}_{\alpha}}^{lux}}\right)\lesssim \frac{1}{\left( \mathrm{Im}( z_{l,j})\right)^{2+\alpha}} \int_{\mathcal{D}_{s_{\delta}}\left(z_{l,j}\right)}\Phi\left(\frac{|F(u+iv)|}{\|F\|_{A^{\Phi}_{\alpha}}^{lux}}\right)dV_{\alpha}(u+iv),   $$
where $s_{\delta}$ is  the constant defined in (\ref{eq:uaa5n}), thanks to Proposition \ref{pro:main1q8}.
Since $\{\mathcal{D}_{s_{\delta}}\left(z_{l,j}\right)\}_{l,j\in \mathbb{Z}}$ is a sequence of pairwise disjoint sets, 
it follows that
\begin{align*}
\sum_{j\in \mathbb{Z}}\sum_{l\in \mathbb{Z}}\Phi\left(\frac{|F(z_{l,j})|}{\|F\|_{A^{\Phi}_{\alpha}}^{lux}}\right)2^{j\gamma (\alpha +2)}
&\lesssim  \sum_{j\in \mathbb{Z}}\sum_{l\in \mathbb{Z}} \int_{\mathcal{D}_{s_{\delta}}\left(z_{l,j}\right)}\Phi\left(\frac{|F(u+iv)|}{\|F\|_{A^{\Phi}_{\alpha}}^{lux}}\right)dV_{\alpha}(u+iv) \\
&\lesssim \int_{\mathbb{C_{+}}}\Phi\left(\frac{|F(u+iv)|}{\|F\|_{A^{\Phi}_{\alpha}}^{lux}}\right)dV_{\alpha}(u+iv) 
\lesssim  1.
\end{align*}
Therefore,  $\{F(z_{l,j})\}_{l,j\in \mathbb{Z}}$ belongs to $\ell^{\Phi}_{\alpha}(\mathbb{C_{+}})$ and 
$$ \left\|\{F(z_{l,j})\}_{l,j\in \mathbb{Z}}\right\|_{\ell^{\Phi}_{\alpha}}^{lux}\lesssim \|F\|_{A^{\Phi}_{\alpha}}^{lux}. 
  $$
Now prove the inverse inequality. 

\medskip

We can assume that $\left\|\{F(z_{l,j})\}_{l,j\in \mathbb{Z}}\right\|_{\ell^{\Phi}_{\alpha}}^{lux}=1$. We have
\begin{align*}
\int_{0}^{+\infty}\int_{-\infty}^{+\infty}\Phi\left( |F(x+iy)|\right)y^{\alpha}dxdy 
&\leq K_{\Phi}\sum_{j\in \mathbb{Z}}\sum_{l\in \mathbb{Z}}\int_{J_{j}}\int_{I_{l,j}}\Phi\left( |F(z_{l,j})|  \right)y^{\alpha}dxdy \\
&+ K_{\Phi}\sum_{j\in \mathbb{Z}}\sum_{l\in \mathbb{Z}}\int_{J_{j}}\int_{I_{l,j}}\Phi\left( |F(x+iy)-F(z_{l,j})|  \right)y^{\alpha}dxdy,
\end{align*}
where $K_{\Phi}$ is a constant that depends only on $\Phi$. Put
 $$  I:=\sum_{j\in \mathbb{Z}}\sum_{l\in \mathbb{Z}}\int_{J_{j}}\int_{I_{l,j}}\Phi\left( |F(z_{l,j})|  \right)y^{\alpha}dxdy  $$
and 
$$  II:=\sum_{j\in \mathbb{Z}}\sum_{l\in \mathbb{Z}}\int_{J_{j}}\int_{I_{l,j}}\Phi\left( |F(x+iy)-F(z_{l,j})|  \right)y^{\alpha}dxdy.   $$

\medskip

Let us give an estimate of $I$. We have
\begin{align*}
\sum_{j\in \mathbb{Z}}\sum_{l\in \mathbb{Z}}\int_{J_{j}}\int_{I_{l,j}}\Phi\left( |F(z_{l,j})|  \right)y^{\alpha}dxdy  
&=\sum_{j\in \mathbb{Z}}\sum_{l\in \mathbb{Z}}\Phi\left( |F(z_{l,j})|  \right)\int_{J_{j}}\int_{I_{l,j}}y^{\alpha}dxdy \\
&\lesssim \sum_{j\in \mathbb{Z}}\sum_{l\in \mathbb{Z}}\Phi\left( |F(z_{l,j})| \right)2^{\gamma j(\alpha+2)} \lesssim  1.
\end{align*}
We deduce that
\begin{equation}\label{eq:inqay}
I:=\sum_{j\in \mathbb{Z}}\sum_{l\in \mathbb{Z}}\int_{J_{j}}\int_{I_{l,j}}\Phi\left( |F(z_{l,j})|  \right)y^{\alpha}dxdy\lesssim 1.
\end{equation}

\medskip

Let us also give an estimate of $II$. The objective is to prove that 
\begin{equation}\label{eq:inqaaqy}
\sum_{j\in \mathbb{Z}}\sum_{l\in \mathbb{Z}}\int_{J_{j}}\int_{I_{l,j}}\Phi\left( |F(x+iy)-F(z_{l,j})|  \right)y^{\alpha}dxdy \leq C \int_{0}^{+\infty}\int_{-\infty}^{+\infty}\Phi\left( |F(x+iy)|\right)y^{\alpha}dxdy,  
\end{equation}
where $C$ is a constant such that   $0<C <1$.

\medskip

For $0<\delta <1$ and  $j \in \mathbb{Z}$, we have 
\begin{align*}
\int_{J_{j}'}y^{\alpha}dy&=\frac{2^{\gamma j(\alpha+1)}}{\alpha+1}\left(1+\delta^{2}/20 \right)^{(\alpha+1)/4}\left( 1- \left( \frac{1-\delta^{2}/20}{1+\delta^{2}/20}  \right)^{(\alpha+1)/4}       \right) \\
&\geq \frac{2^{\gamma j(\alpha+1)}}{\alpha+1}\left( 1- \left( \frac{1-\delta^{2}/20}{1+\delta^{2}/20}  \right)^{(\alpha+1)/4}       \right)
\end{align*}
and 
\begin{align*}
\int_{J_{j}}y^{\alpha}dy&=\frac{2^{\gamma j(\alpha+1)}}{\alpha+1}\left(1+\delta^{2}/4 \right)^{\alpha+1}\left( 1- \left( \frac{1-\delta^{2}/4}{1+\delta^{2}/4}  \right)^{\alpha+1}       \right) \\
&\leq\frac{2^{\gamma j(\alpha+1)}}{\alpha+1}\times 2^{\alpha+1}\left( 1- \left( \frac{1-\delta^{2}/4}{1+\delta^{2}/4}  \right)^{\alpha+1}       \right). 
\end{align*}
We deduce that
\begin{equation}\label{eq:inqaaaqy}
\int_{J_{j}}y^{\alpha}dy \leq  C_{1} \int_{J_{j}'}y^{\alpha}dy,
\end{equation}
where 
$$ C_{1}:= 2^{\alpha+1}\left( 1- \left( \frac{1-\delta^{2}/4}{1+\delta^{2}/4}  \right)^{\alpha+1}       \right)\left( 1- \left( \frac{1-\delta^{2}/20}{1+\delta^{2}/20}  \right)^{(\alpha+1)/4}       \right)^{-1}.
   $$
We also have
\begin{equation}\label{eq:inqaaqaqy}
|I_{l,j}|=\frac{\delta^{2}}{2}y_{l,j}=5|I_{l,j}'|,
\end{equation}
for all  $l,j \in \mathbb{Z}$, thanks to relation (\ref{eq:5aaqn}).  

\medskip

Let  $0<\delta<1$ such that $(1+7\sqrt{2})\delta<1$. Put 
$$ s_{1}:=\delta\left[  \frac{1}{\sqrt{2}}\left(1- \delta \right)  \right]^{-1} \hspace*{0.5cm}\textrm{and} \hspace*{0.5cm}  s_{2}:=3s_{1}\left[  \frac{1}{\sqrt{2}}\left(1- s_{1} \right)  \right]^{-1}. $$
Since  $(1+7\sqrt{2})\delta<1$, we deduce that  $\delta<1/4$, $(1+\sqrt{2})\delta<1$ and $0<s_{1}, s_{2}<1$. Let  $l,j \in \mathbb{Z}$. For  $x \in I_{j,l}$ and $y \in J_{j}$,  we have 
\begin{align*}
|x+iy- z_{l,j}| &\leq |x- x_{l,j}|+|y- y_{l,j}| 
\leq | I_{l,j}|+| J_{j}|  \\
&= \frac{\delta^{2}}{2}y_{l,j}+ \frac{\delta^{2}}{2}y_{l,j}= \delta^{2}y_{l,j} 
<\frac{\delta}{4}y_{l,j}, 
\end{align*}
thanks to relation (\ref{eq:espeqs1}). We deduce that,  $x+iy \in \mathcal{D}_{s_{1}}(z_{l,j})$, since $\delta<s_{1}$. It follows that, 
$$  \left[  \frac{1}{\sqrt{2}}\left(1- s_{1} \right)  \right] y < y_{l,j} < \left[  \frac{1}{\sqrt{2}}\left(1- s_{1} \right)  \right]^{-1} y,$$
thanks to Relation (\ref{eq:espqeqs1}). Therefore, the disk $\mathcal{D}_{s_{1}}(z_{l,j})$ is contained in disk $\mathcal{D}_{s_{2}}(x+iy)$. Indeed, for $u+iv \in \mathcal{D}_{s_{1}}(z_{l,j})$, we have
\begin{align*}
|x+iy- (u+iv)| &\leq |x- u|+|y- v| \\
&\leq |x- x_{l,j}|+|u- x_{l,j}|+|y- y_{l,j}|+|y_{l,j}- v| \\
&\leq \frac{\delta^{2}}{2}y_{l,j}+s_{1}y_{l,j}+\frac{\delta^{2}}{2}y_{l,j}+s_{1}y_{l,j} \\
&\leq 3s_{1}y_{l,j} 
\leq 3s_{1}\left[  \frac{1}{\sqrt{2}}\left(1- s_{1} \right)  \right]^{-1} y = s_{2}y.
\end{align*}
According to Proposition \ref{pro:main1q8}, we have
\begin{align*}
\Phi\left( |F(x+iy)-F(z_{l,j})|  \right)
&\leq C_{2}\int \int_{\mathcal{D}_{s_{1}}(z_{l,j})}\Phi\left( |F(u+iv)|  \right)\frac{dudv}{v^{2}} \\
&\leq C_{2}\int \int_{\mathcal{D}_{s_{2}}(x+iy)}\Phi\left( |F(u+iv)|  \right)\frac{dudv}{v^{2}} \\
&\leq C_{2}\int_{|x- u| < s_{2}y}\int_{|y- v| < s_{2}y}\Phi\left( |F(u+iv)|  \right)\frac{dudv}{v^{2}},
\end{align*}
where  $C_{2}$ is the constant independent of $\delta$  appearing in the relation (\ref{eq:inegqhay}). 

For  $0<\delta<1$ such that $(1+7\sqrt{2})\delta<1$, we have
\begin{align*}
II(j,l)&:=\int_{J_{j}}\int_{I_{l,j}}\Phi\left( |F(x+iy)-F(z_{l,j})|  \right)y^{\alpha}dxdy \\
&\leq C_{2} \int_{J_{j}}\int_{I_{l,j}}\left(\int \int_{\mathcal{D}_{s_{1}}(z_{l,j})}\Phi\left( |F(u+iv)|  \right)\frac{dudv}{v^{2}}\right)y^{\alpha}dxdy\\
&\leq 5C_{1}C_{2} \int_{J_{j}'}\int_{I_{l,j}'}\left(\int \int_{\mathcal{D}_{s_{1}}(z_{l,j})}\Phi\left( |F(u+iv)|  \right)\frac{dudv}{v^{2}}\right)y^{\alpha}dxdy \\
&\leq 5C_{1}C_{2} \int_{J_{j}'}\int_{I_{l,j}'}\left(\int_{|x- u| < s_{2}y}\int_{|y- v| < s_{2}y}\Phi\left( |F(u+iv)|  \right)\frac{dudv}{v^{2}}\right)y^{\alpha}dxdy, 
\end{align*}
thanks to the inequality (\ref{eq:inqaaaqy}). It follows that, 
\begin{align*}
II&:= \sum_{j\in \mathbb{Z}}\sum_{l\in \mathbb{Z}}\int_{J_{j}}\int_{I_{l,j}}\Phi\left( |F(x+iy)-F(z_{l,j})|  \right)y^{\alpha}dxdy \\
&\leq 5C_{1}C_{2}\sum_{j\in \mathbb{Z}}\sum_{l\in \mathbb{Z}}\int_{J_{j}'}\int_{I_{l,j}'}\left(  \int_{|x- u| < s_{2}y}\int_{|y- v| < s_{2}y}\Phi\left( |F(u+iv)|  \right)\frac{dudv}{v^{2}} \right)y^{\alpha}dxdy \\
&=5C_{1}C_{2} \int_{\cup_{j\in \mathbb{Z}}J_{j}'}\int_{\cup_{l\in \mathbb{Z}}I_{l,j}'}\int_{(1-s_{2})y}^{(1+s_{2})y}\Phi\left( |F(u+iv)| \right) \left(\int_{u-s_{2}y}^{u+s_{2}y} dx \right) \frac{dudv}{v^{2}}y^{\alpha}dy \\
&\leq 10s_{2}C_{1}C_{2}\int_{0}^{\infty}\int_{(1-s_{2})y}^{(1+s_{2})y}\left(\int_{-\infty}^{\infty}\Phi\left( |F(u+iv)| \right) du \right)\frac{dv}{v^{2}}y^{\alpha+1}dy \\
&= 10s_{2}C_{1}C_{2}\int_{0}^{\infty}\int_{(1-s_{2})y}^{(1+s_{2})y}\|F(.+iv)\|_{L^{\Phi}}v^{-2}y^{\alpha+1}dvdy.
\end{align*}
Since the function $v\mapsto \|F(.+iv)\|_{L^{\Phi}}$ is non-increasing on $(0, \infty)$, we have
\begin{align*}
\int_{0}^{\infty}\int_{(1-s_{2})y}^{(1+s_{2})y}\|F(.+iv)\|_{L^{\Phi}}v^{-2}y^{\alpha+1}dvdy &\leq \int_{0}^{\infty}\|F(.+i(1-s_{2})y)\|_{L^{\Phi}}\left(\int_{(1-s_{2})y}^{(1+s_{2})y}v^{-2}dv\right)y^{\alpha+1}dy \\
&\leq \frac{2s_{2}}{(1-s_{2})^{2}} \int_{0}^{\infty}\|F(.+i(1-s_{2})y)\|_{L^{\Phi}}y^{\alpha}dy \\
&= \frac{2s_{2}}{(1-s_{2})^{3+\alpha}} \int_{0}^{\infty}\|F(.+iy)\|_{L^{\Phi}}y^{\alpha}dy.
\end{align*}
We deduce that
$$ \sum_{j\in \mathbb{Z}}\sum_{l\in \mathbb{Z}}\int_{J_{j}}\int_{I_{l,j}}\Phi\left( |F(x+iy)-F(z_{l,j})|  \right)y^{\alpha}dxdy \leq \frac{20s_{2}^{2}C_{1}C_{2}}{(1-s_{2})^{3+\alpha}} \int_{0}^{\infty}\|F(.+iy)\|_{L^{\Phi}}y^{\alpha}dy.  $$
Put 
$$  C=C_{\Phi,\alpha,\delta}:=\frac{20s_{2}^{2}C_{1}C_{2}K_{\Phi}}{(1-s_{2})^{3+\alpha}}.     $$
Note that $C$ tends towards zero when $\delta$ is very small. We can therefore choose $\delta$ such that  $0<C<1$. This proves that (\ref{eq:inqaaqy}) holds. The proof is complete.
\end{proof}

\begin{lemma}\label{pro:mainqaqaqq6}
Let $\alpha>-1$,  $0<\delta<1$,  $\Phi \in \mathscr{U} \cap \nabla_{2}$ and  $\mu:=\{\mu_{l,j}\}_{l,j\in \mathbb{Z}} \in \ell^{\Phi}_{\alpha}(\mathbb{C_{+}})$. For $l,j\in \mathbb{Z}$ and  $z \in \mathbb{C}_{+}$,  put 
\begin{equation}\label{eq:imay}
F_{l,j}(z) =2^{\alpha+2}\mu_{l,j}\left(  \frac{z-\overline{z_{l,j}}}{i} \right)^{-\alpha-2}2^{j\gamma (\alpha +2)}.
\end{equation}
Then the functions $F_{l,j}$ and $\sum_{j\in \mathbb{Z}}\sum_{l\in \mathbb{Z}}F_{l,j}$ belong to  $A_{\alpha}^{\Phi}(\mathbb{C}_{+})$. Moreover, 
\begin{equation}\label{eq:inaqqaqqay}
F_{l,j}(z_{l,j})=\mu_{l,j}
 \end{equation}
and
\begin{equation}\label{eq:ingaaaqqy}
 \left\|\sum_{j\in \mathbb{Z}}\sum_{l\in \mathbb{Z}}|F_{l,j}|\right\|_{L^{\Phi}_{\alpha}}^{lux} \sim \left\|\mu\right\|_{\ell^{\Phi}_{\alpha}}^{lux}.
 \end{equation}
\end{lemma}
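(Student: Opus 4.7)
The plan is first to verify the pointwise identity $F_{l,j}(z_{l,j})=\mu_{l,j}$ by direct computation: since $z_{l,j}-\overline{z_{l,j}}=2i\cdot 2^{\gamma j}$, we have $\left(\frac{z_{l,j}-\overline{z_{l,j}}}{i}\right)^{-\alpha-2}=2^{-(\alpha+2)}\,2^{-j\gamma(\alpha+2)}$, and the constants collapse to give $\mu_{l,j}$. Membership of each $F_{l,j}$ in $A_\alpha^\Phi(\mathbb{C}_+)$ is then immediate, as $F_{l,j}$ is a scalar multiple of the Bergman kernel $K_\alpha(\cdot,z_{l,j})$, which lies in $A_\alpha^\Phi(\mathbb{C}_+)$ by Proposition \ref{pro:mainplaq6}.

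For the upper bound in (\ref{eq:ingaaaqqy}), I would introduce the majorant
$$G(\omega):=\sum_{l,j\in\mathbb{Z}}|\mu_{l,j}|\,\chi_{\mathcal{D}_{s_\delta}(z_{l,j})}(\omega),\qquad \omega\in\mathbb{C}_+,$$
with $s_\delta$ as in (\ref{eq:uaa5n}). For $\omega\in\mathcal{D}_{s_\delta}(z_{l,j})$, (\ref{eq:espqeqs1}) yields $|z-\overline{\omega}|\sim|z-\overline{z_{l,j}}|$ uniformly in $z\in\mathbb{C}_+$, and (\ref{eq:eqas1}) gives $|\mathcal{D}_{s_\delta}(z_{l,j})|_\alpha\sim 2^{j\gamma(\alpha+2)}$. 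Replacing the point mass at $z_{l,j}$ by the average over $\mathcal{D}_{s_\delta}(z_{l,j})$ and summing therefore produces
$$\sum_{l,j\in\mathbb{Z}}|F_{l,j}(z)|\lesssim\mathcal{P}_\alpha^+(G)(z).$$
Boundedness of $\mathcal{P}_\alpha^+$ on $L^\Phi(\mathbb{C}_+,dV_\alpha)$ (Theorem \ref{pro:mainplaqaaqaaqq6}) reduces the estimate to controlling $\|G\|_{L^\Phi_\alpha}^{lux}$, and the pairwise disjointness of the disks $\mathcal{D}_{s_\delta}(z_{l,j})$ combined with $|\mathcal{D}_{s_\delta}(z_{l,j})|_\alpha\sim 2^{j\gamma(\alpha+2)}$ yields $\|G\|_{L^\Phi_\alpha}^{lux}\sim\|\mu\|_{\ell^\Phi_\alpha}^{lux}$. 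Analyticity of $\sum F_{l,j}$ then follows from absolute summability together with the uniform pointwise control of Proposition \ref{pro:main1aq1apaq8} applied to tails of the partial sums on compact subsets of $\mathbb{C}_+$.

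For the reverse inequality, I would set $H:=\sum_{l,j}|F_{l,j}|$. Each $|F_{l,j}|$ is subharmonic, so $H$ is subharmonic on $\mathbb{C}_+$. The sub-mean-value inequality on $\mathcal{D}_{s_\delta}(z_{k,p})$ together with Jensen's inequality for the convex function $\Phi$ gives
$$\Phi(H(z_{k,p}))\lesssim\frac{1}{|\mathcal{D}_{s_\delta}(z_{k,p})|_\alpha}\int_{\mathcal{D}_{s_\delta}(z_{k,p})}\Phi(H(\omega))\,dV_\alpha(\omega).$$
Since $|\mu_{k,p}|=F_{k,p}(z_{k,p})\leq H(z_{k,p})$ and $|\mathcal{D}_{s_\delta}(z_{k,p})|_\alpha\sim 2^{p\gamma(\alpha+2)}$, multiplying by $2^{p\gamma(\alpha+2)}$ and summing over $(k,p)\in\mathbb{Z}^2$ (using disjointness of the disks) produces
$$\sum_{k,p\in\mathbb{Z}}\Phi(|\mu_{k,p}|)\,2^{p\gamma(\alpha+2)}\lesssim\int_{\mathbb{C}_+}\Phi(H(\omega))\,dV_\alpha(\omega).$$
A standard homogenization using the $\Delta_2$-condition of $\Phi$ then upgrades this to the Luxembourg-norm estimate $\|\mu\|_{\ell^\Phi_\alpha}^{lux}\lesssim\|H\|_{L^\Phi_\alpha}^{lux}$.

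The main obstacle is the pointwise majorization $\sum|F_{l,j}(z)|\lesssim\mathcal{P}_\alpha^+(G)(z)$: the Bergman-kernel comparison $|z-\overline{\omega}|\sim|z-\overline{z_{l,j}}|$ on $\mathcal{D}_{s_\delta}(z_{l,j})$ must hold uniformly in the lattice index $(l,j)$, and together with $|\mathcal{D}_{s_\delta}(z_{l,j})|_\alpha\sim 2^{j\gamma(\alpha+2)}$ it is precisely what reproduces the weight $2^{j\gamma(\alpha+2)}$ built into $|F_{l,j}(z)|$. Once this pointwise majorization is secured, convexity of $\Phi$, the $\nabla_2$-condition, and Theorem \ref{pro:mainplaqaaqaaqq6} handle the remainder mechanically.
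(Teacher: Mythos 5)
Your proposal is correct, and the lower-bound half follows essentially the paper's own computation (sub-mean-value property on the disjoint disks $\mathcal{D}_{s_{\delta}}(z_{l,j})$, Jensen for the convex $\Phi$, then homogenization to the Luxemburg norm), but your upper bound takes a genuinely different route. The paper argues by duality: it pairs $\sum_{l,j}|F_{l,j}|$ against $G\in L^{\Psi}(\mathbb{C}_{+},dV_{\alpha})$ with $\|G\|_{L^{\Psi}_{\alpha}}^{lux}\leq 1$, applies Young's inequality to split off $\sum\Phi(|\mu_{l,j}|)2^{j\gamma(\alpha+2)}$ and $\sum\Psi\bigl(\mathcal{P}_{\alpha}^{+}(|G|)(z_{l,j})\bigr)2^{j\gamma(\alpha+2)}$, and controls the latter via the kernel comparison $|K_{\alpha}(z_{l,j},\omega)|\leq 2^{\alpha+2}|K_{\alpha}(z,\omega)|$ for $z\in\mathcal{D}_{s_{\delta}}(z_{l,j})$ together with the boundedness of $\mathcal{P}_{\alpha}^{+}$ on $L^{\Psi}$. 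You instead majorize pointwise, $\sum_{l,j}|F_{l,j}(z)|\lesssim\mathcal{P}_{\alpha}^{+}(G)(z)$ with $G=\sum|\mu_{l,j}|\chi_{\mathcal{D}_{s_{\delta}}(z_{l,j})}$, and invoke the boundedness of $\mathcal{P}_{\alpha}^{+}$ on $L^{\Phi}$ itself; this is cleaner (no Young's inequality, no complementary function, no duality characterization of the Luxemburg norm) and both routes ultimately rest on the same kernel comparison and on Theorem \ref{pro:mainplaqaaqaaqq6}. Two small points: the comparison $|z-\overline{\omega}|\sim|z-\overline{z_{l,j}}|$ for $\omega\in\mathcal{D}_{s_{\delta}}(z_{l,j})$, uniformly in $z\in\mathbb{C}_{+}$, is not what (\ref{eq:espqeqs1}) states (that relation compares $|\xi-\overline{z}|$ with $\mathrm{Im}(z)$ for $\xi$ in a disk about $z$); it is instead the elementary triangle-inequality estimate the paper carries out explicitly, so you should record that computation rather than cite (\ref{eq:espqeqs1}). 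Also, your explicit treatment of the analyticity of $\sum_{l,j}F_{l,j}$ via locally uniform convergence of the tails is more careful than the paper, which asserts analyticity without comment.
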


\begin{proof}
Assume that $\mu\not\equiv 0$ because there is nothing to show when $\mu \equiv 0$. We can assume also without loss of generality that $\left\|\mu\right\|_{\ell^{\Phi}_{\alpha}}^{lux}=1$.

\medskip

Let  $ 0\not\equiv G \in L^{\Psi}(\mathbb{C}_{+}, dV_{\alpha})$ such that $\|G\|_{L^{\Psi}_{\alpha}}^{lux}= 1$, where  $\Psi$ is the complementary function of $\Phi$. For  $l,j\in \mathbb{Z}$, we have
\begin{align*}
\int_{\mathbb{C}_{+}}\sum_{j\in \mathbb{Z}}\sum_{l\in \mathbb{Z}}|F_{lj}(z)G(z)|dV_{\alpha}(z) &\lesssim \sum_{j\in \mathbb{Z}}\sum_{l\in \mathbb{Z}} |\mu_{l,j}|2^{j\gamma (\alpha +2)}\int_{\mathbb{C}_{+}}|K_{\alpha}(z_{l,j},z)|G(z)|dV_{\alpha}(z) \\
&\lesssim \sum_{j\in \mathbb{Z}}\sum_{l\in \mathbb{Z}} |\mu_{l,j}|\mathcal{P}_{\alpha}^{+}(|G|)(z_{l,j})2^{j\gamma (\alpha +2)} \\
&\lesssim \sum_{j\in \mathbb{Z}}\sum_{l\in \mathbb{Z}}\Phi(|\mu_{l,j}|)2^{j\gamma (\alpha +2)} + \sum_{j\in \mathbb{Z}}\sum_{l\in \mathbb{Z}} \Psi\left(  |\mathcal{P}_{\alpha}^{+}(|G|)(z_{l,j}) \right)2^{j\gamma (\alpha +2)} \\
&\lesssim 1 + \sum_{j\in \mathbb{Z}}\sum_{l\in \mathbb{Z}} \Psi\left(  |\mathcal{P}_{\alpha}^{+}(|G|)(z_{l,j}) \right)2^{j\gamma (\alpha +2)},
\end{align*}
thanks to Young's inequality in Orlicz spaces. Let us now show that
$$  \sum_{j\in \mathbb{Z}}\sum_{l\in \mathbb{Z}} \Psi\left(  |\mathcal{P}_{\alpha}^{+}(|G|)(z_{l,j}) \right)2^{j\gamma (\alpha +2)}  < \infty.   $$
We have
\begin{align*}
\sum_{j\in \mathbb{Z}}\sum_{l\in \mathbb{Z}}\Psi\left(|\mathcal{P}_{\alpha}^{+}(|G|)(z_{l,j})|\right)2^{j\gamma (\alpha +2)} &\lesssim \sum_{j\in \mathbb{Z}}\sum_{l\in \mathbb{Z}} \int_{\mathcal{D}_{_{s_{\delta}}}\left(z_{l,j}\right)}  \Psi\left(|\mathcal{P}_{\alpha}^{+}(|G|)(z_{l,j})|\right)dV_{\alpha}(z) \\
&= \sum_{j\in \mathbb{Z}}\sum_{l\in \mathbb{Z}} \int_{\mathcal{D}_{_{s_{\delta}}}\left(z_{l,j}\right)}  \Psi\left(\int_{\mathbb{C}_{+}}|K_{\alpha}(z_{l,j},\omega)| |G(\omega)|dV_{\alpha}(\omega) \right)dV_{\alpha}(z),
\end{align*}
where $s_{\delta}$ is a constant in (\ref{eq:uaa5n}). Let $l,j\in \mathbb{Z}$. For $z \in \mathcal{D}_{_{s_{\delta}}}\left(z_{l,j}\right)$ and $\omega \in \mathbb{C}_{+}$, we have
\begin{align*}
|z-\overline{\omega}| \leq |z-z_{l,j}|+|z_{l,j}-\overline{\omega}|  &\Rightarrow \frac{|z-\overline{\omega}|}{|z_{l,j}-\overline{\omega}|}\leq  \frac{s_{\delta}\mathrm{Im}(z_{l,j})}{|z_{l,j}-\overline{\omega}|}+1 \leq s_{\delta} + 1 \leq 2 \\
&\Rightarrow \frac{|K_{\alpha}(z_{l,j},\omega)|}{|K_{\alpha}(z,\omega)|}  \leq 2^{\alpha+2}.
\end{align*}
Since $\{\mathcal{D}_{s_{\delta}}\left(z_{l,j}\right)\}_{l,j\in \mathbb{Z}}$  is a sequence of pairwise disjoint sets,   it follows that
\begin{align*}
&\sum_{j\in \mathbb{Z}}\sum_{l\in \mathbb{Z}}\int_{\mathcal{D}_{_{s_{\delta}}}\left(z_{l,j}\right)}  \Psi\left(\int_{\mathbb{C}_{+}}|K_{\alpha}(z_{l,j},\omega)| |G(\omega)|dV_{\alpha}(\omega) \right)dV_{\alpha}(z) \\
 &=\sum_{j\in \mathbb{Z}}\sum_{l\in \mathbb{Z}} \int_{\mathcal{D}_{_{s_{\delta}}}\left(z_{l,j}\right)}  \Psi\left(\int_{\mathbb{C}_{+}}|K_{\alpha}(z,\omega)|\frac{|K_{\alpha}(z_{l,j},\omega)|}{|K_{\alpha}(z,\omega)|}|G(\omega)|dV_{\alpha}(\omega)\right)dV_{\alpha}(z) \\
 &\lesssim\sum_{j\in \mathbb{Z}}\sum_{l\in \mathbb{Z}} \int_{\mathcal{D}_{_{s_{\delta}}}\left(z_{l,j}\right)}  \Psi\left(\int_{\mathbb{C}_{+}}|K_{\alpha}(z,\omega)||G(\omega)|dV_{\alpha}(\omega)\right)dV_{\alpha}(z) \\
 &= \int_{\cup_{j,l \in \mathbb{Z} }\mathcal{D}_{_{s_{\delta}}}\left(z_{l,j}\right)}  \Psi\left(|\mathcal{P}_{\alpha}^{+}(|G|)(z)\right)dV_{\alpha}(z) \\ 
 &\lesssim \int_{\mathbb{C}_{+}}  \Psi\left(|\mathcal{P}_{\alpha}^{+}(|G|)(z)\right)dV_{\alpha}(z)  \lesssim \int_{\mathbb{C}_{+}} \Psi(|G(z)|) dV_{\alpha}(z) \lesssim 1, 
  \end{align*}
according to Theorem \ref{pro:mainplaqaaqaaqq6}. We deduce that, the function $\sum_{j\in \mathbb{Z}}\sum_{l\in \mathbb{Z}}|F_{l,j}|$ belongs to $L^{\Phi}(\mathbb{C}_{+}, dV_{\alpha})$ and 
$$\left\|\sum_{j\in \mathbb{Z}}\sum_{l\in \mathbb{Z}}|F_{l,j}|\right\|_{L^{\Phi}_{\alpha}}^{lux} \lesssim \left\|\mu\right\|_{\ell^{\Phi}_{\alpha}}^{lux}. $$
Since $F_{l,j}$ and $\sum_{j\in \mathbb{Z}}\sum_{l\in \mathbb{Z}}F_{l,j}$ are analytic functions on  $\mathbb{C}_{+}$, it follows that they belong to  $A_{\alpha}^{\Phi}(\mathbb{C}_{+})$.

\medskip

Conversely,  
since  $F_{l,j}$ is analytic on $\mathbb{C}_{+}$, we have
$$ \Phi\left(|F_{l,j}(z_{l,j})|\right)\lesssim \frac{1}{\mathrm{Im}( z_{l,j})^{2+\alpha}} \int_{\mathcal{D}_{s_{\delta}}\left(z_{l,j}\right)}\Phi\left(|F_{l,j}(\omega)|\right)dV_{\alpha}(\omega),   $$
thanks to Proposition \ref{pro:main1q8}. Moreover, it is easy to verify that $  F_{l,j}(z_{l,j})= \mu_{l,j}.  $ Put
$  \lambda:=  \left\|\sum_{j\in \mathbb{Z}}\sum_{l\in \mathbb{Z}}|F_{l,j}|\right\|_{L^{\Phi}_{\alpha}}^{lux}.$
We have
\begin{align*}
\sum_{j\in \mathbb{Z}}\sum_{l\in \mathbb{Z}}\Phi\left(\frac{|\mu_{l,j}|}{\lambda}\right)2^{j\gamma (\alpha +2)} &\lesssim \sum_{j\in \mathbb{Z}}\sum_{l\in \mathbb{Z}} \int_{\mathcal{D}_{s_{\delta}}\left(z_{l,j}\right)}  \Phi\left(\frac{\sum_{j\in \mathbb{Z}}\sum_{l\in \mathbb{Z}}|F_{l,j}(\omega)|}{\lambda}\right)dV_{\alpha}(\omega) \\
&\lesssim \int_{\mathbb{C}_{+}}\Phi\left(\frac{\sum_{j\in \mathbb{Z}}\sum_{l\in \mathbb{Z}}|F_{l,j}(\omega)|}{\lambda}\right)dV_{\alpha}(\omega) \lesssim  1, 
\end{align*}
We  deduce that, $ \left\|\mu\right\|_{\ell^{\Phi}_{\alpha}}^{lux}\lesssim \lambda.$
\end{proof}

\begin{proposition}\label{pro:main12am0}
Let $\alpha>-1$,   $0<\delta<1$ and  $\Phi \in  \mathscr{U}\cap \nabla_{2}$. The sequence space $(\ell^{\Phi}_{\alpha}(\mathbb{C_{+}}),  \|.\|_{\ell^{\Phi}_{\alpha}}^{lux})$ is a Banach space.
\end{proposition}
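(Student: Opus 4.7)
The plan is to mirror the standard proof that Orlicz spaces are Banach spaces, since $\ell^{\Phi}_{\alpha}(\mathbb{C}_{+})$ is nothing but the Orlicz space $L^{\Phi}(\mathbb{Z}^{2},d\nu)$ associated with the weighted counting measure $\nu(\{(l,j)\}) = 2^{j\gamma(\alpha+2)}$ on $\mathbb{Z}^{2}$. First I would verify that $\|\cdot\|_{\ell^{\Phi}_{\alpha}}^{lux}$ is indeed a norm: positivity and absolute homogeneity are immediate from the definition; for the triangle inequality one applies the convexity of $\Phi$ (which holds since any $\Phi\in\mathscr{U}$ is assumed convex in this paper) via the standard Luxemburg-norm argument, namely if $a=\|\mu\|^{lux}$ and $b=\|\nu\|^{lux}$, then by convexity $\Phi\bigl(\tfrac{|\mu_{l,j}+\nu_{l,j}|}{a+b}\bigr)\leq \tfrac{a}{a+b}\Phi(|\mu_{l,j}|/a)+\tfrac{b}{a+b}\Phi(|\nu_{l,j}|/b)$, summed against the weights.

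Next, for completeness, I would take a Cauchy sequence $\{\mu^{(n)}\}_{n}$ in $\ell^{\Phi}_{\alpha}(\mathbb{C}_{+})$. The key elementary inequality is that for any $\nu\in\ell^{\Phi}_{\alpha}(\mathbb{C}_{+})$ and any pair $(l,j)$,
$$|\nu_{l,j}|\;\leq\;\|\nu\|_{\ell^{\Phi}_{\alpha}}^{lux}\;\Phi^{-1}\!\left(2^{-j\gamma(\alpha+2)}\right),$$
which follows directly from the single-term bound $\Phi(|\nu_{l,j}|/\lambda)\,2^{j\gamma(\alpha+2)}\leq 1$ valid for any $\lambda>\|\nu\|_{\ell^{\Phi}_{\alpha}}^{lux}$ (recall $\Phi$ is continuous and strictly increasing by Remark \ref{pro:main 5aqaqq2pl}). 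Applied to $\nu=\mu^{(n)}-\mu^{(m)}$, this yields that $\{\mu^{(n)}_{l,j}\}_{n}$ is Cauchy in $\mathbb{C}$ for every fixed $(l,j)$, so I can define the pointwise limit $\mu_{l,j}:=\lim_{n\to\infty}\mu^{(n)}_{l,j}$.

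To conclude that $\mu^{(n)}\to\mu$ in the Luxemburg norm (which, together with the triangle inequality, also gives $\mu\in\ell^{\Phi}_{\alpha}(\mathbb{C}_{+})$), given $\varepsilon>0$ I would pick $N$ such that $\|\mu^{(n)}-\mu^{(m)}\|_{\ell^{\Phi}_{\alpha}}^{lux}\leq\varepsilon$ for all $n,m\geq N$, and apply Fatou's lemma (equivalently, monotone convergence on partial sums) to the double series: for each $n\geq N$, using the continuity of $\Phi$,
$$\sum_{j\in\mathbb{Z}}\sum_{l\in\mathbb{Z}}\Phi\!\left(\frac{|\mu^{(n)}_{l,j}-\mu_{l,j}|}{\varepsilon}\right)2^{j\gamma(\alpha+2)}\;\leq\;\liminf_{m\to\infty}\sum_{j\in\mathbb{Z}}\sum_{l\in\mathbb{Z}}\Phi\!\left(\frac{|\mu^{(n)}_{l,j}-\mu^{(m)}_{l,j}|}{\varepsilon}\right)2^{j\gamma(\alpha+2)}\;\leq\;1,$$
which forces $\|\mu^{(n)}-\mu\|_{\ell^{\Phi}_{\alpha}}^{lux}\leq\varepsilon$ for $n\geq N$. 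There is no serious obstacle: the argument runs entirely parallel to the proof of completeness of $L^{\Phi}$-spaces and to Lemma \ref{pro:main12amq0}. The only mild technicality is that one must justify the interchange of $\liminf$ and the double sum, which is handled by applying Fatou for the counting measure on $\mathbb{Z}^{2}$ (or, equivalently, by truncating to finite rectangles in $\mathbb{Z}^{2}$, passing to the limit in $m$ first, and then sending the rectangle to $\mathbb{Z}^{2}$).
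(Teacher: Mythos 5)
Your proof is correct, but it follows a genuinely different and more elementary route than the paper. You treat $\ell^{\Phi}_{\alpha}(\mathbb{C}_{+})$ purely as the Orlicz space of the weighted counting measure $\nu(\{(l,j)\})=2^{j\gamma(\alpha+2)}$ on $\mathbb{Z}^{2}$ and run the standard completeness argument: the single-term bound $|\nu_{l,j}|\leq \|\nu\|_{\ell^{\Phi}_{\alpha}}^{lux}\,\Phi^{-1}(2^{-j\gamma(\alpha+2)})$ gives coordinatewise Cauchyness, and Fatou's lemma for the counting measure upgrades pointwise convergence to convergence in the Luxemburg norm. The paper instead routes the argument through the analytic side: given a Cauchy sequence $\lambda^{(n)}$, it forms the atoms $F^{(n)}_{l,j}(z)=2^{\alpha+2}\lambda^{(n)}_{l,j}K_{\alpha}(z,z_{l,j})2^{j\gamma(\alpha+2)}$, invokes Lemma \ref{pro:mainqaqaqq6} to see that the associated sums form a Cauchy sequence in $A^{\Phi}_{\alpha}(\mathbb{C}_{+})$, uses the completeness of $A^{\Phi}_{\alpha}(\mathbb{C}_{+})$ (Lemma \ref{pro:main12amq0}) to extract a limit $F$, and then recovers the limit sequence as $\lambda_{l,j}:=F(z_{l,j})$ via the pointwise estimate of Proposition \ref{pro:main1q8} together with Fatou. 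Your argument buys simplicity and self-containment: it needs neither the Bergman kernel, nor the equivalence of norms in Lemma \ref{pro:mainqaqaqq6}, nor the hypotheses $\Phi\in\mathscr{U}\cap\nabla_{2}$ beyond convexity (so it actually proves a slightly more general statement), and it also explicitly checks the norm axioms, which the paper leaves implicit. The paper's route buys coherence with the atomic-decomposition machinery being built — the limit sequence is exhibited as the sampled values of a Bergman--Orlicz function — at the cost of relying on the stated hypotheses and on previously established analytic lemmas. One small technical point in your write-up is correctly anticipated: passing from $\|\mu^{(n)}-\mu^{(m)}\|^{lux}\leq\varepsilon$ to the modular inequality at level $\varepsilon$ uses the continuity of $\Phi$ (letting $\varepsilon'\downarrow\varepsilon$), which is guaranteed by Remark \ref{pro:main 5aqaqq2pl}, and your truncation-to-finite-rectangles justification of the Fatou step is sound.
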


\begin{proof}
Let $(\lambda^{(n)})_{n\in \mathbb{N}}$ be a Cauchy sequence of elements of $\ell^{\Phi}_{\alpha}(\mathbb{C_{+ }})$, where $\lambda^{(n)}:= \{\lambda_{l,j}^{(n)}\}_{l,j\in \mathbb{Z}}$, for all $n\in \mathbb{N}$.
For $\varepsilon >0$,  there exists $n_{\varepsilon}\in \mathbb{N}$ such that for all $ n,m \geq n_{\varepsilon}$, $\|\lambda^{(n)}-\lambda^{(m)}\|_{\ell^{\Phi}_{\alpha}}^{lux}\leq \varepsilon.$
For  $l,j\in \mathbb{Z}$ and $n\in \mathbb{N}$, 
consider the function $F_{l,j}^{(n)}$ defined by
$$  F_{l,j}^{(n)}(z) =2^{\alpha+2}\lambda_{l,j}^{(n)}\left(  \frac{z-\overline{z_{l,j}}}{i} \right)^{-\alpha-2}2^{j\gamma (\alpha +2)},~~ \forall~z \in \mathbb{C}_{+}.
   $$
By construction the function  $F_{l,j}^{(n)}$ belongs to  $A_{\alpha}^{\Phi}(\mathbb{C}_{+})$. Moreover,  $F_{l,j}^{(n)}(z_{l,j})= \lambda_{l,j}^{(n)}$ and 
$$  \left\|\sum_{j\in \mathbb{Z}}\sum_{l\in \mathbb{Z}}|F_{l,j}^{(n)}|\right\|_{L^{\Phi}_{\alpha}}^{lux} \sim \left\|\lambda^{(n)}\right\|_{\ell^{\Phi}_{\alpha}}^{lux},  $$
thanks to Lemma \ref{pro:mainqaqaqq6}. For  $ n,m \geq n_{\varepsilon}$,  we have 
 $$\left\|F_{l,j}^{(n)}-F_{l,j}^{(m)} \right\|_{A^{\Phi}_{\alpha}}^{lux} \lesssim \left\|\sum_{j\in \mathbb{Z}}\sum_{l\in \mathbb{Z}}|F_{l,j}^{(n)}-F_{l,j}^{(m)}|\right\|_{L^{\Phi}_{\alpha}}^{lux}\lesssim \left\|\lambda^{(n)}-\lambda^{(m)}\right\|_{\ell^{\Phi}_{\alpha}}^{lux}\lesssim \varepsilon.$$
We deduce that,   $\left(F_{l,j}^{(n)}\right)_{n}$ is a Cauchy sequence in $A^{\Phi}_{\alpha}(\mathbb{C_{+ }})$. Since $A^{\Phi}_{\alpha}(\mathbb{C_{+ }})$ is a Banach space, there exists $F \in A^{\Phi}_{\alpha}(\mathbb{C_{+ }})$ and  $n_{\varepsilon}'\in \mathbb{N}$ such that for  $ n \geq n_{\varepsilon}'$, 
$   \|F_{l,j}^{(n)}-F\|_{A^{\Phi}_{\alpha}}^{lux} \leq \varepsilon. $
For  $ n \geq n_{\varepsilon}'$, we have
\begin{align*}
\Phi\left( \frac{|\lambda_{l,j}^{(n)}-F(z_{l,j})|}{\varepsilon}  \right)2^{j\gamma (\alpha +2)}&=
\Phi\left( \frac{|F_{l,j}^{(n)}(z_{l,j})-F(z_{l,j})|}{\varepsilon}  \right)2^{j\gamma (\alpha +2)} \\
&\lesssim \int_{\mathcal{D}_{s_{\delta}}\left(z_{l,j}\right)}\Phi\left( \frac{\sum_{j\in \mathbb{Z}}\sum_{l\in \mathbb{Z}}|F_{l,j}^{(n)}(\omega)-F(\omega)|}{\varepsilon} \right)dV_{\alpha}(\omega),
\end{align*}
where $s_{\delta}$ is a constant in (\ref{eq:uaa5n}). It follows that 
\begin{align*}
\sum_{j\in \mathbb{Z}}\sum_{l\in \mathbb{Z}}\Phi\left( \frac{|\lambda_{l,j}^{(n)}-F_{l,j}(z_{l,j})|}{\varepsilon}  \right)2^{j\gamma (\alpha +2)}
&\lesssim \sum_{j\in \mathbb{Z}}\sum_{l\in \mathbb{Z}}\int_{\mathcal{D}_{s_{\delta}}\left(z_{l,j}\right)}\Phi\left( \frac{\sum_{j\in \mathbb{Z}}\sum_{l\in \mathbb{Z}}|F_{l,j}^{(n)}(\omega)-F(\omega)|}{\varepsilon} \right)dV_{\alpha}(\omega) \\
&\lesssim \int_{\mathbb{C}_{+}}\Phi\left( \frac{\sum_{j\in \mathbb{Z}}\sum_{l\in \mathbb{Z}}|F_{l,j}^{(n)}(\omega)-F(\omega)|}{\varepsilon} \right)dV_{\alpha}(\omega) \\
&\lesssim \liminf_{m\to \infty}\int_{\mathbb{C}_{+}}\Phi\left( \frac{\sum_{j\in \mathbb{Z}}\sum_{l\in \mathbb{Z}}|F_{l,j}^{(n)}(\omega)-F_{l,j}^{(m)}(\omega)|}{\varepsilon} \right)dV_{\alpha}(\omega) \\
&\lesssim 1,
\end{align*}
thanks to Fatou's Lemma. We deduce that 
$$  \left\|\lambda^{(n)}-\lambda\right\|_{\ell^{\Phi}_{\alpha}}^{lux}\lesssim \varepsilon,    $$
where $\lambda:= \{F(z_{l,j})\}_{l,j\in \mathbb{Z}}$. It follows that the sequence $\lambda$ belongs to $\ell^{\Phi}_{\alpha}(\mathbb{C_{+}})$. Indeed, for  $ n \geq n_{\varepsilon}'$, 
$$ \left\|\lambda\right\|_{\ell^{\Phi}_{\alpha}}^{lux} \lesssim \left\|\lambda^{(n)}\right\|_{\ell^{\Phi}_{\alpha}}^{lux}+  \left\|\lambda^{(n)}-\lambda\right\|_{\ell^{\Phi}_{\alpha}}^{lux}\lesssim \left\|\lambda^{(n)}\right\|_{\ell^{\Phi}_{\alpha}}^{lux}+ \varepsilon < \infty.    $$
We then conclude that the sequence space $(\ell^{\Phi}_{\alpha}(\mathbb{C_{+}}),  \|.\|_{\ell^{\Phi}_{\alpha}}^{lux})$ is a Banach space.
\end{proof}

Let   $\alpha>-1$,  $0<\delta<1$ and $\Phi \in \mathscr{U} \cap \nabla_{2}$. We introduce the following space
$$\mathcal{A}_\alpha^{\Phi}(\mathbb{C}_+):=\left\{F=2^{\alpha+2}\sum_{j\in \mathbb{Z}}\sum_{l\in \mathbb{Z}}\lambda_{l,j}K_\alpha(z,z_{l,j})2^{j\gamma (\alpha +2)},\textrm{for some}\quad \{\lambda_{l,j}\}_{l,j\in \mathbb{Z}}\in \ell_{\alpha}^{\Phi}(\mathbb{C}_{+})\right\}.$$
For $F\in \mathcal{A}_\alpha^{\Phi}(\mathbb{C}_+)$, we put $$\Vert F\Vert_{\mathcal{A}_\alpha^{\Phi}}:=\inf\Vert \{\lambda_{l,j}\}_{l,j\in \mathbb{Z}}\Vert_{\ell_\alpha^\Phi}^{lux} $$
where the infimum is taken over all sequences $\{\lambda_{l,j}\}_{l,j\in \mathbb{Z}}\in \ell_{\alpha}^{\Phi}(\mathbb{C}_{+})$ such that $$F=2^{\alpha+2}\sum_{j\in \mathbb{Z}}\sum_{l\in \mathbb{Z}}\lambda_{l,j}K_\alpha(z,z_{l,j})2^{j\gamma (\alpha +2)}.$$ We observe that endowed with the norm $\Vert \cdot \Vert_{\mathcal{A}_\alpha^{\Phi}}$, $\mathcal{A}_\alpha^{\Phi}(\mathbb{C}_+)$ is a Banach space.
\vskip .1cm
Given $F\in \mathcal{A}_\alpha^{\Phi}(\mathbb{C}_+$, we denote by $F_\lambda$ the representation of $F$ associated to the sequence $\lambda=\{\lambda_{l,j}\}_{l,j\in \mathbb{Z}}$. From Lemma \ref{pro:mainqaqaqq6}, we have that $\mathcal{A}_\alpha^{\Phi}(\mathbb{C}_+)$ embeds continuously into ${A}_\alpha^{\Phi}(\mathbb{C}_+)$. From the same Lemma, we have that if $F_\lambda$ and $F_\mu$ are two representations of $F$, then 
$$\Vert\{\mu_{l,j}\}_{l,j\in \mathbb{Z}}\Vert_{\ell_\alpha^\Phi}^{lux}\sim \Vert \{\lambda_{l,j}\}_{l,j\in \mathbb{Z}}\Vert_{\ell_\alpha^\Phi}^{lux}.$$  
\vskip .1cm
We have the following duality result.

\begin{theorem}\label{pro:main 5aqkqa3pl}
Let   $\alpha>-1$,  $0<\delta<1$ and $\Phi \in \mathscr{U} \cap \nabla_{2}$. The topological dual $\left(\ell_{\alpha}^{\Phi}(\mathbb{C}_{+})\right)^{*}$   of $\ell_{\alpha}^{\Phi}(\mathbb{C}_{+})$     is isomorphic to $\ell_{\alpha}^{\Psi}(\mathbb{C}_{+})$, in the sense that, for all  $T\in \left(\ell_{\alpha}^{\Phi}(\mathbb{C}_{+})\right)^{*}$, there is a unique $\mu:=\{\mu_{i,j}\}_{j,l \in \mathbb{Z} }\in \ell_{\alpha}^{\Psi}(\mathbb{C}_{+})$ such that for all $\lambda:=\{\lambda_{i,j}\}_{j,l \in \mathbb{Z} }\in \ell_{\alpha}^{\Phi}(\mathbb{C}_{+})$, 
 $$    T (\lambda)= c_{\alpha}\sum_{j\in \mathbb{Z}}\sum_{l\in \mathbb{Z}}\lambda_{i,j}\overline{\mu_{i,j}}2^{j\gamma (\alpha +2)} , 
   $$
where $\Psi$ is the complementary function of $\Phi$.
\end{theorem}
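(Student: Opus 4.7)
The plan is to mirror the proof of Theorem \ref{pro:main 5aqk3pl} by viewing $\ell^{\Phi}_{\alpha}(\mathbb{C}_{+})$ as an Orlicz space on a countable measure space and invoking the classical Orlicz duality. Concretely, let $\nu_{\alpha}$ denote the purely atomic measure on $\mathbb{Z}^{2}$ defined by $\nu_{\alpha}(\{(l,j)\}) = 2^{j\gamma(\alpha+2)}$. Directly from the definitions,
$$\|\lambda\|_{L^{\Phi}(\mathbb{Z}^{2}, d\nu_{\alpha})}^{lux} = \|\lambda\|_{\ell^{\Phi}_{\alpha}}^{lux},$$
so the map $\lambda \mapsto \lambda$ is an isometric isomorphism between $\ell^{\Phi}_{\alpha}(\mathbb{C}_{+})$ and $L^{\Phi}(\mathbb{Z}^{2}, d\nu_{\alpha})$, and similarly for $\Psi$.

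For the easy direction, given $\mu = \{\mu_{l,j}\}_{l,j \in \mathbb{Z}} \in \ell^{\Psi}_{\alpha}(\mathbb{C}_{+})$, I would define
$$T_{\mu}(\lambda) := c_{\alpha}\sum_{j \in \mathbb{Z}} \sum_{l \in \mathbb{Z}} \lambda_{l,j} \overline{\mu_{l,j}}\, 2^{j\gamma(\alpha+2)}$$
and apply the pointwise Young inequality $|st| \leq \Phi(s) + \Psi(t)$ to each term. Summing against the weights $2^{j\gamma(\alpha+2)}$ and scaling by the Luxemburg norms of $\lambda$ and $\mu$ yields $|T_{\mu}(\lambda)| \lesssim \|\lambda\|_{\ell^{\Phi}_{\alpha}}^{lux} \|\mu\|_{\ell^{\Psi}_{\alpha}}^{lux}$. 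This shows that $T_{\mu} \in \left(\ell^{\Phi}_{\alpha}(\mathbb{C}_{+})\right)^{*}$.

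For the converse, given $T \in \left(\ell^{\Phi}_{\alpha}(\mathbb{C}_{+})\right)^{*}$, I would transfer $T$ to a bounded linear functional on $L^{\Phi}(\mathbb{Z}^{2}, d\nu_{\alpha})$ via the isometry above. Since $\Phi \in \mathscr{U} \cap \nabla_{2}$, both $\Phi$ and its complementary function $\Psi$ satisfy $\Delta_{2}$, so the classical Rao-Ren Orlicz duality (see \cite{raoren}) applies and produces a unique $\widetilde{\mu} \in L^{\Psi}(\mathbb{Z}^{2}, d\nu_{\alpha})$ representing $T$ as an integral pairing against $d\nu_{\alpha}$. Setting $\mu_{l,j} := c_{\alpha}^{-1}\, \overline{\widetilde{\mu}(l,j)}$ absorbs the constant $c_{\alpha}$ and yields the required representation; uniqueness of $\mu$ follows from the uniqueness part of Rao-Ren.

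The main obstacle is not conceptual but administrative: one must verify that $(\mathbb{Z}^{2}, d\nu_{\alpha})$ is a $\sigma$-finite measure space (it is, since each atom carries finite mass $2^{j\gamma(\alpha+2)}$ and $\mathbb{Z}^{2}$ is countable) so that the Rao-Ren machinery applies without modification. Once this identification is in place, the proof is essentially bookkeeping, exactly parallel to the Bergman-Orlicz case handled in Theorem \ref{pro:main 5aqk3pl}.
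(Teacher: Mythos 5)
Your proof is correct, but it follows a genuinely different route from the paper's. You identify $\ell_{\alpha}^{\Phi}(\mathbb{C}_{+})$ isometrically with the Orlicz space $L^{\Phi}(\mathbb{Z}^{2},d\nu_{\alpha})$ over the purely atomic, $\sigma$-finite measure $\nu_{\alpha}(\{(l,j)\})=2^{j\gamma(\alpha+2)}$, and then invoke the classical Rao--Ren duality $\left(L^{\Phi}\right)^{*}=L^{\Psi}$ (legitimate here, since $\Phi\in\mathscr{U}\cap\nabla_{2}$ forces both $\Phi$ and $\Psi$ to satisfy $\Delta_{2}$ globally, which covers atoms of all masses $2^{j\gamma(\alpha+2)}$, $j\in\mathbb{Z}$). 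The paper instead routes the argument through holomorphic function theory: given $T\in\left(\ell_{\alpha}^{\Phi}\right)^{*}$, it defines $T_{1}(F_{\lambda}):=T(\lambda)$ on the atomic-decomposition space $\mathcal{A}_{\alpha}^{\Phi}(\mathbb{C}_{+})$, extends $T_{1}$ to $\left(A_{\alpha}^{\Phi}(\mathbb{C}_{+})\right)^{*}$ by the continuous embedding $\mathcal{A}_{\alpha}^{\Phi}\hookrightarrow A_{\alpha}^{\Phi}$, represents it by a unique $G\in A_{\alpha}^{\Psi}(\mathbb{C}_{+})$ via Theorem \ref{pro:main 5aqk3pl}, and then uses the reproducing property of $\mathcal{P}_{\alpha}$ to compute $T(\lambda)=c_{\alpha}\sum_{j}\sum_{l}\lambda_{l,j}\overline{G(z_{l,j})}2^{j\gamma(\alpha+2)}$, so that $\mu_{l,j}=G(z_{l,j})$. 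Your argument is shorter and more elementary, and it avoids the well-definedness issue implicit in setting $T_{1}(F_{\lambda}):=T(\lambda)$ when a function admits several atomic representations. What the paper's longer route buys is the extra structural information that the representing sequence can be taken to be the restriction $\{G(z_{l,j})\}$ of a Bergman--Orlicz function $G\in A_{\alpha}^{\Psi}(\mathbb{C}_{+})$ to the lattice, which is the form in which the duality is exploited later (e.g.\ in the surjectivity argument of Lemma \ref{pro:mainqaaqaqp6}); your abstract $\widetilde{\mu}\in L^{\Psi}(\mathbb{Z}^{2},d\nu_{\alpha})$ does not come with that identification.
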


\begin{proof}
Let  $T\in \left(\ell_{\alpha}^{\Phi}(\mathbb{C}_{+})\right)^{*}$. Put 
$$   T_{1}(F_\lambda) = T\left(  \lambda\right), ~~ \forall~ \lambda:=\{\lambda_{i,j}\}_{j,l \in \mathbb{Z} } \in \ell_{\alpha}^{\Phi}(\mathbb{C}_{+}), $$
where $F_\lambda$ is the function defined by
$$ F_\lambda(z)=2^{\alpha+2}\sum_{j\in \mathbb{Z}}\sum_{l\in \mathbb{Z}}\lambda_{l,j}K_{\alpha}(z, z_{l,j})
2^{j\gamma (\alpha +2)},~~ \forall~z \in \mathbb{C}_{+}.   $$
By construction,  $T_{1}\in \left(\mathcal{A}_{\alpha}^{\Phi}(\mathbb{C}_{+})\right)^{*}$. Since  $\mathcal{A}_\alpha^{\Phi}(\mathbb{C}_+)$ embeds continuously into $A_\alpha^{\Phi}(\mathbb{C}_+)$,  then $T_{1}$ can be extended to an element of $\left(A_{\alpha}^{\Phi}(\mathbb{C}_+)\right)^{*}$  with the same operator norm. We still denote this extension $T_{1}$. Thus we have that there exists a unique $G\in A_{\alpha}^{\Psi}(\mathbb{C}_{+})$ such that for any $F\in  A_{\alpha}^{\Phi}(\mathbb{C}_{+})$,
 $$T_{1}(F)=\int_{\mathbb{C}_+}F(z)\overline{G(z)}dV_\alpha(z).$$
In particular, for any $F\in \mathcal{A}_\alpha^{\Phi}(\mathbb{C}_+)$, with $$F=F_{\lambda}(z) =2^{\alpha+2}\sum_{j\in \mathbb{Z}}\sum_{l\in \mathbb{Z}}\lambda_{l,j}K_{\alpha}(z, z_{l,j})
2^{j\gamma (\alpha +2)},~~ \forall~z \in \mathbb{C}_{+},$$
 since $P_\alpha$ reproduces functions in $A_{\alpha}^{\Psi}(\mathbb{C}_{+})$ (see Lemma \ref{pro:mainplaqzepama6}), we obtain
\begin{align*}
T_{1}(F) &:= \int_{\mathbb{C}_{+}} F_{\lambda}(z)\overline{G(z)}dV_{\alpha}(z)\\  
&=2^{\alpha+2}\sum_{j\in \mathbb{Z}}\sum_{l\in \mathbb{Z}}\lambda_{l,j}2^{j\gamma (\alpha +2)}\int_{\mathbb{C}_{+}} \overline{K_{\alpha}(z_{l,j},z)G(z)}dV_{\alpha}(z)   \\
&=2^{\alpha+2}\sum_{j\in \mathbb{Z}}\sum_{l\in \mathbb{Z}}\lambda_{l,j}\overline{\mathcal{P}_{\alpha}(G)(z_{l,j})}2^{j\gamma (\alpha +2)} 
= 2^{\alpha+2}\sum_{j\in \mathbb{Z}}\sum_{l\in \mathbb{Z}}\lambda_{l,j}\overline{G(z_{l,j})}2^{j\gamma (\alpha +2)}. 
\end{align*}
It follows that, for all $\lambda:=\{\lambda_{i,j}\}_{j,l \in \mathbb{Z} } \in \ell_{\alpha}^{\Phi}(\mathbb{C}_{+})$, we have
$$  T(\lambda)=T_1(F_\lambda)= 2^{\alpha+2}\sum_{j\in \mathbb{Z}}\sum_{l\in \mathbb{Z}}\lambda_{i,j}\overline{\mu_{i,j}}2^{j\gamma (\alpha +2)}, 
   $$
where $\mu_{i,j}:=G(z_{i,j})$,  for all $i,j\in\mathbb{Z}$.   
\end{proof}

\subsection{Averaging functions and Berezin transform on $\mathbb{C}_{+}$.}
  For $\alpha > -1$ and $\omega \in \mathbb{C}_{+}$, the normalized reproducing kernel at $\omega$ is given by
   \begin{equation}\label{eq:espqaqeqs1}
   k_{\alpha,\omega}(z)= \frac{\mathrm{Im}( \omega)^{(2+\alpha)/2}}{(z-\overline{\omega})^{2+\alpha}}, ~~\forall~z \in \mathbb{C_{+}}.
    \end{equation} 
   It is easy to verify that
   \begin{equation}\label{eq:espqaqs1}
   |k_{\alpha,\omega}(z)|^{2} \leq |K_{\alpha}(z,\omega)| , ~~\forall~z, \omega\in \mathbb{C_{+}},
    \end{equation} 
   where $K_{\alpha}$ is the function  defined in (\ref{eq:fo4nAQ1l}). 
    
 \medskip
  
Let $\mu$ be a positive measure on $\mathbb{C}_{+}$ and  $0<s< 1$. The average function  $\widehat{\mu}_{s}$ of  $\mu$ is the function defined by
 $$  \widehat{\mu}_{s}(z)= \frac{\mu(\mathcal{D}_{s}(z))}{| \mathcal{D}_{s}(z)|_{\alpha}},~~ \forall~z \in \mathbb{C}_{+}.  $$
We recall that the Berezin transform $\widetilde{\mu}$ of the measure $\mu$ is the function defined for any
 $\omega \in \mathbb{C}_{+}$ by
$$ \widetilde{\mu}(\omega) = \int_{\mathbb{C}_{+}}|k_{\alpha,\omega}(z)|^{2}d\mu(z).
 $$

 We have the following.
 \begin{proposition}\label{pro:main1qaqaqp8}
 Let $\alpha > -1$,   $\mu$ a positive measure on $\mathbb{C}_{+}$ and $z \in \mathbb{C}_{+}$. For  $0<s < 1$, there exists a constant $C_{1}:=C_{s,\alpha}> 0$ such that 
 \begin{equation}\label{eq:espqepmqs1}
 \widehat{\mu}_{s}(z) \leq C_{1}\widetilde{\mu}(z).
 \end{equation} 
 Conversely, for $s$ small enough, there exists a constant $C_{2}:=C_{s,\alpha}> 0$ such that 
 \begin{equation}\label{eq:espqeaqpmqs1}
 \widetilde{\mu}(z) \leq C_{2}\mathcal{P}_{\alpha}^{+}(\widehat{\mu}_{s})(z).
 \end{equation} 
 \end{proposition}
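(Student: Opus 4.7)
\medskip

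My plan is to handle the two inequalities by very different means, both elementary and local in nature.

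For part (i), I would simply restrict the domain of integration in the Berezin transform to the disk $\mathcal{D}_s(z)$. For $\omega \in \mathcal{D}_s(z)$, relation (\ref{eq:espqeqs1}) gives $|\omega-\overline{z}| \sim \mathrm{Im}(z)$ (applied with exponent $2(2+\alpha)$), and relation (\ref{eq:eqas1}) gives $|\mathcal{D}_s(z)|_\alpha \sim (\mathrm{Im}\,z)^{2+\alpha}$. Plugging in:
\begin{equation*}
\widetilde{\mu}(z) \geq \int_{\mathcal{D}_s(z)} \frac{(\mathrm{Im}\,z)^{2+\alpha}}{|\omega-\overline{z}|^{2(2+\alpha)}}\,d\mu(\omega) \gtrsim \frac{1}{(\mathrm{Im}\,z)^{2+\alpha}} \mu(\mathcal{D}_s(z)) \sim \widehat{\mu}_s(z),
\end{equation*}
which yields (i) with a constant depending only on $s$ and $\alpha$.

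For part (ii), the plan is to expand $\mathcal{P}_\alpha^+(\widehat{\mu}_s)(z)$ via Fubini and then exchange the order of integration using the symmetry $\chi_{\mathcal{D}_s(w)}(\xi) = \chi_{\mathcal{D}_{s''}(\xi)}(w)$ from (\ref{eq:eseqs1}), valid when $(1+\sqrt{2})s<1$ with $s'' := s[\frac{1}{\sqrt{2}}(1-s)]^{-1}$. This gives
\begin{equation*}
\mathcal{P}_\alpha^+(\widehat{\mu}_s)(z) = \int_{\mathbb{C}_+} \left(\int_{\mathcal{D}_{s''}(\xi)} \frac{dV_\alpha(w)}{|z-\overline{w}|^{2+\alpha}|\mathcal{D}_s(w)|_\alpha}\right) d\mu(\xi).
\end{equation*}
On $\mathcal{D}_{s''}(\xi)$, relations (\ref{eq:espqeqs1}) and (\ref{eq:eqas1}) give $|\mathcal{D}_s(w)|_\alpha \sim (\mathrm{Im}\,\xi)^{2+\alpha} \sim |\mathcal{D}_{s''}(\xi)|_\alpha$. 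For the kernel factor, observing that $|z-\overline{\xi}| \geq \mathrm{Im}(z)+\mathrm{Im}(\xi) \geq \mathrm{Im}(\xi)$, for $w\in \mathcal{D}_{s''}(\xi)$ we get
\begin{equation*}
\bigl||z-\overline{w}| - |z-\overline{\xi}|\bigr| \leq |\xi - w| < s''\mathrm{Im}(\xi) \leq s''|z-\overline{\xi}|,
\end{equation*}
so that $|z-\overline{w}| \sim |z-\overline{\xi}|$ (with constants depending on $s''$, hence the requirement $s$ small). Consequently the inner integral is $\sim |z-\overline{\xi}|^{-(2+\alpha)}$, yielding
\begin{equation*}
\mathcal{P}_\alpha^+(\widehat{\mu}_s)(z) \gtrsim \int_{\mathbb{C}_+}\frac{d\mu(\xi)}{|z-\overline{\xi}|^{2+\alpha}}.
\end{equation*}

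To finish, I would dominate $\widetilde{\mu}(z)$ pointwise by the right-hand side: since $(\mathrm{Im}\,z)^{2+\alpha} \leq |z-\overline{\xi}|^{2+\alpha}$, one has
\begin{equation*}
\frac{(\mathrm{Im}\,z)^{2+\alpha}}{|z-\overline{\xi}|^{2(2+\alpha)}} \leq \frac{1}{|z-\overline{\xi}|^{2+\alpha}},
\end{equation*}
and integrating against $d\mu$ delivers $\widetilde{\mu}(z) \lesssim \mathcal{P}_\alpha^+(\widehat{\mu}_s)(z)$, establishing (\ref{eq:espqeaqpmqs1}). The only delicate step is the double-sided bound $|z-\overline{w}| \sim |z-\overline{\xi}|$ on $\mathcal{D}_{s''}(\xi)$, which is what forces the smallness condition on $s$; everything else is routine from (\ref{eq:espqeqs1}), (\ref{eq:eqas1}) and (\ref{eq:eseqs1}).
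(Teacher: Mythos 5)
Your part (i) is correct and is essentially the paper's argument read in the opposite direction: both restrict to $\mathcal{D}_s(z)$ and invoke (\ref{eq:espqeqs1}) and (\ref{eq:eqas1}) to identify $\mu(\mathcal{D}_s(z))/|\mathcal{D}_s(z)|_\alpha$ with the restricted Berezin integral. Your part (ii), however, takes a genuinely different route. The paper starts from $\widetilde{\mu}(z)=\int|k_{\alpha,z}(\omega)|^2d\mu(\omega)$, uses the sub-mean-value inequality (Proposition \ref{pro:main1q8}) to replace the point evaluation $|k_{\alpha,z}(\omega)|^2$ by its average over $\mathcal{D}_s(\omega)$, applies Fubini and (\ref{eq:eseqs1}) to produce $\widehat{\mu}_{s'}$, and finally uses the pointwise bound $|k_{\alpha,z}(\xi)|^2\leq |K_\alpha(\xi,z)|$ from (\ref{eq:espqaqs1}) to arrive at $\mathcal{P}_\alpha^+(\widehat{\mu}_{s'})(z)$. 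You instead work from the other end: you expand $\mathcal{P}_\alpha^+(\widehat{\mu}_s)(z)$ by Tonelli, flip the characteristic function via (\ref{eq:eseqs1}), and evaluate the inner integral exactly up to constants, obtaining the two-sided identification $\mathcal{P}_\alpha^+(\widehat{\mu}_s)(z)\sim\int_{\mathbb{C}_+}|z-\overline{\xi}|^{-(2+\alpha)}d\mu(\xi)$; the Berezin transform is then dominated by this quantity through the elementary pointwise bound $\mathrm{Im}(z)\leq|z-\overline{\xi}|$. Your argument avoids subharmonicity altogether, yields a slightly stronger intermediate statement (an equivalence rather than a one-sided bound), and lands directly on $\widehat{\mu}_s$ rather than on $\widehat{\mu}_{s'}$ for a dilated parameter, with the only smallness requirement being $(1+\sqrt{2})s<1$, which already forces $s''<1$ and hence the two-sided comparability $|z-\overline{w}|\sim|z-\overline{\xi}|$ on $\mathcal{D}_{s''}(\xi)$. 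The price is that your method is tied to the specific power kernel, whereas the paper's subharmonicity argument would transfer to other kernels satisfying a mean-value inequality. I see no gap in your proof.
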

 
 \begin{proof}
 For $z\in \mathbb{C_{+}}$, we have: 
 \begin{align*}
 \widehat{\mu}_{s}(z)&=\frac{\mu(\mathcal{D}_{s}(z))}{| \mathcal{D}_{s}(z)|_{\alpha}}
 =\int_{\mathbb{C_{+}}}\frac{| \mathcal{D}_{s}(z)|_{\alpha}}{\left(| \mathcal{D}_{s}(z)|_{\alpha}\right)^{2}}\chi_{\mathcal{D}_{s}(z)}(\omega) d\mu(\omega)\\
 &\sim\int_{\mathbb{C_{+}}}\frac{\mathrm{Im}( \omega)^{2+\alpha}}{|\omega-\overline{z}|^{2(2+\alpha)}}\chi_{\mathcal{D}_{s}(z)}(\omega) d\mu(\omega) \\
 &\lesssim \int_{\mathbb{C_{+}}}\frac{\mathrm{Im}( \omega)^{2+\alpha}}{|\omega-\overline{z}|^{2(2+\alpha)}} d\mu(\omega) 
 =  \widetilde{\mu}(z).
 \end{align*}
 
 Conversely, let us choose $s$ such that  $(1+ \sqrt{2})s < 1$  and put $s':=s\left[  \frac{1}{\sqrt{2}}\left(1- s \right)  \right]^{-1}$.  We have 
 \begin{align*}
 \widetilde{\mu}(z)
 &\lesssim  \int_{\mathbb{C}_{+}} \left(  \frac{1}{\mathrm{Im}( \omega)^{2+\alpha}}\int \int_{\mathcal{D}_{s}\left(\omega\right)}|k_{\alpha,z}(\xi)|^{2}dV_{\alpha}(\xi)  \right)d\mu(\omega) \\
 &=  \int_{\mathbb{C}_{+}} \left( \int_{\mathbb{C}_{+}} \frac{1}{\mathrm{Im}( \omega)^{2+\alpha}}|k_{\alpha,z}(\xi)|^{2}\chi_{\mathcal{D}_{s}\left(\omega\right)}(\xi)dV_{\alpha}(\xi)  \right)d\mu(\omega) \\
 &\lesssim  \int_{\mathbb{C}_{+}} |k_{\alpha,z}(\xi)|^{2}\left(\frac{1}{\mathrm{Im}( \xi)^{2+\alpha}} \int_{\mathbb{C}_{+}}  \chi_{\mathcal{D}_{s'}\left(\xi\right)}( \omega) d\mu(\omega)\right)dV_{\alpha}(\xi) \\
 &\lesssim  \int_{\mathbb{C}_{+}} |k_{\alpha,z}(\xi)|^{2}\widehat{\mu}_{s'}(\xi)dV_{\alpha}(\xi)
 \lesssim  \int_{\mathbb{C}_{+}} |K_{\alpha}(\xi,z)|\widehat{\mu}_{s'}(\xi)dV_{\alpha}(\xi)=\mathcal{P}_{\alpha}^{+}(\widehat{\mu}_{s'})(z),
 \end{align*}
 thanks to Fubbini's Theorem and,  inequlities (\ref{eq:eseqs1}) and (\ref{eq:espqaqs1}). 
 \end{proof}

The following result is an immediate consequence of Proposition \ref{pro:main1qaqaqp8} and Theorem \ref{pro:mainplaqaaqaaqq6}. Therefore, the proof will be omitted.

\begin{lemma}\label{pro:main1qppaq8}
Let $0<s < 1$,  $\alpha > -1$,   $\Phi \in \mathscr{U}\cap \nabla_{2}$ and  $\mu$ a positive measure on $\mathbb{C}_{+}$. For $s$ small enough, the following assertions are equivalent: 
\begin{itemize}
\item[(i)] $\widetilde{\mu} \in L^{\Phi}(\mathbb{C_{+}}, dV_{\alpha})$; 
\item[(ii)]  $\widehat{\mu}_{s} \in L^{\Phi}(\mathbb{C_{+}}, dV_{\alpha})$.
\end{itemize}
Moreover,
\begin{equation}\label{eq:inmqqay}
 \|\widehat{\mu}_{s}\|_{L^{\Phi}_{\alpha}}^{lux} \sim \|\widetilde{\mu}\|_{L^{\Phi}_{\alpha}}^{lux}. 
 \end{equation} 
\end{lemma}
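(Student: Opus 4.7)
The plan is to deduce both the equivalence and the norm comparison directly from the pointwise sandwich in Proposition \ref{pro:main1qaqaqp8}, combined with the $L^\Phi$-boundedness of the positive Bergman operator $\mathcal{P}_\alpha^+$ furnished by Theorem \ref{pro:mainplaqaaqaaqq6}.

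First I would handle the implication $(i)\Rightarrow(ii)$ and the corresponding norm bound. Since $\widehat{\mu}_s(z)\le C_1\widetilde{\mu}(z)$ pointwise on $\mathbb{C}_+$ by the first half of Proposition \ref{pro:main1qaqaqp8}, the monotonicity of the Luxemburg norm in $L^\Phi(\mathbb{C}_+, dV_\alpha)$ immediately yields $\widehat{\mu}_s \in L^\Phi(\mathbb{C}_+, dV_\alpha)$ with
$$\|\widehat{\mu}_s\|_{L^\Phi_\alpha}^{lux}\lesssim \|\widetilde{\mu}\|_{L^\Phi_\alpha}^{lux}.$$
Note that this direction requires no hypothesis beyond $\alpha>-1$ and $0<s<1$.

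Next I would treat $(ii)\Rightarrow(i)$. Here I choose $s$ small enough so that the second inequality of Proposition \ref{pro:main1qaqaqp8} applies, giving
$$\widetilde{\mu}(z)\le C_2\,\mathcal{P}_\alpha^+(\widehat{\mu}_s)(z),\qquad z\in\mathbb{C}_+.$$
Assuming $\widehat{\mu}_s\in L^\Phi(\mathbb{C}_+,dV_\alpha)$, the hypothesis $\Phi\in\mathscr{U}\cap\nabla_2$ lets me invoke Theorem \ref{pro:mainplaqaaqaaqq6} to assert that $\mathcal{P}_\alpha^+\colon L^\Phi(\mathbb{C}_+,dV_\alpha)\to L^\Phi(\mathbb{C}_+,dV_\alpha)$ is bounded. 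Composing the pointwise bound with this boundedness and once again using monotonicity of the Luxemburg norm gives
$$\|\widetilde{\mu}\|_{L^\Phi_\alpha}^{lux}\le C_2\,\|\mathcal{P}_\alpha^+(\widehat{\mu}_s)\|_{L^\Phi_\alpha}^{lux}\lesssim \|\widehat{\mu}_s\|_{L^\Phi_\alpha}^{lux}.$$

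Combining the two directions yields the equivalence $(i)\Leftrightarrow(ii)$ together with the comparison $\|\widehat{\mu}_s\|_{L^\Phi_\alpha}^{lux}\sim\|\widetilde{\mu}\|_{L^\Phi_\alpha}^{lux}$. There is no real obstacle: the entire content rests on the pointwise estimates already established in Proposition \ref{pro:main1qaqaqp8}, and the only non-trivial analytic ingredient -- the $L^\Phi$-boundedness of $\mathcal{P}_\alpha^+$ under the $\mathscr{U}\cap\nabla_2$ assumption -- is exactly what Theorem \ref{pro:mainplaqaaqaaqq6} provides. The only subtlety worth flagging explicitly is that the smallness of $s$ is needed only for the direction $(ii)\Rightarrow(i)$, because that is the direction where the inequality $\widetilde{\mu}\le C_2\,\mathcal{P}_\alpha^+(\widehat{\mu}_s)$ is used.
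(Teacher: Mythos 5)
Your proof is correct and follows exactly the route the paper intends: the paper omits the proof of this lemma, stating that it is an immediate consequence of Proposition \ref{pro:main1qaqaqp8} and Theorem \ref{pro:mainplaqaaqaaqq6}, which are precisely the two ingredients you combine. Your remark that the smallness of $s$ is needed only for the direction $(ii)\Rightarrow(i)$ is a correct and worthwhile clarification.
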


\begin{lemma}\label{pro:main1qpapqp8}
Let  $\alpha > -1$,  $\Phi \in \mathscr{L}\cup \mathscr{U}$ and  $\mu$ a positive measure on $\mathbb{C}_{+}$. Let   $0<\delta, s < 1$ such that  $0< \delta \leq \frac{s}{2(s+ \sqrt{2})}$. The following assertions are satisfied.
\begin{itemize}
\item[(i)] If   $\widehat{\mu}_{s} \in L^{\Phi}(\mathbb{C_{+}}, dV_{\alpha})$, then   $\{ \widehat{\mu}_{\delta}(z_{l,j}) \}_{l,j \in \mathbb{Z}} \in \ell_{\alpha}^{\Phi}(\mathbb{C}_{+})$.  Moreover, there exists $C_{1}:=C_{\Phi,\alpha,s}>0$ a constant such that
\begin{equation}\label{eq:inaaqay}
\left\|\{ \widehat{\mu}_{\delta}(z_{l,j}) \}\right\|_{\ell^{\Phi}_{\alpha}}^{lux}\leq C_{1}\|\widehat{\mu}_{s}\|_{L^{\Phi}_{\alpha}}^{lux}.
 \end{equation}
\item[(ii)] If $\{ \widehat{\mu}_{s}(z_{l,j}) \}_{l,j \in \mathbb{Z}} \in \ell_{\alpha}^{\Phi}(\mathbb{C}_{+})$, then $\widehat{\mu}_{\delta} \in L^{\Phi}(\mathbb{C_{+}}, dV_{\alpha})$.  Moreover, there exists $C_{2}:=C_{\Phi,\alpha,s}>0$ a constant such that
\begin{equation}\label{eq:inaaqaqay}
\|\widehat{\mu}_{\delta}\|_{L^{\Phi}_{\alpha}}^{lux}\leq C_{2}\left\|\{ \widehat{\mu}_{s}(z_{l,j}) \}\right\|_{\ell^{\Phi}_{\alpha}}^{lux}.
 \end{equation}
\end{itemize}
\end{lemma}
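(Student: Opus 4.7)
Both parts rest on the same elementary principle: the averaging function at one scale is pointwise controlled by the averaging function at a larger scale at nearby points, together with volume comparability on the lattice disks. The geometric constraint $\delta\leq \frac{s}{2(s+\sqrt{2})}$ is used precisely to make the relevant disk inclusions go through. Throughout I will exploit that $\Phi\in\mathscr{L}\cup\mathscr{U}$ has finite positive indices $0<a_\Phi\leq b_\Phi<\infty$, so $\Phi(ct)\leq c^{b_\Phi}\Phi(t)$ for $c\geq 1$ and $\Phi(t/c)\leq c^{-a_\Phi}\Phi(t)$, which is what lets me convert modular estimates into Luxemburg-norm estimates.

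For \textbf{part (i)}, the key pointwise claim is that for every $\xi\in\mathcal{D}_{s_\delta}(z_{l,j})$ one has $\mathcal{D}_\delta(z_{l,j})\subset\mathcal{D}_s(\xi)$ together with $|\mathcal{D}_\delta(z_{l,j})|_\alpha\sim|\mathcal{D}_s(\xi)|_\alpha$, with implicit constants depending only on $\delta,s,\alpha$. This follows from the triangle inequality and the imaginary-part comparison (\ref{eq:espeqs1}), and the constraint on $\delta$ is exactly what forces the inclusion. Consequently $\widehat{\mu}_\delta(z_{l,j})\lesssim\widehat{\mu}_s(\xi)$, and after composing with the increasing function $\Phi$ and averaging $\xi$ over $\mathcal{D}_{s_\delta}(z_{l,j})$,
$$
\Phi\!\left(\frac{\widehat{\mu}_\delta(z_{l,j})}{M\lambda}\right)2^{j\gamma(\alpha+2)}\lesssim \int_{\mathcal{D}_{s_\delta}(z_{l,j})}\Phi\!\left(\frac{\widehat{\mu}_s(\xi)}{\lambda}\right)dV_\alpha(\xi),
$$
for a suitable constant $M$. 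Summing over $l,j$ and using that the disks $\mathcal{D}_{s_\delta}(z_{l,j})$ are pairwise disjoint (property (b) of the lattice), the right-hand side is bounded by $\int_{\mathbb{C}_+}\Phi(\widehat{\mu}_s/\lambda)dV_\alpha$. Taking $\lambda=\|\widehat{\mu}_s\|_{L_\alpha^\Phi}^{lux}$ this is $\leq 1$, and the remaining multiplicative constant is absorbed into the Luxemburg norm using $\Phi(t/c)\leq c^{-a_\Phi}\Phi(t)$, giving (\ref{eq:inaaqay}).

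For \textbf{part (ii)}, I reverse the roles. The relevant geometric observation now is that whenever $z\in\mathcal{D}_\delta(z_{l,j})$ one has $\mathcal{D}_\delta(z)\subset\mathcal{D}_s(z_{l,j})$ with $|\mathcal{D}_\delta(z)|_\alpha\sim|\mathcal{D}_s(z_{l,j})|_\alpha$, again thanks to $\delta\leq \frac{s}{2(s+\sqrt{2})}$. Hence $\widehat{\mu}_\delta(z)\lesssim\widehat{\mu}_s(z_{l,j})$ pointwise on $\mathcal{D}_\delta(z_{l,j})$, so
$$
\int_{\mathbb{C}_+}\Phi\!\left(\frac{\widehat{\mu}_\delta(z)}{M\lambda}\right)dV_\alpha(z)\leq \sum_{l,j\in\mathbb{Z}}\int_{\mathcal{D}_\delta(z_{l,j})}\Phi\!\left(\frac{\widehat{\mu}_s(z_{l,j})}{\lambda}\right)dV_\alpha(z)\lesssim \sum_{l,j\in\mathbb{Z}}\Phi\!\left(\frac{\widehat{\mu}_s(z_{l,j})}{\lambda}\right)2^{j\gamma(\alpha+2)},
$$
where I have used property (a) of the lattice (covering of $\mathbb{C}_+$) together with property (c) (bounded overlap of the $\mathcal{D}_\delta(z_{l,j})$'s), and $|\mathcal{D}_\delta(z_{l,j})|_\alpha\sim 2^{j\gamma(\alpha+2)}$. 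Choosing $\lambda=\|\{\widehat{\mu}_s(z_{l,j})\}\|_{\ell_\alpha^\Phi}^{lux}$, the right-hand side is $\lesssim 1$, and the multiplicative constant is again absorbed into the Luxemburg norm to conclude (\ref{eq:inaaqaqay}).

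The main obstacle is purely geometric bookkeeping: making sure that the single hypothesis $\delta\leq\frac{s}{2(s+\sqrt{2})}$ is strong enough to simultaneously yield $\mathcal{D}_\delta(z_{l,j})\subset\mathcal{D}_s(\xi)$ in (i) and $\mathcal{D}_\delta(z)\subset\mathcal{D}_s(z_{l,j})$ in (ii), with volumes that are comparable with constants depending only on $s,\delta,\alpha$. Once these inclusions are established, the rest of the argument is routine averaging and exploitation of the two-sided power bounds coming from $0<a_\Phi\leq b_\Phi<\infty$ to pass between modular inequalities and Luxemburg-norm inequalities.
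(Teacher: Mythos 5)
Your proposal is correct and follows essentially the same route as the paper's proof: the same disk inclusions $\mathcal{D}_{\delta}(z_{l,j})\subset \mathcal{D}_{s}(\xi)$ and $\mathcal{D}_{\delta}(z)\subset \mathcal{D}_{s}(z_{l,j})$ forced by $\delta\leq \frac{s}{2(s+\sqrt{2})}$, the same pointwise domination of averaging functions combined with $|\mathcal{D}_{s_\delta}(z_{l,j})|_\alpha\sim 2^{j\gamma(\alpha+2)}$, disjointness of the $\mathcal{D}_{s_{\delta}}(z_{l,j})$ for (i), and the covering with bounded overlap for (ii). The only (cosmetic) difference is that you make the passage from modular to Luxemburg-norm estimates explicit via the indices $a_\Phi,b_\Phi$, whereas the paper handles it implicitly after normalizing the relevant norm to $1$.
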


\begin{proof}
$i)$ We suppose that  $\widehat{\mu}_{s} \in L^{\Phi}(\mathbb{C_{+}}, dV_{\alpha})$ and show that  $\{ \widehat{\mu}_{\delta}(z_{l,j}) \}_{l,j \in \mathbb{Z}} \in \ell_{\alpha}^{\Phi}(\mathbb{C}_{+})$. 
We can assume without loss of generality that  $\mu \not\equiv 0$ and $\|\widehat{\mu}_{s}\|_{L^{\Phi}_{\alpha}}^{lux}=1$.

\medskip

Put $\delta_{1}:=\frac{s}{2(s+ \sqrt{2})}.$ Note that  $0<2(1+ \sqrt{2})\delta_{1} < 1$ and $s:=2\delta_{1}\left[  \frac{1}{\sqrt{2}}\left(1- 2\delta_{1} \right)  \right]^{-1}$. Let  $l,j \in \mathbb{Z}$ and $z \in \mathcal{D}_{\delta_{1}} \left(z_{l,j}\right)$, we have
$$  \left[  \frac{1}{\sqrt{2}}\left(1- \delta_{1} \right)  \right] \mathrm{Im}( z) < \mathrm{Im}(z_{l,j}) < \left[  \frac{1}{\sqrt{2}}\left(1- \delta_{1} \right)  \right]^{-1} \mathrm{Im}( z),
   $$
thanks to relation (\ref{eq:espeqs1}). Let  $\omega \in \mathcal{D}_{\delta} \left(z_{l,j}\right)$ and  $\xi \in \mathcal{D}_{\delta} \left(z\right)$, for  $0<\delta \leq \delta_{1}$, we have respectively
$$ |z-\omega|\leq |z-z_{l,j}|+|\omega-z_{l,j}|< 2\delta_{1}\mathrm{Im}( z_{l,j}) < s\mathrm{Im}( z) $$
and 
$$ |\xi-z_{l,j}|\leq |z-\xi|+|z-z_{l,j}|< \delta_{1}\mathrm{Im}( z)+\delta_{1}\mathrm{Im}( z_{l,j})  < s\mathrm{Im}( z_{l,j}). $$
We deduce respectively that
$$\mathcal{D}_{\delta} \left(z_{l,j}\right)  \subset \mathcal{D}_{s} \left(z\right) \hspace*{0.5cm}\textrm{and} \hspace*{0.5cm} \mathcal{D}_{\delta} \left(z\right)  \subset \mathcal{D}_{s} \left(z_{l,j}\right).    $$
Moreover, 
$$  |\mathcal{D}_{\delta}(z_{l,j})|_{\alpha} \sim |\mathcal{D}_{s}\left(z\right)|_{\alpha} \sim
|\mathcal{D}_{\delta}(z)|_{\alpha} \sim |\mathcal{D}_{s}(z_{l,j})|_{\alpha},   $$
thanks to relation (\ref{eq:eqas1}). It follows that
\begin{equation}\label{eq:inaaaqqay}
\widehat{\mu}_{\delta}(z_{l,j})= \frac{\mu(\mathcal{D}_{\delta}(z_{l,j}))}{| \mathcal{D}_{\delta}(z_{l,j})|_{\alpha}} \lesssim  \frac{\mu(\mathcal{D}_{s}\left(z\right))}{| \mathcal{D}_{s}\left(z\right)|_{\alpha}} =\widehat{\mu}_{s}(z)
 \end{equation}
and
\begin{equation}\label{eq:inaaaaqqqay}
\widehat{\mu}_{\delta}(z)= \frac{\mu(\mathcal{D}_{\delta}(z))}{| \mathcal{D}_{\delta}(z)|_{\alpha}} \lesssim  \frac{\mu(\mathcal{D}_{s}\left(z_{l,j}\right))}{| \mathcal{D}_{s}\left(z_{l,j}\right)|_{\alpha}} =\widehat{\mu}_{s}(z_{l,j}).
 \end{equation}

Since the disks  $\mathcal{D}_{s_{\delta}} \left(z_{l,j}\right)$ are pairwise disjoint, and each $\mathcal{D}_{s_{\delta}} \left(z_{l,j}\right)$ is contained in  the disk $\mathcal{D}_{\delta} \left(z_{l,j}\right)$, where $s_{\delta}$ is the constant defined in the relation (\ref{eq:uaa5n}), we have
\begin{align*}
\sum_{j \in \mathbb{Z}}\sum_{l \in \mathbb{Z}}  \Phi\left( |\widehat{\mu}_{\delta}(z_{l,j})|\right)2^{j\gamma (\alpha +2)} &\sim  \sum_{j \in \mathbb{Z}}\sum_{l \in \mathbb{Z}} \int_{\mathcal{D}_{s_{\delta}} \left(z_{l,j}\right)} \Phi\left( |\widehat{\mu}_{\delta}(z_{l,j})| \right) dV_{\alpha}(z) \\
&\lesssim  \sum_{j \in \mathbb{Z}}\sum_{l \in \mathbb{Z}} \int_{\mathcal{D}_{s_{\delta}} \left(z_{l,j}\right)} \Phi\left( |\widehat{\mu}_{s}(z)|\right) dV_{\alpha}(z) \\ 
&\lesssim   \int_{\mathbb{C}_{+}} \Phi\left( |\widehat{\mu}_{s}(z)| \right) dV_{\alpha}(z) 
\lesssim  1.
\end{align*}

$ii)$  Suppose that   $\{ \widehat{\mu}_{s}(z_{l,j}) \}_{l,j \in \mathbb{Z}} \in \ell_{\alpha}^{\Phi}(\mathbb{C}_{+})$, then  $\widehat{\mu}_{\delta} \in L^{\Phi}(\mathbb{C_{+}}, dV_{\alpha})$. We can also assume without loss of generality that  $\mu \not\equiv 0$ and $\left\|\{ \widehat{\mu}_{s}(z_{l,j}) \}\right\|_{\ell^{\Phi}_{\alpha}}^{lux}=1$. We have
\begin{align*}
\int_{\mathbb{C}_{+}}\Phi\left(|\widehat{\mu}_{\delta}(z)|\right)dV_{\alpha}(z)
&\lesssim \sum_{j \in \mathbb{Z}}\sum_{l \in \mathbb{Z}} \int_{\mathcal{D}_{\delta} \left(z_{l,j}\right)}  \Phi\left(|\widehat{\mu}_{\delta}(z)|\right)dV_{\alpha}(z) \\
&\lesssim \sum_{j \in \mathbb{Z}}\sum_{l \in \mathbb{Z}} \int_{\mathcal{D}_{\delta} \left(z_{l,j}\right)}  \Phi\left( |\widehat{\mu}_{s}(z_{l,j})| \right)dV_{\alpha}(z) \\
&\sim \sum_{j \in \mathbb{Z}}\sum_{l \in \mathbb{Z}}\Phi\left( |\widehat{\mu}_{s}(z_{l,j})| \right)2^{j\gamma (\alpha +2)}  
\lesssim 1.
\end{align*}
\end{proof}
 \section{Proof of main results.}

 \subsection{Proof of Theorem \ref{pro:mainqaaq6}.}
 
 \begin{lemma}\label{pro:mainqaaqaqp6}
 Let $\alpha>-1$,  $0<\delta<1$ and  $\Phi \in \mathscr{U} \cap \nabla_{2}$.  For  $F\in A_{\alpha}^{\Phi}(\mathbb{C}_{+})$, there exists a 
  sequence $\mu:=\{\mu_{l,j}\}_{l,j\in \mathbb{Z}}$ belonging to $\ell^{\Phi}_{\alpha}(\mathbb{C_{+}})$ such that
 $$  F(z) =c_{\alpha}\sum_{j\in \mathbb{Z}}\sum_{l\in \mathbb{Z}}\mu_{l,j}\left(  \frac{z-\overline{z_{l,j}}}{i} \right)^{-\alpha-2}2^{j\gamma (\alpha +2)},~~ \forall~z \in \mathbb{C}_{+}.
    $$
 Moreover,   there exists a constant $C:= C_{\Phi,\delta,\alpha}>0$ such that
 \begin{equation}\label{eq:inaqay}
 \|F\|_{A^{\Phi}_{\alpha}}^{lux} \leq C \left\|\mu\right\|_{\ell^{\Phi}_{\alpha}}^{lux}.
  \end{equation}
 \end{lemma}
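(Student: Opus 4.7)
The plan is to discretize the reproducing formula $F=\mathcal{P}_\alpha F$ from Lemma~\ref{pro:mainplaqzepama6} against the $\delta$-lattice $\{z_{l,j}\}$, and then correct the error by a Neumann series argument. First, I would partition $\mathbb{C}_+$ into essentially disjoint Borel pieces $\{R_{l,j}\}_{l,j\in\mathbb{Z}}$ with $z_{l,j}\in R_{l,j}\subset \mathcal{D}_\delta(z_{l,j})$ and $|R_{l,j}|_\alpha \sim 2^{j\gamma(\alpha+2)}$, produced by disjointifying the covering $\bigcup_{l,j}\mathcal{D}_\delta(z_{l,j})=\mathbb{C}_+$. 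Writing
$$F(z)=\sum_{j,l}\int_{R_{l,j}}K_\alpha(z,\omega)\,F(\omega)\,dV_\alpha(\omega)$$
motivates the discretization operator
$$\mathcal{S}F(z):= c_\alpha\sum_{j\in\mathbb{Z}}\sum_{l\in\mathbb{Z}} F(z_{l,j})\,K_\alpha(z,z_{l,j})\,2^{j\gamma(\alpha+2)},$$
where $c_\alpha$ is chosen so that $c_\alpha\,2^{j\gamma(\alpha+2)}$ matches $|R_{l,j}|_\alpha$ up to a universal factor.

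Next, I would estimate $(I-\mathcal{S})F$ pointwise by splitting each integrand as
$$K_\alpha(z,\omega)F(\omega)-K_\alpha(z,z_{l,j})F(z_{l,j})=K_\alpha(z,\omega)\bigl(F(\omega)-F(z_{l,j})\bigr)+F(z_{l,j})\bigl(K_\alpha(z,\omega)-K_\alpha(z,z_{l,j})\bigr),$$
and applying Proposition~\ref{pro:main1q8}(ii) (the analytic Lipschitz-type estimate) to both $F$ and the antiholomorphic factor $\omega\mapsto K_\alpha(z,\omega)$. After a standard symmetrization interchanging the roles of $z$ and $\omega$ (of the kind already used in the proof of Lemma~\ref{pro:mainqaqa6}), these two terms can be dominated pointwise by $\varepsilon(\delta)\,\mathcal{P}_\alpha^+(|F|)(z)$, with $\varepsilon(\delta)\to 0$ as $\delta\to 0$. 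Invoking the Orlicz boundedness of $\mathcal{P}_\alpha^+$ from Theorem~\ref{pro:mainplaqaaqaaqq6} (valid since $\Phi\in\mathscr{U}\cap\nabla_2$), this yields
$$\|(I-\mathcal{S})F\|_{A_\alpha^\Phi}^{lux}\leq C\,\varepsilon(\delta)\,\|F\|_{A_\alpha^\Phi}^{lux}.$$

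For $\delta$ small enough that $C\,\varepsilon(\delta)<1$, the operator $\mathcal{S}$ is invertible on $A_\alpha^\Phi(\mathbb{C}_+)$ by a Neumann series. Setting $G:=\mathcal{S}^{-1}F\in A_\alpha^\Phi(\mathbb{C}_+)$ and $\mu_{l,j}:=G(z_{l,j})$, the identity $F=\mathcal{S}G$ yields the desired representation
$$F(z)=c_\alpha\sum_{j\in\mathbb{Z}}\sum_{l\in\mathbb{Z}}\mu_{l,j}\left(\frac{z-\overline{z_{l,j}}}{i}\right)^{-\alpha-2}2^{j\gamma(\alpha+2)}.$$
The sampling bound of Lemma~\ref{pro:mainqaqa6} applied to $G$ gives $\|\mu\|_{\ell_\alpha^\Phi}^{lux}\lesssim \|G\|_{A_\alpha^\Phi}^{lux}\lesssim \|F\|_{A_\alpha^\Phi}^{lux}$, and the announced inequality $\|F\|_{A_\alpha^\Phi}^{lux}\leq C\|\mu\|_{\ell_\alpha^\Phi}^{lux}$ is then immediate from Lemma~\ref{pro:mainqaqaqq6}.

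The main obstacle will be the Orlicz-scale error estimate for $I-\mathcal{S}$. In the classical $L^p$ setting this is a direct Schur/H\"older computation, but for $\Phi\in\mathscr{U}\cap\nabla_2$ one must route the argument through the positive projection $\mathcal{P}_\alpha^+$: the local Lipschitz control from Proposition~\ref{pro:main1q8}(ii) is what extracts the small factor $\varepsilon(\delta)$, and the $\nabla_2$ hypothesis is precisely what makes $\mathcal{P}_\alpha^+$ bounded on $L^\Phi(\mathbb{C}_+,dV_\alpha)$ via Theorem~\ref{pro:mainplaqaaqaaqq6}.
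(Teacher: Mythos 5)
Your proposal is correct in outline, but it is a genuinely different route from the paper's. The paper does not discretize the reproducing formula at all: it defines the synthesis operator $T(\lambda)=2^{\alpha+2}\sum_{j,l}\lambda_{l,j}K_\alpha(\cdot,z_{l,j})2^{j\gamma(\alpha+2)}$, which is bounded from $\ell_\alpha^\Phi$ to $A_\alpha^\Phi$ by Lemma \ref{pro:mainqaqaqq6}, identifies its adjoint (via the dualities $(A_\alpha^\Phi)^*\cong A_\alpha^\Psi$ and $(\ell_\alpha^\Phi)^*\cong\ell_\alpha^\Psi$) with the sampling map $G\mapsto\{G(z_{l,j})\}$, and then invokes the lower sampling bound $\Vert G\Vert_{A_\alpha^\Psi}^{lux}\lesssim\Vert\{G(z_{l,j})\}\Vert_{\ell_\alpha^\Psi}^{lux}$ of Lemma \ref{pro:mainqaqa6} (for $\delta$ small) to conclude that $T^*$ is bounded below, hence $T$ is surjective. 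This is short given the machinery already assembled, but non-constructive: the representing sequence comes from the closed-range theorem. Your Coifman--Rochberg-type argument is constructive ($\mu_{l,j}$ is essentially $\mathcal{S}^{-1}F$ sampled at the lattice) and bypasses duality entirely, at the cost of carrying the entire technical weight in the operator-norm estimate for $I-\mathcal{S}$; note that the Neumann series is legitimate here because $\Phi\in\mathscr{U}$ is convex, so the Luxemburg norm is a genuine Banach norm on $A_\alpha^\Phi$.

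Two caveats on your sketch. First, Proposition \ref{pro:main1q8}(ii) bounds $\Phi(|F(z)-F(\omega)|)$, not $|F(z)-F(\omega)|$, so it cannot be summed linearly into a pointwise bound $|(I-\mathcal{S})F(z)|\le\varepsilon(\delta)\,\mathcal{P}_\alpha^+(|F|)(z)$; for that you should instead use Cauchy gradient estimates on $F$ and on $\omega\mapsto K_\alpha(z,\omega)$ over $\mathcal{D}_\delta(z_{l,j})$ together with the submean-value property of $|F|$ (to replace $\sup_{\mathcal{D}_\delta(z_{l,j})}|F|$ by an average over a slightly larger disk of bounded overlap), which is where the factor $\delta$ actually comes from. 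Second, since $|R_{l,j}|_\alpha$ only satisfies $|R_{l,j}|_\alpha\sim 2^{j\gamma(\alpha+2)}$, you must run the Neumann-series argument with the exact weights $|R_{l,j}|_\alpha$ and only at the end renormalize $\mu_{l,j}:=G(z_{l,j})|R_{l,j}|_\alpha/(c_\alpha 2^{j\gamma(\alpha+2)})$, using $\Phi\in\Delta_2$ to keep $\Vert\mu\Vert_{\ell_\alpha^\Phi}^{lux}$ comparable; if you force $c_\alpha 2^{j\gamma(\alpha+2)}$ into $\mathcal{S}$ from the start, $\mathcal{S}$ need not be close to the identity. With these repairs the argument goes through and yields the same conclusion for $\delta$ small enough.
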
 
 
 \begin{proof}
 Consider $T$, the map defined  by
 $$  T\left(\lambda\right) = 2^{\alpha+2}\sum_{j\in \mathbb{Z}}\sum_{l\in \mathbb{Z}}\lambda_{l,j}\left(  \frac{.-\overline{z_{l,j}}}{i} \right)^{-\alpha-2}2^{j\gamma (\alpha +2)},~~ \forall~\lambda:=\{\lambda_{l,j}\}_{l,j\in \mathbb{Z}}\in  \ell_{\alpha}^{\Phi}(\mathbb{C}_{+}).
   $$
 By construction, $T$ is well defined, linear and continuous from  $\ell_{\alpha}^{\Phi}(\mathbb{C}_{+})$ to $A_{\alpha}^{\Phi}(\mathbb{C}_{+})$, according to Lemma \ref{pro:mainqaqaqq6}.  It just remains to prove the surjectivity of $T$. 
 Consider  $T^{*}$, the map defined  by
 $$  T^{*}(F) =\{F(z_{l,j})\}_{l,j\in \mathbb{Z}},~~ \forall~F \in A_{\alpha}^{\Psi}(\mathbb{C}_{+}), 
   $$
 where  $\Psi$ is the complementary function of $\Phi$. By construction, $T^{*}$ is well defined, linear and continuous from  $A_{\alpha}^{\Psi}(\mathbb{C}_{+})$ to  $\ell_{\alpha}^{\Psi}(\mathbb{C}_{+})$, according to Lemma \ref{pro:mainqaqa6}. Moreover, 
 \begin{equation}\label{eq:ineqqqay}
 \langle  T(\lambda), G \rangle_{A_{\alpha}^{\Phi}, A_{\alpha}^{\Psi}}= 2^{\alpha+2}\sum_{j\in \mathbb{Z}}\sum_{l\in \mathbb{Z}}\lambda_{i,j}\overline{G(z_{l,j})}2^{j\gamma (\alpha +2)}=\langle  \lambda, T^{*}(G) \rangle_{\ell_{\alpha}^{\Phi}, \ell_{\alpha}^{\Psi}},
 \end{equation}
  for  $\lambda:=\{\lambda_{l,j}\}_{l,j\in \mathbb{Z}}\in  \ell_{\alpha}^{\Phi}(\mathbb{C}_{+})$ and $G \in A_{\alpha}^{\Psi}(\mathbb{C}_{+})$. It follows that,  $T^{*}$ is the adjoint of the operator  $T$. \\
 Let $G \in A_{\alpha}^{\Psi}(\mathbb{C}_{+})$. We obtain using Lemma \ref{pro:mainqaqa6} that
 $$  
 \|G\|_{A^{\Psi}_{\alpha}}^{lux}\lesssim  \left\|\{G(z_{l,j})\}_{l,j\in \mathbb{Z}}\right\|_{\ell^{\Psi}_{\alpha}}^{lux} =  \left\|T^{*}(G)\right\|_{\ell^{\Psi}_{\alpha}}^{lux}.   $$
 We deduce that,  $T$ is surjective from  $\ell_{\alpha}^{\Phi}(\mathbb{C}_{+})$ to $A_{\alpha}^{\Phi}(\mathbb{C}_{+})$. 
 \end{proof}

 \proof[Proof of Theorem \ref{pro:mainqaaq6}.]
 The proof follows from Lemma \ref{pro:mainqaqaqq6} and Lemma \ref{pro:mainqaaqaqp6}.
 \epf

 \subsection{Proof of Theorem \ref{pro:mainqaaqaq6}.}

 We recall that the Rademacher functions
 $r_{n}$ in $(0, 1]$ are defined as follows
 $$   r_{n}(t)= \textit{sgn}[\sin (2^n\pi t)], ~~n >0.  $$

 The following is obtained using Khintchine's double inequality (see \cite{defklaus}) and the ideas in \cite{sehba4}
 \begin{lemma}\label{pro:main2qapAaa}
 For  $\Phi \in \mathscr{L}\cup \mathscr{U}$, there
 exist constants $0 < A_{\Phi} \leq B_{\Phi} < \infty$ such that, for any sequence $\{x_{k,j}\} \in \ell^{2}$,  we have
 \begin{equation}\label{eqoaqbpmaq6}
 A_{\Phi}\left( \sum_{k,j}|x_{k,j}|^{2}   \right)^{1/2} \leq \Phi^{-1}\left(\int_0^1\int_{0}^{1}\Phi\left(\left|  \sum_{k,j}x_{k,j}r_{k}(t)r_j(s)  \right|\right)dtds \right)\leq B_{\Phi}\left( \sum_{k,j}|x_{k,j}|^{2}   \right)^{1/2}. \end{equation}
 \end{lemma}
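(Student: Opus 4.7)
The plan is to regard $\{r_k(t)r_j(s)\}_{k,j\in\mathbb{Z}}$ as a Rademacher-type system of independent $\pm 1$ random variables on the product probability space $([0,1]^2,dt\,ds)$. Setting $\sigma:=\bigl(\sum_{k,j}|x_{k,j}|^2\bigr)^{1/2}$ and $S(t,s):=\sum_{k,j}x_{k,j}r_k(t)r_j(s)$, the classical double Khintchine inequality (see \cite{defklaus}) applied to this Bernoulli family yields, for each $p\in(0,\infty)$, constants $A_p\le B_p$ such that
\begin{equation*}
A_p\,\sigma\le\Bigl(\int_0^1\!\!\int_0^1|S(t,s)|^p\,dt\,ds\Bigr)^{1/p}\le B_p\,\sigma.
\end{equation*}
The task is then to transfer from $L^p$ to $L^\Phi$ using the two-sided polynomial sandwich coming from the indices of $\Phi$.

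Since $\Phi\in\mathscr{L}\cup\mathscr{U}$ we have $0<a_\Phi\le b_\Phi<\infty$, and by the lemma on indices recalled in Section 2, both $t\mapsto\Phi(t)/t^{a_\Phi}$ (non-decreasing) and $t\mapsto\Phi(t)/t^{b_\Phi}$ (non-increasing) are monotone. Comparing these at $t$ and $\lambda t$ gives, for all $\lambda,t>0$,
\begin{equation*}
\min\{\lambda^{a_\Phi},\lambda^{b_\Phi}\}\,\Phi(t)\le\Phi(\lambda t)\le\max\{\lambda^{a_\Phi},\lambda^{b_\Phi}\}\,\Phi(t).
\end{equation*}
For the upper inequality of the lemma, I would apply this with $\lambda=|S(t,s)|/\sigma$ to obtain $\int\Phi(|S|)\le\Phi(\sigma)\int(|S/\sigma|^{a_\Phi}+|S/\sigma|^{b_\Phi})\le C_\Phi\Phi(\sigma)$ via the scalar Khintchine inequality at exponents $a_\Phi$ and $b_\Phi$. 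Then the lower polynomial bound $\Phi(K\sigma)\ge K^{a_\Phi}\Phi(\sigma)$ (for $K\ge 1$) lets one invert $\Phi$ to conclude $\Phi^{-1}(\int\Phi(|S|))\le C_\Phi^{1/a_\Phi}\sigma$, which is the desired inequality with $B_\Phi:=C_\Phi^{1/a_\Phi}$.

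For the lower inequality one cannot rely on Jensen, since $\Phi$ need not be convex when $\Phi\in\mathscr{L}$, so I would replace it by a Paley--Zygmund type argument. The scalar Khintchine bounds give $\int S^2=\sigma^2$ and $\int S^4\le B_4^4\sigma^4$, whence, with $X:=S^2$,
\begin{equation*}
\bigl|\{(t,s):|S(t,s)|\ge\sigma/\sqrt{2}\,\}\bigr|\ge\frac{\bigl(\tfrac{1}{2}\int X\bigr)^2}{\int X^2}\ge\frac{1}{4B_4^{4}}=:c>0.
\end{equation*}
Consequently $\int\Phi(|S|)\ge c\,\Phi(\sigma/\sqrt 2)\ge c\cdot 2^{-b_\Phi/2}\Phi(\sigma)=:c'\Phi(\sigma)$, and the upper polynomial bound $\Phi(A\sigma)\le A^{a_\Phi}\Phi(\sigma)$ for $0<A\le 1$ then gives $\Phi^{-1}(\int\Phi(|S|))\ge (c')^{1/a_\Phi}\sigma$, so one can take $A_\Phi:=(c')^{1/a_\Phi}$.

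The main obstacle is purely bookkeeping: the two monotonicities on $\Phi/t^{a_\Phi}$ and $\Phi/t^{b_\Phi}$ pull in opposite directions depending on whether the argument exceeds or falls below a reference level, so one must be careful to invoke the correct polynomial comparison on each side of the inversion of $\Phi$. The only genuinely non-trivial step beyond this is the Paley--Zygmund substitute for Jensen on the lower bound, which is why the classical Khintchine estimates are needed at both exponents $2$ and $4$. Once these ingredients are organized, the result follows from the scalar double Khintchine inequality of \cite{defklaus} exactly as in the scheme of \cite{sehba4}.
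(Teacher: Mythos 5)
Your argument is correct and follows exactly the route the paper indicates (the paper gives no proof of this lemma, only the citation to Khintchine's double inequality and \cite{sehba4}): the two-sided $L^p$ equivalence for the double Rademacher sum combined with the index bounds $\min\{\lambda^{a_\Phi},\lambda^{b_\Phi}\}\Phi(t)\le\Phi(\lambda t)\le\max\{\lambda^{a_\Phi},\lambda^{b_\Phi}\}\Phi(t)$, with Paley--Zygmund correctly substituting for Jensen in the lower bound when $\Phi$ is concave. One small slip: the family $\{r_k(t)r_j(s)\}$ is orthonormal but \emph{not} independent (it is a Rademacher chaos of order two), which is harmless here since you invoke the double Khintchine inequality rather than the scalar one, and the Paley--Zygmund step only uses $\int S^2=\sigma^2$ and the fourth-moment bound.
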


 \proof[Proof of Theorem \ref{pro:mainqaaqaq6}.]

 Let us prove the assertion $(ii) \Rightarrow (i)$. 
 
 \medskip
  
 Let $F\in A_{\alpha}^{\Phi_{1}}(\mathbb{C}_{+})$. Assume that $F\not=0$, because there is nothing to show when $F=0$. We can also assume without loss of generality that $\|F\|_{A^{\Phi_{1}}_{\alpha}}^{lux}=1$.
 Let us choose $\delta$ such that  $2(1+ \sqrt{2})\delta < 1$  and put $\delta':=2\delta\left[  \frac{1}{\sqrt{2}}\left(1- 2\delta \right)  \right]^{-1}$.  
 For $z \in \mathbb{C}_{+}$, we have 
 \begin{align*}
 \Phi_{2}\left(|F(z)|\right)
 &\lesssim \frac{1}{\mathrm{Im}( z)^{2+\alpha}} \int_{\mathcal{D}_{\delta}\left(z\right)}\Phi_{2}\left(|F(\omega)|\right)dV_{\alpha}(\omega) \\
 &= \int_{\mathbb{C}_{+}} \Phi_{2}\left(|F(\omega)|\right)\chi_{\mathcal{D}_{\delta}\left(z\right)}(\omega)\frac{dV_{\alpha}(\omega)}{\mathrm{Im}( z)^{2+\alpha}} \\
 &\sim \int_{\mathbb{C}_{+}} \Phi_{2}\left(|F(\omega)|\right)\frac{\chi_{\mathcal{D}_{\delta'}\left(\omega\right)}(z)}{| \mathcal{D}_{\delta'}(\omega)|_{\alpha}}dV_{\alpha}(\omega),
 \end{align*}
 thanks to Proposition \ref{pro:main1q8} and,  relations (\ref{eq:eqas1}) and (\ref{eq:eseqs1}). It follows that
 \begin{align*}
 \int_{\mathbb{C}_{+}}\Phi_{2}\left( |F(z) |\right)d\mu(z)&\lesssim \int_{\mathbb{C}_{+}} \left( \int_{\mathbb{C}_{+}} \Phi_{2}\left(|F(\omega)|\right)\frac{\chi_{\mathcal{D}_{\delta'}\left(\omega\right)}(z)}{| \mathcal{D}_{\delta'}(\omega)|_{\alpha}}dV_{\alpha}(\omega)  \right) d\mu(z) \\ 
 &= \int_{\mathbb{C}_{+}}\Phi_{2}\left(|F(\omega)|\right) \left(\frac{1}{| \mathcal{D}_{\delta'}(\omega)|_{\alpha}}  \int_{\mathbb{C}_{+}}\chi_{\mathcal{D}_{\delta'}\left(\omega\right)}(z)  d\mu(z) \right)dV_{\alpha}(\omega) \\
 &= \int_{\mathbb{C}_{+}}\Phi_{2}\left(|F(\omega)|\right) \widehat{\mu}_{\delta'}(\omega) dV_{\alpha}(\omega)\\
 &\lesssim \int_{\mathbb{C}_{+}}\Phi_{2}\left(|F(\omega)|\right) \widetilde{\mu}(\omega) dV_{\alpha}(\omega),
 \end{align*}
 thanks to Proposition \ref{pro:main1qaqaqp8}. 
 Since  $\widetilde{\mu}$ belongs to $L^{\Phi_{3}}(\mathbb{C}_{+}, dV_{\alpha})$, we have
 \begin{align*}
 \int_{\mathbb{C}_{+}}\Phi_{2}\left( |F(z) |\right)d\mu(z)&\lesssim \int_{\mathbb{C}_{+}}\Phi_{2}\left(|F(\omega)|\right)\widetilde{\mu}(\omega) dV_{\alpha}(\omega) \\ 
 &\lesssim \int_{\mathbb{C}_{+}}\Phi_{1}\circ\Phi_{2}^{-1}\left(\Phi_{2}\left(|F(\omega)|\right) \right)dV_{\alpha}(\omega)+\int_{\mathbb{C}_{+}}\Phi_{3}\left(\widetilde{\mu}(\omega) \right) dV_{\alpha}(\omega)   \\ 
 &= \int_{\mathbb{C}_{+}}\Phi_{1}\left(|F(\omega)|\right) dV_{\alpha}(\omega)+\int_{\mathbb{C}_{+}}\Phi_{3}\left(\widetilde{\mu}(\omega)\right) dV_{\alpha}(\omega)   
 \lesssim 1. 
 \end{align*}

 We are left to prove $(i) \Rightarrow (ii)$. 
 
 \medskip

 Let  $0<\delta < 1$ and $\nu := \{\nu_{l,j}\}_{l,j\in \mathbb{Z}} \in \ell_{\alpha}^{\Phi_{1}\circ\Phi_{2}^{-1}}(\mathbb{C}_{+})$. Suppose that $\nu \not=0$, because there is nothing to show when $\nu =0$. We can also without loss of generality assume that $\|\nu\|_{\ell_{\alpha}^{\Phi_{1}\circ\Phi_{2}^{-1}}}^{lux}=1$. For $l,j \in \mathbb{Z}$, put 
 $$  \lambda_{l,j}:= \Phi_{2}^{-1}\left( |\nu_{l,j}|\right).    $$
 By construction, the sequence $\lambda := \{\lambda_{l,j}\}_{l,j\in \mathbb{Z}} \in \ell_{\alpha}^{\Phi_{1}}(\mathbb{C}_{+})$. Moreover
 $ \|\lambda\|_{\ell_{\alpha}^{\Phi_{1}}}^{lux} = 1.   $
 For $0 <s,t<1$ and   $z \in \mathbb{C}_{+}$, put
 $$  F_{t,s}(z) =\sum_{j\in \mathbb{Z}}\sum_{l\in \mathbb{Z}} \lambda_{l,j}r_{l}(t)r_j(s)F_{l,j}(z),   $$
 where 
 $$  F_{l,j}(z)= 2^{\alpha+2}\left(  \frac{z-\overline{z_{l,j}}}{i} \right)^{-\alpha-2}2^{j\gamma (\alpha +2)}.   $$
 By construction,  $F_{t,s} \in A_{\alpha}^{\Phi_{1}}(\mathbb{C}_{+})$ and $  \|F_{t,s}\|_{A^{\Phi_{1}}_{\alpha}}^{lux}\lesssim\|\lambda\|^{lux}_{\ell_{\alpha}^{\Phi_{1}}},   $  
 according to Theorem \ref{pro:mainqaaq6}. Since  $A_{\alpha}^{\Phi_{1}}(\mathbb{C}_{+})$ continuously embeds into  $L^{\Phi_{2}}(\mathbb{C}_{+}, d\mu)$, we obtain the following using Kinchine's inequality (Lemma \ref{pro:main2qapAaa}) and Fubbini's theorem.
 \begin{align*}
 \int_{\mathbb{C}_{+}}\Phi_{2}\left(\left(\sum_{l,j\in \mathbb{Z}}|\lambda_{l,j}|^{2}|F_{l,j}(z)|^{2}\right)^{1/2}\right)d\mu(z) &\lesssim \int_{\mathbb{C}_{+}}\int_{0}^{1}\int_{0}^{1}\Phi_{2}\left(\left|\sum_{l,j\in \mathbb{Z}}\lambda_{l,j}r_{l}(t)r_j(s)F_{l,j}(z)\right|\right)dtdsd\mu(z) \\
 &\lesssim \int_{0}^{1}\int_{0}^{1}\int_{\mathbb{C}_{+}}\Phi_{2}\left( \frac{|F_{t,s}(z)|}{\|F_{t,s}\|_{A^{\Phi_{1}}_{\alpha}}^{lux}} \right)d\mu(z)dtds 
 \lesssim 1. 
 \end{align*}
  We summarize the above as follows.
 \begin{equation}\label{eqoaqmapmq6}
 \int_{\mathbb{C}_{+}}\Phi_{2}\left(\left(\sum_{l,j\in \mathbb{Z}}|\lambda_{l,j}|^{2}|F_{l,j}(z)|^{2}\right)^{1/2}\right)d\mu(z) \lesssim 1.  
 \end{equation}
 For $t\geq 0$, put
  $$  \widetilde{\Phi}_{2}(t)=\Phi_{2}\left(t^{1/a_{\Phi_{2}}}\right).      $$
 By construction, $\widetilde{\Phi}_{2}$ is a convex growth function. For $z \in \mathbb{C}_{+}$, we have:
 \begin{align*}
 \sum_{l,j\in \mathbb{Z}}\Phi_{2}\left(|\lambda_{l,j}|\right)\chi_{\mathcal{D}_{\delta}\left(z_{l,j}\right)}(z)&\sim\sum_{l,j\in \mathbb{Z}}\Phi_{2}\left(|\lambda_{l,j}|\times \frac{2^{j\gamma (\alpha +2)}}{|z-\overline{z_{l,j}}|^{2+\alpha}}\right)\chi_{\mathcal{D}_{\delta}\left(z_{l,j}\right)}(z)\\
 &\sim
 \sum_{l,j\in \mathbb{Z}}\widetilde{\Phi}_{2}\left(\left(|\lambda_{l,j}||F_{l,j}(z)|\chi_{\mathcal{D}_{\delta}\left(z_{l,j}\right)}(z)\right)^{a_{\Phi_{2}}}\right)\\
 &\lesssim \widetilde{\Phi}_{2}\left(\sum_{l,j\in \mathbb{Z}}\left(|\lambda_{l,j}||F_{l,j}(z)|\chi_{\mathcal{D}_{\delta}\left(z_{l,j}\right)}(z)\right)^{a_{\Phi_{2}}}\right).
 \end{align*}
If $a_{\Phi_{2}} \geq 1$ then we have 
\begin{align*}
\sum_{l,j\in \mathbb{Z}}\left(|\lambda_{l,j}||F_{l,j}(z)|\chi_{\mathcal{D}_{\delta}\left(z_{l,j}\right)}(z)\right)^{a_{\Phi_{2}}}&\lesssim \left(\sum_{l,j\in \mathbb{Z}}\left(|\lambda_{l,j}||F_{l,j}(z)|\right)^{2a_{\Phi_{2}}}\right)^{\frac{1}{2}}\left(\sum_{l,j\in \mathbb{Z}}\left(\chi_{\mathcal{D}_{\delta}\left(z_{l,j}\right)}(z)\right)^{2a_{\Phi_{2}}}\right)^{\frac{1}{2}} \\
&=N^{1/2} \left(\sum_{l,j\in \mathbb{Z}}\left(|\lambda_{l,j}||F_{l,j}(z)|\right)^{2a_{\Phi_{2}}}\right)^{\frac{1}{2}} \\
&\lesssim N^{1/2} \left(\sum_{l,j\in \mathbb{Z}}\left(|\lambda_{l,j}||F_{l,j}(z)|\right)^{2}\right)^{\frac{a_{\Phi_{2}}}{2}}, 
\end{align*}
since each  $z \in \mathbb{C}_{+}$ belongs to at most $N$ of the sets $\mathcal{D}_{\delta}\left(z_{l,j}\right)$.
If $a_{\Phi_{2}} \leq 1$ then we have
\begin{align*}
\sum_{l,j\in \mathbb{Z}}\left(|\lambda_{l,j}||F_{l,j}(z)|\chi_{\mathcal{D}_{\delta}\left(z_{l,j}\right)}(z)\right)^{a_{\Phi_{2}}}&\lesssim \left(\sum_{l,j\in \mathbb{Z}}\left(|\lambda_{l,j}||F_{l,j}(z)|\right)^{a_{\Phi_{2}}\times \frac{2}{a_{\Phi_{2}}}}\right)^{\frac{a_{\Phi_{2}}}{2}}\left(\sum_{l,j\in \mathbb{Z}}\left(\chi_{\mathcal{D}_{\delta}\left(z_{l,j}\right)}(z)\right)^{a_{\Phi_{2}}\times \frac{1}{1-\frac{a_{\Phi_{2}}}{2}}}\right)^{1-\frac{a_{\Phi_{2}}}{2}} \\
&\lesssim N^{\left(1-\frac{a_{\Phi_{2}}}{2}\right)} \left(\sum_{l,j\in \mathbb{Z}}\left(|\lambda_{l,j}||F_{l,j}(z)|\right)^{2}\right)^{\frac{a_{\Phi_{2}}}{2}}. 
\end{align*}
We deduce that   
$$  \sum_{l,j\in \mathbb{Z}}\left(|\lambda_{l,j}||F_{l,j}(z)|\chi_{\mathcal{D}_{\delta}\left(z_{l,j}\right)}(z)\right)^{a_{\Phi_{2}}}\lesssim \max\left\{ N^{1/2}; N^{\left(1-\frac{a_{\Phi_{2}}}{2}\right)}    \right\} \left(\sum_{l,j\in \mathbb{Z}}\left(|\lambda_{l,j}||F_{l,j}(z)|\right)^{2}\right)^{\frac{a_{\Phi_{2}}}{2}}. 
   $$
 It follows that
 \begin{equation}\label{eqoamq6}
 \sum_{l,j\in \mathbb{Z}}\Phi_{2}\left(|\lambda_{l,j}|\right)\chi_{\mathcal{D}_{\delta}\left(z_{l,j}\right)}(z) \lesssim \Phi_{2}\left(\left(\sum_{l,j\in \mathbb{Z}}|\lambda_{l,j}|^{2}|F_{l,j}(z)|^{2}\right)^{1/2}\right).
 \end{equation}
 From (\ref{eqoaqmapmq6}) and (\ref{eqoamq6}), we obtain
 \begin{align*}
 |\langle    \{\widehat{\mu}_{\delta}(z_{l,j})\}; \{\nu_{l,j}\} \rangle_{\alpha} |&=
 \left|\sum_{l,j\in \mathbb{Z}}\widehat{\mu}_{\delta}(z_{l,j})\overline{\nu_{l,j}}2^{j\gamma (\alpha +2)} \right|\\
 &\lesssim      \sum_{l,j\in \mathbb{Z}}\Phi_{2}\left(|\lambda_{l,j}|\right)\widehat{\mu}_{\delta}(z_{l,j})2^{j\gamma (\alpha +2)} \\
 &\sim \sum_{l,j\in \mathbb{Z}}\Phi_{2}\left(|\lambda_{l,j}|\right)\mu(\mathcal{D}_{\delta}\left(z_{l,j}\right)) \\
 &\lesssim \int_{\mathbb{C}_{+}}\sum_{l,j\in \mathbb{Z}}\Phi_{2}\left(|\lambda_{l,j}|\right)\chi_{\mathcal{D}_{\delta}\left(z_{l,j}\right)}(z)d\mu(z)\\
 &\lesssim \int_{\mathbb{C}_{+}}\Phi_{2}\left(\left(\sum_{l,j\in \mathbb{Z}}|\lambda_{l,j}|^{2}|F_{l,j}(z)|^{2}\right)^{1/2}\right)d\mu(z) 
 \lesssim 1.
 \end{align*}
 We deduce that,  the sequence $ \{\widehat{\mu}_{\delta}(z_{l,j})\}_{l,j\in \mathbb{Z}} \in \ell_{\alpha}^{\Phi_{3}}(\mathbb{C}_{+})$. It follows that   $\widehat{\mu}_{s} \in L^{\Phi_{3}}(\mathbb{C_{+}}, dV_{\alpha})$, for all  $0< s \leq \frac{\delta}{2(\delta+ \sqrt{2})}$,  according to the Lemma \ref{pro:main1qpapqp8}. We also conclude that $\widetilde{\mu} \in L^{\Phi_{3}}(\mathbb{C_{+}}, dV_{\alpha})$, thanks to Lemma \ref{pro:main1qppaq8}.
 \epf

 \bibliographystyle{plain}

\begin{thebibliography}{1}
  
  \bibitem{bansahsehba}
  	\textsc{J.S. Bansah and B.F. Sehba}, \emph{Boundeness of a family of Hilbert-Type operators and its Bergman-Type analogue}, lllinois Journal of Mathematics. Vol(\textbf{59}), (\textbf{2015}),  pp. 949-977.
  	
  	
  
  	
    
 \bibitem{bonamisehba}
    \textsc{A. Bonami and B. Sehba}, \emph{Hankel operators between Hardy-Orlicz spaces and products of holomorphic function}, Rev.U. Mat.Arg. Vol.\textbf{50}.(2009), 183-196.	
 
 \bibitem{bekollebonamitchoundja}  \textsc{D. Be\'koll\'e, A. Bonami and E. Tchoundja}, \emph{Atomic decomposition and weak factorization for Bergman-Orlicz spaces}, Colloq. Math. \textbf{160} (2020), no. 2, 223-245.
 
   
 
 \bibitem{Carleson1}
 \textsc{L. Carleson}, An interpolation problem for bounded analytic functions, Amer. Math.  Second Series,{\bf 76} (1962), 547-559.
 
 \bibitem{Carleson2}
 \textsc{L. Carleson}, An interpolation by bounded analytic functions and corona problem, Ann.. J. Math. {\bf 80} (1958), 921-930.
 
 
 \bibitem{CharpentierSehba}
 \textsc{S. Charpentier, B. F. Sehba}, Carleson measure theorems for large Hardy–Orlicz and Bergman Orlicz spaces. J Funct Spaces Appl. 2012;2012. 1–21. Article ID 792763, p. 21.
 
      
 \bibitem{shuchen}
  	\textsc{S. Chen}, \emph{Geometry of Orlicz Spaces}. Institute of mathematics, polish academy of Sciences, (\textbf{1996}).
 
 \bibitem{CW}
 \textsc{J. A. Cima, W. Wogen}, \emph{A Carleson measure theorem for the Bergman space on the unit ball of $\mathbb{C}^n$} J Oper Theor. 1982; \textbf{7} (1):157–165.
 
   
 
 \bibitem{defklaus}	\textsc{A. Defant, F. Klaus}, \emph{ Tensor norms and operator ideals}, North-Holland Math. Stud.,  \textbf{176} North-Holland Publishing Co., Amsterdam, 1993, xii+566 pp. ISBN: 0-444-89091-2.
 
   
  
         
         
 
 
 \bibitem{djesehb}
      \textsc{J.M. Tanoh Dje and B.F. Sehba}, \emph{Carleson embeddings for Hardy-Orlicz and Bergman-Orlicz spaces of the upper half-plane}. Funct. Approx. Comment. Math. 64 (2021), no. 2, 163–201.
      
 \bibitem{djesehaqb}
      \textsc{J.M. Tanoh Dje and B.F. Sehba}, \emph{Two-weight inequalities for multilinear maximal functions
      in Orlicz spaces}. Banach J. Math. Anal. (2023). https://doi.org/10.1007/s43037-023-00256-3
   
 
   
  \bibitem{djesehafeuto}
         \textsc{J.M. Tanoh Dje and  B.F. Sehba}, \emph{Carleson embeddings and pointwise multipliers between Hardy-Orlicz spaces and Bergman-Orlicz spaces of the upper half-plane.} arXiv:2309.05414v1[math.CA] (11 sep 2023).
         
         
 
 \bibitem{Pduren2}\textsc{P.L. Duren,}
  \emph{Theory of $H^{p}$ Spaces}, Academic Press, (\textbf{1970}).
 
 \bibitem{Hastings}
 \textsc{ W. Hastings}, A Carleson measure theorem for Bergman spaces. Proc Amer Math Soc. 1975;52:237–241.
  
  	    
  
  \bibitem{gonessa}\textsc{J. Gonessa}, 
   \emph{Espaces de types Bergman dans les domaines homog\`enes de Siegel de type II: D\'ecomposition atomique et interpolation}. Thesis.
   
  
  \bibitem{kokokrbec}
    \textsc{V. Kokilashvili, M. Krbec}, \emph{Weighted Inequalities in Lorentz and Orlicz spaces},World Scientific publishing. C.O.Pte.Ltd (\textbf{1991}).
    
    	
  	
 
 \bibitem{Luecking1}
 \textsc{D. Luecking}, Embedding theorems for spaces of analytic functions via Khintchine’s inequality. Michigan Math J. 1993;40(2):333–358.
 
  \bibitem{Luecking2}
  \textsc{D. Luecking}, A technique for characterizing Carleson measures on Bergman spaces. Proc. Amer. Math. Soc. 1983;87:656–660.
 
  \bibitem{Luecking3}
 \textsc{D. Luecking}, Forward and reverse Carleson inequalities for functions in Bergman spaces and their derivatives. Amer J Math. 1985;107(1):85–111.
 
 \bibitem{Luecking4}
 \textsc{D. Luecking}, Multipliers of Bergman spaces into Lebesguespaces.Proc. Edingburgh Math. Soc. 1986;29(2):125–131.
    
       
  \bibitem{pauzhao} \textsc{J. Pau and R. Zhao}, \emph{Weak factorization and Hankel forms for weighted Bergman spaces on the unit ball}, Math. Ann. {\bf 363} (2015), no. 1-2, 363-383. 
    
 
 \bibitem{Power}
 \textsc{S. C. Power}, \emph{H\"ormander's Carleson theorem for the ball}, Glasgow Math. J., {\bf 26}, no. 1, 13--17, 1985.
 
    
    
 	 
  \bibitem{raoren}
  	\textsc{M.M. Rao and Z.D. Ren}, \emph{Theory of Orlicz Spaces}, Marcel Dekker,INC, \textbf{270}.(\textbf{1991}).
  		
  	
    
  	
  \bibitem{sehba4}
   	\textsc{B.F. Sehba}, \emph{Carleson embedding with loss for Bergman-Orlicz spaces of the unit ball}, arxiv: 2004.01437v1 [math. CA] (3 Apr 2020)
   	
   \bibitem{sehba5}
 \textsc{B. F. Sehba}, $\Phi$-Carleson measures and multipliers between Bergman–Orlicz spaces of the unit ball of Cn. J Aust Math Soc. 2018;{\bf 104} (1):63–79.
 	
  	
  	
  	 
    
    \bibitem{sehbaedgc}
      \textsc{B.F. Sehba and E. Tchoundja}, \emph{Duality for large Bergman-Orlicz spaces Boundedness of Hankel operators},Complex variables and Elliptic Equations, Vol(\textbf{62}), \textbf{February 2017}.
      
    
 	  
  \bibitem{Jan}\textsc{J. Szajkowski,}
   \emph{Modular spaces of analytic functions in the half-plane. I}. Functiones et Approximatio * XIII* 1982* UAM.  pp. 39-53.
   
  \bibitem{Jan1}\textsc{J. Szajkowski,}
   \emph{Modular spaces of analytic functions in the half-plane. II}. Functiones et Approximatio * XIII* 1982* UAM.  pp. 55-76.
 
   \bibitem{tchoundja}\textsc{E. Tchoundja, R. Zhao}\emph{Weak factorization and Hankel forms for Bergman-Orlicz spaces on the unit ball} Integr. Equat. Oper. Theory 91 (2019), no. {\bf 2}, Paper No. 16, 17 pp.
 
   \bibitem{Ueki}
 \textsc{S. Ueki}, \emph{Weighted composition operators between weighted Bergman spaces in the unit ball of $\mathbb C^n$}, Nihonkai Math. J., {\bf 16}, no. 1, 31-48, 2005.
 
 
   \bibitem{Videnskii}
 \textsc{I. V. Power}, \emph{An analogue of Carleson measures (Russian)}, Dokl. Akad. Nauk SSSR298(1988), no.5, 1042–1047; translation in Soviet Math. Dokl.{\bf 37} (1988), no.1, 186-190.
 
 \bibitem{zhu}\textsc{Kehe Zhu}, \emph{ Spaces of holomorphic functions in the unit ball}, Grad. Texts in Math., 226 Springer-Verlag, New York, 2005, x+271 pp. ISBN: 0-387-22036-4.
    \end{thebibliography}
  
 \end{document}